\numberwithin{equation}{section}
\theoremstyle{plain}
\newtheorem{thm}{\protect\theoremname}
\newtheorem{thm}{\protect\theoremname}[section]
\theoremstyle{remark}
\newtheorem{rem}[thm]{\protect\remarkname}
\theoremstyle{definition}
\newtheorem{defn}[thm]{\protect\definitionname}
\theoremstyle{plain}
\newtheorem{lem}[thm]{\protect\lemmaname}
\newenvironment{proof}[1][\protect\proofname]{\par
\normalfont\topsep6\p@\@plus6\p@\relax
\trivlist
\itemindent\parindent
\item[\hskip\labelsep\scshape #1]\ignorespaces
}{%
\endtrivlist\@endpefalse
}
\providecommand{\proofname}{Proof}
\theoremstyle{plain}
\newtheorem{cor}[thm]{\protect\corollaryname}
\theoremstyle{plain}
\newtheorem{prop}[thm]{\protect\propositionname}
\providecommand{\corollaryname}{Corollary}
\providecommand{\definitionname}{Definition}
\providecommand{\lemmaname}{Lemma}
\providecommand{\propositionname}{Proposition}
\providecommand{\remarkname}{Remark}
\providecommand{\theoremname}{Theorem}
\begin{document}

\title{Branching Particle Systems with Mutually Catalytic
Interactions}
	\author[A. Jamchi Fugenfirov]{Alexandra Jamchi Fugenfirov}
	\address[A. Jamchi Fugenfirov]{{\tt Faculty of Data and Decision Sciences,
Technion --- Israel Institute of Technology,
Haifa 3200003, Israel}}
	\email{{\tt f8.sasha@gmail.com}}
	\author[L. Mytnik]{Leonid Mytnik}
	\address[L. Mytnik]{{\tt Faculty of Data and Decision Sciences,
Technion --- Israel Institute of Technology,
Haifa 3200003, Israel}}
	\email{{\tt leonidm@technion.ac.il}}

\date{October 2023}
\keywords{Mutually Catalytic Branching, Coexistence, Finite System Scheme}
\subjclass[2000]{Primary: 60J80; Secondary: 60J85}

\cleardoublepage{}

\maketitle

\begin{abstract}

We study a continuous time Mutually Catalytic Branching model on the
$\mathbb{Z}^{d}$. The model describes the behavior of two different
populations of particles, performing random walk on the lattice in
the presence of branching, that is, each particle dies at a certain
rate and is replaced by a random number of offspring. The branching
rate of a particle in one population is proportional to the number
of particles of another population at the same site. We study the
long time behavior for this model, in particular, coexistence and
non-coexistence of two populations in the long run. Finally, we construct
a sequence of renormalized processes and use duality techniques to
investigate its limiting behavior. 

\end{abstract}



\section{Introduction}\label{sec:Introduction}

\subsection{Background and motivation}\label{sub:Background-and-Motivation}

In the last four decades there has been a lot of interest in spatial
branching models. These models include branching random walks, branching
Brownian motion, super-processes and so on. During the last three decades
branching models with interactions were studied very extensively on
the level of continuous state models and particle models. Below we
will give a partial list of branching models with interactions that
were studied in the literature. 

Models with catalytic branching, where one population catalyzes another
were studied in~\cite{DawsonFleischmann00}, \cite{LiMa08}, \cite{DawsonFleischmannMueller00}.
For measure-valued diffusions with mutual catalytic branching, see
Dawson-Perkins \cite{DawsonPerkins98}, Dawson et al.~\cite{DawsonAtEl02,MR1959845,EJP114}.
Models with symbiotic branching --- these are models with a correlation
in branching laws of two populations ---  were  investigated in~\cite{EtheridgeFleischmann98}, \cite{BlathDoringEtheridge11}, \cite{bib:hov18},  \cite{bib:bo21}, 
\cite{bib:bho16a}, \cite{bib:gm23}, \cite{bib:AM25} among others. 
Infinite rate branching models was introduced in \cite{KlenkeMytnik10},
\cite{KlenkeOeler10} and studied later in~\cite{bib:km12a}, \cite{bib:dkm17}, \cite{bib:dm12a}, \cite{bib:dm:13a}, \cite{bib:km20}. In addition, various particle
models were introduced in~\cite{Birkner2003}, \cite{KestenSidoravicius03}. 

Let us say a few words about mutually catalytic branching model in
the continuous state setting introduced in \cite{DawsonPerkins98}.


Dawson and Perkins in \cite{DawsonPerkins98} constructed the model
with $\mathbb{Z^{\textrm{\ensuremath{d}}}}$ being a space of sites
and $(u,v)\in\mathbb{R}_{+}^{\mathbb{Z}^{d}}\times\mathbb{R}_{+}^{\mathbb{Z}^{d}}$
a pair which undergo random migration and continuous state mutually
catalytic branching. The random migration is described by a $\mathbb{Z}^{d}$-valued
Markov chain with the associated $Q$-matrix, $\mbox{\ensuremath{Q=(q_{ij})}}$, subject to certain   technical conditions on $Q$ ad associated transition probabilities (see page~1090 
in~\cite{DawsonPerkins98}, and in particular ($H_0$), ($H_1$), ($H_2$) there).  
The branching rate of one population at a site is proportional to
the mass of the other population at the site. The system is modeled
by the following infinite system of stochastic differential equations:
\begin{equation}
\begin{cases}
\begin{array}{c}
u_{t}(x)=u_{0}(x)+\intop_{0}^{t}u_{s}Q(x)ds+\intop_{0}^{t}\sqrt{\widetilde{\gamma}u_{s}(x)v_{s}(x)}dB_{s}^{x},\,\,\,\, t\geq0,\,\, x\in\mathbb{Z}^{d},\\
v_{t}(x)=v_{0}(x)+\intop_{0}^{t}v_{s}Q(x)ds+\intop_{0}^{t}\sqrt{\widetilde{\gamma}u_{s}(x)v_{s}(x)}dW_{s}^{x},\,\,\,\, t\geq0,\,\, x\in\mathbb{Z}^{d},
\end{array}\end{cases}\label{eq:DP-eq}
\end{equation}
where $\{B_{s}^{x}\}_{x\in\mathbb{Z}^{d}},\,\{W_{s}^{x}\}_{x\in\mathbb{Z}^{d}}$
are collections of one-dimensional independent Brownian motions, and
$\tilde{\gamma}>0$.

One of the main questions which was introduced in \cite{DawsonPerkins98}
is the question of coexistence and non-coexistence of types in the
long run. In particular, it has been proved that there is a
clear dichotomy: coexistence is possible if the migration is transient, that is in dimensions $d\geq 3$, and is impossible if the migration is recurrent that is, if  $d\leq 2$.

 The above model is a particular case of so called  ``interacting mutually catalytic diffusions'' studied by many authors in different settings, see for example,
\cite{Mytnik98}, \cite{EJP114}, 
\cite{CoxKlenkePerkins00} among many others.


Cox, Dawson, Greven in \cite{CDG04} analyze the behavior of the Dawson-Perkins
system with very large but finite space of sites in comparison to
the corresponding model with infinite space of sites. This type of
question arises if, for example, one is interested in determining
what simulations of finite systems can say about the corresponding
infinite spatial system. In \cite{CDG04}, the authors consider a
sequence of finite subsets of $\mathbb{Z}^{d}$ increasing to the
whole $\mathbb{Z}^{d}$ and check the limiting behavior of mutually
catalytic models restricted to these sets, while time is also suitably
rescaled. It is called a ``finite system scheme''. This concept
appeared in \cite{CG1}, \cite{CGSh98a},  \cite{CGSh98b}. 

To formulate a result from \cite{CDG04} we need to introduce the
following construction. Fix $n\in\mathbb{N}$. Define $\Lambda_{n}=\left[-n,n\right]^{d}\cap\mathbb{Z}^{d}$.
Let $Q=\left(q(i,j)\right)_{i,j\in\mathbb{Z}^{d}}$ be the $Q$-matrix
of $\mathbb{Z}^{d}$-valued Markov chain. Define $Q^{n}=\left(q^{n}(i,j)\right)_{i,j\in\Lambda_{n}}$
as follows
\begin{equation}
q^{n}(0,y-x)=\sum_{z\in I(y-x)}q(0,z),\label{eq:transition_kernel}
\end{equation}
where
\[
I(x)=\left\{ y\in\mathbb{Z}^{d}\left|y=x\,\mod\,(\Lambda_{n})\right.\right\} .
\]
Consider a process that solves Dawson-Perkins equations \eqref{eq:DP-eq}
with state space being the torus $\Lambda_{n}$, i.e.
\begin{equation}
\begin{cases}
\begin{array}{c}
u_{t}^{n}(x)=u_{0}^{n}(x)+\intop_{0}^{t}u_{s}^{n}Q^{n}(x)ds+\intop_{0}^{t}\sqrt{\widetilde{\gamma}u_{s}^{n}(x)v_{s}^{n}(x)}dB_{s}^{x},\,\,\,\, t\geq0,\,\, x\in\Lambda_{n},\\
v_{t}^{n}(x)=v_{0}^{n}(x)+\intop_{0}^{t}v_{s}^{n}Q^{n}(x)ds+\intop_{0}^{t}\sqrt{\widetilde{\gamma}u_{s}^{n}(x)v_{s}^{n}(x)}dW_{s}^{x},\,\,\,\, t\geq0,\,\, x\in\Lambda_{n},
\end{array}\end{cases}\label{eq:DP-Lambda_n}
\end{equation}
where $\{B_{s}^{x}\}_{x\in\Lambda_{n}},\,\{W_{s}^{x}\}_{x\in\Lambda_{n}}$
are collections of one-dimensional independent Brownian motions, and
$\tilde{\gamma}>0$.

We denote such a process by $\left(U_{t}^{n},V_{t}^{n}\right)_{t\geq0}:=\left(\left(u_{t}^{n}(x)\right)_{x\in\Lambda_{n}},\left(v_{t}^{n}(x)\right)_{x\in\Lambda_{n}}\right)_{t\geq0}$.
Define time scaling depending on the system size
\[
\beta_{n}(t)=\left|\Lambda_{n}\right|t=(2n+1)^{d}t.
\]
Define:
\begin{equation}
\mathbf{U}_{t}^{n}=\sum_{x\in\Lambda_{n}}u_{t}^{n}(x),\,\,\,\,\,\,\mathbf{V}_{t}^{n}=\sum_{x\in\Lambda_{n}}v_{t}^{n}(x),\label{eq:sumU}
\end{equation}
and renormalized process
\begin{equation}
D_{n}((U_{\cdot}^{n},V_{\cdot}^{n}))=(D_{n}^{1},D_{n}^{2})=\frac{1}{\left|\Lambda_{n}\right|}\left(\sum_{x\in\Lambda_{n}}u_{\cdot}^{n}(x),\sum_{x\in\Lambda_{n}}v_{\cdot}^{n}(x)\right).\label{eq:D(U,V)}
\end{equation}
In what follows $\mathcal{L}(\cdot)$ denotes the law of random variable
or process.

It is shown in \cite{CDG04} that in dimensions $d\geq 3$  the sequence $D_{n}$-processes
with suitably rescaled time is tight and converges to a diffusion.
\begin{thm}
\label{thm:CDG-1}(Theorem 1(a) in \cite{CDG04}) Let $d\geq3$, and
let $Q$ be a generator of a simple random walk on $\mathbb{Z}^{d}$.
Assume that 
\[
u_{0}^{n}(x)=\theta_{1},v_{0}^{n}(x)=\theta_{2},\,\,\forall x\in\Lambda_{n}.
\]
Then 

\[
\mathcal{L}\left(D_{n}\left(U_{\beta_{n}(t)}^{n},V_{\beta_{n}(t)}^{n}\right)_{t\geq0}\right)\longrightarrow\mathcal{L}\left(\left(X_{t},Y_{t}\right)_{t\geq0}\right),\,\, as\, n\rightarrow\infty,
\]
where $\left(X_{t},Y_{t}\right)_{t\geq0}$ is the unique weak solution
for the following system of stochastic differential equations
\begin{equation}
\begin{cases}
dX_{t} & =\sqrt{\tilde{\gamma}X_{t}Y_{t}}dw^{1}(t),\,\,\, t\geq0,\\
dY_{t} & =\sqrt{\tilde{\gamma}X_{t}Y_{t}}dw^{2}(t),\,\,\, t\geq0
\end{cases}\label{eq:X_Y_law-1}
\end{equation}
with initial conditions $(X_{0},Y_{0})=\bar{\theta}=(\theta_1,\theta_2)$, where $w^{1},\, w^{2}$
are two independent standard Brownian motions.
\end{thm}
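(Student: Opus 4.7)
The plan is to set up a martingale problem for the rescaled averages, establish tightness, perform a two-scale averaging argument that replaces the local branching integrand by a function of the macroscopic averages, and finally invoke weak uniqueness for \eqref{eq:X_Y_law-1} to identify the limit.

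Write $\bar u^n(t)=D_n^1(\beta_n(t))$ and $\bar v^n(t)=D_n^2(\beta_n(t))$. Summing \eqref{eq:DP-Lambda_n} over $x\in\Lambda_n$ and using that the rows of $Q^n$ sum to zero (so that the migration drift vanishes in the total), the totals $\mathbf{U}^n,\mathbf{V}^n$ are continuous $L^2$-martingales, orthogonal to each other because the driving Brownian motions are independent, and with common quadratic variation $\tilde\gamma\int_0^t\sum_{x\in\Lambda_n} u^n_s(x)v^n_s(x)\,ds$. Normalizing by $|\Lambda_n|$ and performing the time change $s\mapsto\beta_n(s)=|\Lambda_n|s$ produces continuous martingales $M^{n,1}_t=\bar u^n(t)-\theta_1$, $M^{n,2}_t=\bar v^n(t)-\theta_2$ with
\[
\langle M^{n,1}\rangle_t=\langle M^{n,2}\rangle_t=\tilde\gamma\int_0^t|\Lambda_n|^{-1}\sum_{x\in\Lambda_n}u^n_{\beta_n(s)}(x)v^n_{\beta_n(s)}(x)\,ds,\qquad\langle M^{n,1},M^{n,2}\rangle\equiv 0.
\]
Since the target SDE \eqref{eq:X_Y_law-1} has joint quadratic variation $\tilde\gamma\int_0^t X_sY_s\,ds$, the central analytic task is the \emph{replacement identity}
\[
|\Lambda_n|^{-1}\sum_{x\in\Lambda_n}u^n_{\beta_n(s)}(x)v^n_{\beta_n(s)}(x)\;-\;\bar u^n(s)\bar v^n(s)\longrightarrow 0
\]
in integrated $L^1$-sense. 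Tightness of $(\bar u^n,\bar v^n)_{n\geq 1}$ in $C([0,T];\mathbb{R}_+^2)$ is routine once one has a uniform-in-$n$ second-moment bound for $\bar u^n,\bar v^n$: the identity $E[\bar u^n(t)]=\theta_1$, $E[\bar v^n(t)]=\theta_2$ combined with the elementary bound $uv\leq\tfrac12(u^2+v^2)$ controls $E[\langle M^{n,i}\rangle_t]$ via Gronwall, after which Aldous' criterion follows from BDG.

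The main obstacle is the replacement identity, and this is the step where the assumption $d\geq 3$ enters essentially. The heuristic is the finite-system-scheme paradigm: on the $O(1)$ microscopic scale the torus system relaxes locally towards the translation-invariant coexistence equilibria $\nu_{\alpha,\beta}$ of the infinite-lattice Dawson--Perkins system on $\mathbb{Z}^d$ with mean densities $(\alpha,\beta)$. Such equilibria exist precisely because transience of $Q$ is equivalent to $d\geq 3$, and the coexistence theorem of \cite{DawsonPerkins98} gives the decorrelation $\int u(0)v(0)\,d\nu_{\alpha,\beta}=\alpha\beta$. On the slow macroscopic scale $\beta_n(\cdot)$ the parameters of the local equilibrium are frozen to $(\bar u^n(s),\bar v^n(s))$, which themselves evolve on the slow scale. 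Making this rigorous requires (i) coupling the torus system to the infinite-lattice system started from the constant profile with the same totals and controlling the coupling error on an intermediate scale $1\ll T_n\ll|\Lambda_n|$, and (ii) a spatial ergodic theorem for $\nu_{\alpha,\beta}$ applied to the bilinear functional $(u,v)\mapsto u(0)v(0)$ together with a continuity estimate in $(\alpha,\beta)$. The coupling step (i) is the hardest, since it asks that the finite-torus correlation structure agrees with the $\mathbb{Z}^d$ one up to time $\beta_n(s)$, a point where the transience of $Q$ is used to bound the Green-function contributions appearing in the second-moment calculus of the difference process.

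Granted the replacement identity, each subsequential limit $(X,Y)$ of $(\bar u^n,\bar v^n)$ is a continuous process such that $(X-\theta_1,Y-\theta_2)$ are orthogonal continuous martingales with quadratic variations $\tilde\gamma\int_0^t X_sY_s\,ds$, so by the continuous martingale representation theorem $(X,Y)$ is a weak solution of \eqref{eq:X_Y_law-1}. Weak uniqueness for \eqref{eq:X_Y_law-1}, proved via the self-duality of the mutually catalytic SDE in \cite{Mytnik98}, then promotes subsequential convergence to convergence of the entire sequence to the asserted law.
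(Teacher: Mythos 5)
This statement is not proved in the paper at all: it is Theorem~1(a) of \cite{CDG04}, quoted verbatim and used later as a black-box input (via the self-duality Lemma~\ref{lem:Self-duality}) in the proof of Proposition~\ref{prop:Laplace-Fourier}. So there is no internal proof to compare against; what you have written is a reconstruction of the argument from the cited memoir.

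As a reconstruction, your outline is faithful to the standard finite-system-scheme strategy: the martingale/quadratic-variation bookkeeping for the rescaled totals is correct (the migration term cancels, $\langle M^{n,1},M^{n,2}\rangle\equiv 0$ by independence of the $B^x$ and $W^x$, and the time change produces exactly the stated bracket), the identification of the limit via the martingale problem plus weak uniqueness for \eqref{eq:X_Y_law-1} is sound, and you correctly locate where $d\geq 3$ enters, namely in the replacement of $|\Lambda_n|^{-1}\sum_x u^n v^n$ by $\bar u^n\bar v^n$ through local equilibration towards the coexistence equilibria $\nu_{\alpha,\beta}$ of \cite{DawsonPerkins98}. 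For tightness, note that you do not even need Gronwall: for constant initial data the orthogonality of the driving noises gives $\mathbb{E}[u^n_s(x)v^n_s(x)]=\theta_1\theta_2$ exactly (the continuous-state analogue of Corollary~\ref{cor:E(xi*eta)=00003DE(xi)E(eta)}), so $\mathbb{E}[\langle M^{n,i}\rangle_t]=\tilde\gamma\theta_1\theta_2 t$ uniformly in $n$.

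The genuine gap is that the replacement identity, which you yourself call the main obstacle, is essentially the entire content of the theorem, and your items (i) and (ii) describe a program rather than execute it. In particular, (i) requires showing that at the macroscopic times $\beta_n(s)$ the torus configuration is locally close to $\nu_{(\bar u^n(s),\bar v^n(s))}$; this needs a quantitative coupling on an intermediate scale $1\ll T_n\ll|\Lambda_n|$, control of how slowly the density process varies over such windows, and uniform moment and continuity estimates in the equilibrium parameters $(\alpha,\beta)$ --- none of which is supplied. In \cite{CDG04} this occupies the bulk of the memoir and is carried out with heavy use of the self-duality $\mathbb{E}H(u_t,v_t,\tilde u_0,\tilde v_0)=\mathbb{E}H(u_0,v_0,\tilde u_t,\tilde v_t)$ to compute mixed Laplace--Fourier transforms, a tool your sketch does not invoke. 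So the proposal is a correct road map consistent with the cited proof, but not a proof.
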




In this paper we consider the Dawson-Perkins mutually catalytic model
for {\it particle systems} and study its properties. As for other particle
models in the presence of interactions, they have been considered
earlier by many authors. A partial list of examples follows.

M. Birkner in \cite{Birkner2003} studies a system of particles performing
random walks on $\mathbb{Z}^{d}$ and performing branching; the rate of
branching of any particle on a site depends on the number of other
particles at the same site (this is the ``catalytic'' effect). Birkner
introduces a formal construction of such processes, via solutions
of certain stochastic equations, proves existence and uniqueness theorem
for these equations, and studies properties of the processes. Under
suitable assumptions he proves the existence of an equilibrium distribution
for shift-invariant initial conditions. He also studies survival and
extinction of the process in the long run. Note that the construction
of the process in \cite{Birkner2003} is motivated by the construction
of Ligget-Spitzer in \cite{LiggetSpitzer}.

Among many other works where branching particle systems with catalysts
were studied we can mention \cite{KestenSidoravicius03} and \cite{LiMa08}.
For example, Kesten and Sidoravicius in \cite{KestenSidoravicius03}
investigate the survival/extinction of two particle populations A
and B. Both populations perform an independent random walk. The B-particles
perform a branching random walk, but with a birth rate of new particles
which is proportional to the number of A-particles which coincide
with the appropriate B-particles. It is shown that for a choice of
parameters the system becomes (locally) extinct in all dimensions.

In \cite{LiMa08} catalytic discrete state branching processes with
immigration are defined as strong solutions of stochastic integral
equations. Z. Li and C.~Ma in \cite{LiMa08} prove limit theorems
for these processes.

In this paper we consider two interactive populations --- to be more precise we construct so-called mutually-catalytic branching model and study its long time behavior and   finite systems  scheme.

\subsection{Paper overview\label{sub:Thesis-overview}}

In the next two subsections we introduce our model and state main
results. In Section~\ref{sec:Model_MR} the process is formally constructed and main results are stated.
Sections~\ref{sec:ex_un}---\ref{chap:Finite-System-Scheme} are devoted to the proofs of our results. 

\medskip
\noindent\textbf{Acknowledgements.}
LM is supported in part by ISF grants No. 1704/18 and 1985/22.

\section{\label{sec:Model_MR}Our Model and Main Results}

\subsection{\label{sec:Model}Description of the Model}

Let us define the following interactive particle system. We consider
two populations on a countable set of sites $S\subset\mathbb{Z}^{d}$,
where particles in both populations move as independent Markov chains
on $S$ with rate of jumps $\kappa>0$, and symmetric transition jump probabilities
\begin{equation}
\label{eq:sym_p}
p_{x,y}=p_{y,x},\,\,\, x,y\in S.
\end{equation}
They also undergo branching events. In order to define our model formally
we are following the ideas of \cite{Birkner2003}.

Let $\left\{ \nu_{k}\right\} _{k\geq0}$ be the branching law. Suppose
that $Z$ is a random variable distributed according to $\nu$. We
assume that branching law is critical and has a finite variance: 
\begin{equation}
\mathbb{E}(Z)=\sum_{k\geq0}k\nu_{k}=1,\,\,\, Var(Z)=\sum_{k\geq0}(k-1)^{2}\nu_{k}=\sigma^{2}<\infty.\label{eq:variance-assumptions}
\end{equation}
The pair of processes $(\xi,\eta)$ describes the time evolution of
the following ``particle'' model. Between branching events in $\xi$
and $\eta$ populations move as independent Markov chains on $S$
with rate of jumps $\kappa$ and transition probabilities $p_{xy}$,
$x,y\in S$. Fix some $\gamma>0$. The ``infinitesimal'' rate of
a branching event for a particle from population $\xi$ at site $x$
at time $t$ equals to $\gamma\eta_{t}(x)$; similarly the ``infinitesimal''
rate of a branching event for a particle from population $\eta$ at
site $x$ at time $t$ equals to $\gamma\xi_{t}(x)$. When a ``branching
event'' occurs, a particle dies and is replaced by a random number
of offspring, distributed according to the law $\left\{ \nu_{k}\right\} _{k\geq0}$,
independently from the history of the process. To define a process
formally, as a solution to a system of equations we need more notations. Note that construction of the process follows the steps in~\cite{Birkner2003}.

The Markov chain is defined in the following way: Let $(W_{t},P)$
be a continuous time $S$-valued Markov chain  with rate of jumps $\kappa>0$, and symmetric transition jump probabilities
$
p_{x,y}=p_{y,x},\,x,y\in S.
$ Set $p_{t}(x,y)=P(W_{t}=y|W_{0}=x)$
as transitions probabilities. Let $Q=(q_{x,y})$ denote the associated
$Q$-matrix; that is $q_{x,y}=\kappa p_{x,y}$ is the jump rate from $x$ to $y$
(for $x\neq y$) and $q_{x,x}=-\sum_{y\neq x}q_{x,y}=-\kappa>-\infty$. Clearly, by our assumptions on transition jump probabilities,
 the $Q$-matrix is symmetric ($q_{x,y}=q_{y,x}$).
Define the Green function for every $x,y\in S$: 
\begin{equation}
g_{t}(x,y)=\intop_{0}^{t}p_{s}(x,y)ds.\label{eq:green-f-def}
\end{equation}
Note that if our motion process is a symmetric random walk on $S$,
hence, with certain abuse of notation, $g_{t}(x,y)=g_{t}(y,x)=g_{t}(x-y)$,
in particular $g_{t}(x,x)=g_{t}(0)$.

Let $P_{t}f(x)=\sum_{y}p_{t}(x,y)f(y)$ be the semigroup associated
with 
the Markov chain $W$ and $Qf(x)=\sum_{y}q_{x,y}f(y)$ is its generator.
\begin{rem}
If $W$ is a symmetric random walk, then
clearly $g_{\infty}(0)<\infty$ means that $W$ is transient, and
$g_{\infty}(0)=\infty$ implies that $W$ is recurrent. 
\end{rem}
Let $\mathcal{F}=\left(\mathcal{F}_{t}\right)_{t\geq0}$ be a (right-continuous,
complete) natural filtration. In what follows, when we call a process
martingale, we mean that it is an $\mathcal{F}_{t}$-martingale. 

Let
\[
\{N_{x,y}^{RW_{\xi}}\}_{x,y\in S, x\not=y},\,\,\,\{N_{x,y}^{RW_{\eta}}\}_{x,y\in S, x\not=y},\,\,\,\{N_{x,k}^{br_{\xi}}\}_{x\in S,k\in\mathbb{Z}_{+}},\,\,\,\{N_{x,k}^{br_{\eta}}\}_{x\in S,k\in\mathbb{Z}_{+}}
\]
denote independent Poisson point processes on $\mathbb{R}_{+}\times\mathbb{R}_{+}$
. We assume that, for any $x,y\in S,\,\, x\neq y$ both Poisson point
processes $N_{x,y}^{RW_{\xi}}$ and $N_{x,y}^{RW_{\eta}}$ have intensity
measure $\kappa p_{x,y}ds\otimes du$. Similarly, we assume that,
for any $x\in S,\,\, k\in\mathbb{Z}_{+}$ both Poisson point processes
$N_{x,k}^{br_{\xi}}$ and $N_{x,k}^{br_{\eta}}$ have intensity measure
$\nu_{k}ds\otimes du$. We assume that the above Poisson processes
are $\mathcal{F}$-adapted in the ``time'' $s$-coordinate.

Now we are going to define the pair of processes $(\xi_{t},\eta_{t})_{t\geq0}$ where
 $(\xi_{t},\eta_{t})\in\mathbb{N}_{0}^{S}\times\mathbb{N}_{0}^{S}$, and $\mathbb{N}_{0}$  denotes the set of non-negative integers.

For any $x\in S$, $\xi_{t}(x)$ counts the number of particles from
the first population at site $x$ at time $t$. Similarly, for any
$x\in S$, $\eta_{t}(x)$  counts the number of particles from the second
population at site $x$ at time $t$.

Now we are ready to describe $(\xi_{t},\eta_{t})_{t\geq0}$ formally
as a solution of the following system of equations:

\begin{align}
\xi_{t}(x)  = & \xi_{0}(x)+\sum_{y\neq x}\left\{ \intop_{0}^{t}\intop_{\mathbb{R}_{+}}1_{\{\xi_{s-}(y)\geq u\}}N_{y,x}^{RW_{\xi}}(dsdu)-\intop_{0}^{t}\intop_{\mathbb{R}_{+}}1_{\{\xi_{s-}(x)\geq u\}}N_{x,y}^{RW_{\xi}}(dsdu)\right\} \nonumber \\
 &   +\sum_{k\geq0}\intop_{0}^{t}\intop_{\mathbb{R}_{+}}(k-1)1_{\{\gamma\eta_{s-}(x)\xi_{s-}(x)\geq u\}}N_{x,k}^{br_{\xi}}(dsdu)\:,\: t\geq0,\, x\in S,\label{eq:main_equation}\\
\eta_{t}(x)  = & \eta_{0}(x)+\sum_{y\neq x}\left\{ \intop_{0}^{t}\intop_{\mathbb{R}_{+}}1_{\{\eta_{s-}(y)\geq u\}}N_{y,x}^{RW_{\eta}}(dsdu)-\intop_{0}^{t}\intop_{\mathbb{R}_{+}}1_{\{\eta_{s-}(x)\geq u\}}N_{x,y}^{RW_{\eta}}(dsdu)\right\} \nonumber \\
 &   +\sum_{k\geq0}\intop_{0}^{t}\intop_{\mathbb{R}_{+}}(k-1)1_{\{\gamma\xi_{s-}(x)\eta_{s-}(x)\geq u\}}N_{x,k}^{br_{\eta}}(dsdu)\:,\: t\geq0,\, x\in S.\nonumber 
\end{align}

Why do these equations actually describe our processes? The first
sum on the right-hand side of equations for $\xi$ and $\eta$ describes
the random walks of particles, and the second sum describes their
branching. The first integrals in the first sums describe all particles
jumping to site $x$ from different sites $y\neq x$.  The second
integrals in the first sum describe particles that leave site $x$.
The last integral describes the death of a particle at site $x$ and
the birth of its $k$ offsprings, so after that event the number of particles at the site has changed by 
$k-1$. The branching events at site $x$
happen with the infinitesimal rate proportional to the product of
the number of particles of both populations at site $x$.
\begin{defn}
The process $(\xi_{t},\eta_{t})$ solving \eqref{eq:main_equation}
is called a \emph{mutually catalytic branching process} with initial
conditions $(\xi_{0},\eta_{0})$.\end{defn}

\subsection{Main Results}

We start with stating the result on the existence and uniqueness of
the solution for the system of equation \eqref{eq:main_equation}.
This implies that the process we described in the introduction does
exist and is defined uniquely via the solution to \eqref{eq:main_equation}.
In the next theorem, we formulate the result for finite initial conditions,
i.e. each population has a finite number of particles at initial time
($t=0$). First, we introduce another piece of notation.
For $m\in\mathbb{N}$, define $L^{m}$-norm of $\varphi\in\mathbb{Z}^{S}$:
\begin{equation}
\left\Vert \varphi\right\Vert _{m}:=\left(\sum_{i\in S}|\varphi(i)|^{m}\right)^{1/m}.\label{eq:def-norma_1}
\end{equation}
Similarly, for any  $(\varphi,\psi)\in\mathbb{Z}^{S}\times\mathbb{Z}^{S}$,  $(\varphi,\psi,\tilde\varphi,\tilde\psi)\in\left(\mathbb{Z}^{S}\right)^4$, with some abuse of
notation, we define
\begin{equation}
\left\Vert \left(\varphi,\psi\right)\right\Vert _{m}:=\left(\sum_{i\in S}\left(|\varphi(i)|^{m}+|\psi(i)|^{m}\right)\right)^{1/m}.\label{eq:def-norma}
\end{equation}
\[
\left\Vert \left(\varphi,\psi,\tilde\varphi,\tilde\psi\right)\right\Vert _{m}:=\left(\sum_{i\in S}\left(|\varphi(i)|^{m}+|\psi(i)|^{m}+|\tilde\varphi(i)|^{m}+|\tilde\psi(i)|^{m}\right)\right)^{1/m}.
\]
In addition let us  define the space of functions $E_{fin}$:
\[
E_{fin}=\left\{ f:S\to\mathbb{N}_{0}|\left\Vert f\right\Vert _{1}<\infty\right\}.
\]
We equip $E_{fin}$ with the metric: $d_{E_{fin}}(f,g)= \left\Vert f-g \right\Vert _{1}$ for any $f,g\in E_{fin}$.



\begin{thm}
\label{lem:Lemma-1}Let $S\subset\mathbb{Z}^{d}$. a) For any initial
conditions $\mbox{\ensuremath{(\xi_{0},\eta_{0})\in E_{fin}\times E_{fin}}}$
there is a unique strong solution $(\xi_{t},\eta_{t})_{t\geq0}$ to
\eqref{eq:main_equation}, taking values in $E_{fin}\times E_{fin}$.

b) The solution $\left\{ (\xi_{t},\eta_{t}),\,\, t\geq0\right\} $
to \eqref{eq:main_equation} is a Markov process.
\end{thm}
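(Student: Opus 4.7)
The strategy is to build $(\xi_t,\eta_t)$ pathwise from the driving Poisson noises as in \cite{Birkner2003,LiggetSpitzer}, then rule out explosion via the criticality assumption in~\eqref{eq:variance-assumptions}, and finally read off strong uniqueness and the Markov property from the construction.

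Set $M_t:=\|\xi_t\|_1+\|\eta_t\|_1$. Given $(\xi_0,\eta_0)\in E_{fin}\times E_{fin}$, suppose the process has been defined up to some $\mathcal{F}$-stopping time $T$ with $(\xi_T,\eta_T)\in E_{fin}\times E_{fin}$. The total intensity of Poisson atoms that can trigger an event at time $T$ is bounded by $\kappa M_T+\gamma M_T^2/2<\infty$ (total random-walk rate $\kappa M_T$; total branching rate $\gamma\sum_x\xi_T(x)\eta_T(x)\leq \gamma M_T^2/4$ for each population). Hence the next event time $T'>T$ is a.s.\ strictly larger, and $(\xi_{T'},\eta_{T'})\in E_{fin}\times E_{fin}$ is determined by the firing atom via~\eqref{eq:main_equation}. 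Iterating produces stopping times $0=T_0<T_1<\cdots$ and an $\mathcal{F}$-adapted candidate process on $[0,\tau)$, where $\tau:=\sup_n T_n$.

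To prove $\tau=\infty$ almost surely, sum~\eqref{eq:main_equation} over $x\in S$: the random-walk telescoping sums cancel and, using $\sum_k(k-1)\nu_k=0$ from~\eqref{eq:variance-assumptions}, the branching contributions form compensated Poisson integrals, so $\|\xi_{\cdot\wedge T_n}\|_1$ and $\|\eta_{\cdot\wedge T_n}\|_1$ are non-negative local martingales, hence supermartingales. Setting $\tau_K:=\inf\{s<\tau:M_s\ge K\}$, Doob's maximal inequality applied to the supermartingale $M_{\cdot\wedge T_n}$ gives
\[
P(\tau_K\le t\wedge T_n)\le P\Big(\sup_{s\le t\wedge T_n}M_s\ge K\Big)\le M_0/K,
\]
and letting $n\to\infty$ yields $P(\tau_K\le t)\le M_0/K$. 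On $\{\tau_K>t\}$ the total event rate is uniformly bounded by $\kappa K+\gamma K^2/2$ on $[0,t]$, so only finitely many atoms fire there, which forces $\tau>t$; hence $\{\tau\le t\}\subset\{\tau_K\le t\}$ up to a null set. Sending $K\to\infty$ concludes $P(\tau\le t)=0$ for every $t>0$.

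Strong uniqueness follows by induction on $T_n$: given the driving noises, two strong $E_{fin}\times E_{fin}$-valued solutions must share the same event times and updates and therefore agree pathwise. The Markov property is immediate from the construction, since the evolution of $(\xi_{t+s},\eta_{t+s})_{s\ge 0}$ is a deterministic functional of $(\xi_t,\eta_t)$ and of the restrictions of $\{N^{RW_\xi},N^{RW_\eta},N^{br_\xi},N^{br_\eta}\}$ to $(t,\infty)\times\mathbb{R}_+$, which are independent of $\mathcal{F}_t$ by the independent-increments property of Poisson measures. The main technical obstacle is the non-explosion step: because $\gamma\xi(x)\eta(x)$ is a superlinear functional of the state, a naive pure-birth comparison is insufficient, and criticality of $\nu$ is essential, as it is precisely what makes the total mass a local martingale and thereby enables the Doob-inequality argument above.
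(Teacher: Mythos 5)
Your proposal is correct and follows essentially the same route as the paper: a jump-by-jump pathwise construction from the Poisson drivers, non-explosion via the fact that criticality of $\nu$ makes the total mass a non-negative (local) martingale, uniqueness by induction over the jump times, and the Markov property from independent, stationary increments of the Poisson measures. The only cosmetic difference is that you control the mass with Doob's maximal inequality on the continuous-time supermartingale, whereas the paper applies the martingale convergence theorem to the discrete-time martingale $M_n=\sum_{x}(\xi_{T_n}(x)+\eta_{T_n}(x))$; both yield $\sup_n T_n=\infty$ a.s., and your rate bound $\kappa K+\gamma K^2/2$ in fact spells out a step the paper leaves implicit.
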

It is possible to generalize the result to some infinite mass initial conditions
case but since this is not the goal of this paper it will be done elsewhere.

Let $(\xi,\eta)$ be the process constructed in Theorem \ref{lem:Lemma-1}
with finite initial conditions. Denote 
\[
\boldsymbol{\xi}_t=
\sum_{i\in S}\xi_t(i) ,\,\,\, \boldsymbol{\eta}_{t}=
\sum_{i\in S}\eta_t(i) ,\,\,\, t\geq0.
\]
That is, $\boldsymbol{\xi}$ is the total mass process of $\xi$, and $\boldsymbol{\eta}$
is the total mass process of $\eta$. Clearly by construction, $\boldsymbol{\xi}$
and $\boldsymbol{\eta}$ are non-negative local martingales and hence by the martingale
convergence theorem there exist an a.s. limits
\[
\boldsymbol{\xi}_{\infty}=\lim_{t\rightarrow\infty}\boldsymbol{\xi}_t,\,\,\,\boldsymbol{\eta}_{\infty}=\lim_{t\rightarrow\infty}\boldsymbol{\eta}_{t}.
\]
Now we are ready to give a definition of coexistence or non-coexistence.
\begin{defn}
\label{Coexistence-is-possible. Finite-1}Let $(\xi,\eta)$ be a unique
strong solution to \eqref{eq:main_equation} with $(\xi_{0},\eta_{0})\in E_{fin}\times E_{fin}$.
We say that \emph{coexistence is possible} for $(\xi,\eta)$ if $\mbox{\ensuremath{\mathbb{P}(\boldsymbol{\xi}_{\infty}\boldsymbol{\eta}_{\infty}>0)>0}}$.
We say that \emph{coexistence is impossible} for $(\xi,\eta)$ if
$\mathbb{P}(\boldsymbol{\xi}_{\infty}\boldsymbol{\eta}_{\infty}>0)=0$.
\end{defn}

\begin{description}
 \item [\hspace*{-1cm} {Convention}]We say that the motion process for the \emph{mutually
catalytic branching process} on $S=\mathbb{Z}^{d}$, is the nearest
neighbor random walk if
\[
p_{x,y}=\frac{1}{2d}\,\,\,\mathrm{for}\,\, y=x\pm e_{i},
\]
for $e_{i}$ a unit vector in an axis direction, $i=1,...,d$.
\end{description}
We will prove that in the finite initial conditions case, with motion
process being the nearest neighbor random walk, the coexistence is
possible if and only if the random walk is transient. Recall that
the nearest neighbor random walk is recurrent in dimensions $d=1,2$,
and it is transient in dimensions $d\geq3$. Then we have the following
theorem. 
\begin{thm}
\label{thm:Coexistance-finite-1} Let $S=\mathbb{Z}^{d}$ and assume
that the motion process is the nearest neighbor random walk. Let $(\xi_{0},\eta_{0})\in E_{fin}\times E_{fin}$ with $\boldsymbol{\xi}_{0}\boldsymbol{\eta}_{0}>0$. 

(a) If $d\geq3$, then coexistence of types is possible.

(b) If $d\leq2$, then coexistence of types is impossible.
\end{thm}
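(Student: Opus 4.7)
My plan is to prove both parts via a single coupling argument. Let $(\tilde\xi_t,\tilde\eta_t)_{t\ge 0}$ denote the ``branching-free'' process obtained from~\eqref{eq:main_equation} by dropping the branching Poisson processes $N^{br_\xi}$, $N^{br_\eta}$; under these dynamics the particles simply perform independent continuous-time random walks driven by $N^{RW_\xi}$, $N^{RW_\eta}$. Set $\tilde\ell_s=\sum_{x\in\mathbb{Z}^d}\tilde\xi_s(x)\tilde\eta_s(x)$, $\tilde L_t=\int_0^t\tilde\ell_s\,ds$, and let $\tau_{br}$ denote the first time a branching event occurs in the original process $(\xi,\eta)$. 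Since $(\xi_s,\eta_s)=(\tilde\xi_s,\tilde\eta_s)$ for $s<\tau_{br}$, the Poisson point process construction, together with the independence between the random-walk and branching Poisson processes, yields the crucial coupling identity
\[
\mathbb{P}\bigl(\tau_{br}>t\,\bigm|\,N^{RW_\xi},N^{RW_\eta}\bigr)=\exp\bigl(-2\gamma\tilde L_t\bigr),\qquad t\ge 0,
\]
the factor $2$ accounting for the two independent families of branching Poisson processes.

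For part (a), $d\ge 3$: Under the pure random walk dynamics one has
\[
\tilde L_\infty=\sum_{i,j}\int_0^\infty 1_{\{\tilde X^{(i)}_s=\tilde Y^{(j)}_s\}}\,ds,
\]
where the finite sum runs over the initial $\xi$-particles $i$ and $\eta$-particles $j$, and $\tilde X^{(i)},\tilde Y^{(j)}$ are their independent random walks. Each summand is the occupation time of the origin by the symmetric difference walk $\tilde X^{(i)}-\tilde Y^{(j)}$ on $\mathbb{Z}^d$, which is transient in dimension $d\ge 3$; its expectation is bounded by the Green function of the difference walk at $0$, and hence the summand is a.s.\ finite. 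Consequently $\tilde L_\infty<\infty$ a.s., and letting $t\to\infty$ in the coupling identity gives $\mathbb{P}(\tau_{br}=\infty)=\mathbb{E}[\exp(-2\gamma\tilde L_\infty)]>0$. On $\{\tau_{br}=\infty\}$ no branching ever occurs and the total masses are preserved, so $\boldsymbol{\xi}_\infty\boldsymbol{\eta}_\infty=\boldsymbol{\xi}_0\boldsymbol{\eta}_0>0$, which proves coexistence is possible.

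For part (b), $d\le 2$: Define $\tau_0=\inf\{t\ge 0:\boldsymbol{\xi}_t\boldsymbol{\eta}_t=0\}$. Once one total mass hits $0$ it is frozen there forever (no further branching is possible), so it suffices to show $\mathbb{P}(\tau_0=\infty)=0$. Suppose for contradiction that $\mathbb{P}(\tau_0=\infty)>0$. On $\{\tau_0=\infty\}$ we have $\boldsymbol{\xi}_t,\boldsymbol{\eta}_t\ge 1$ for all $t$. In recurrent dimensions $d\le 2$ the symmetric difference walk is recurrent and spends infinite time at $0$ a.s., so the same formula as above shows that, started from any configuration with at least one particle of each type, $\tilde L_\infty=\infty$ a.s.\ under the pure random walk dynamics. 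The coupling identity then gives $\mathbb{P}(\tau_{br}<\infty)=1$ from every such starting configuration. Applying the strong Markov property from Theorem~\ref{lem:Lemma-1}(b) iteratively at the successive branching times, we conclude that infinitely many branching events occur a.s.\ on $\{\tau_0=\infty\}$; since the two branching intensities are equal, infinitely many $\xi$-branching events occur a.s.\ as well. Enumerate them as $T^\xi_1<T^\xi_2<\cdots$. Between $T^\xi_n$ and $T^\xi_{n+1}$ the total mass $\boldsymbol{\xi}$ is constant (the random walk preserves total mass), while at $T^\xi_n$ it changes by an independent copy of $Z-1$, which is centered with variance $\sigma^2<\infty$. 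Hence $(\boldsymbol{\xi}_{T^\xi_n})_n$ is a centered random walk on $\mathbb{Z}$ with i.i.d., finite-variance increments, which is recurrent by Chung--Fuchs and therefore hits $0$ a.s.\ in finitely many steps. Since $\boldsymbol{\xi}=0$ is absorbing, this forces $\tau_0<\infty$, contradicting $\tau_0=\infty$.

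The main technical obstacle is the coupling identity in the first paragraph. Verifying it cleanly requires (i) constructing $(\tilde\xi,\tilde\eta)$ on the same probability space as $(\xi,\eta)$ and checking they coincide on $[0,\tau_{br})$, and (ii) exploiting the independence of the random-walk Poisson processes from the branching Poisson processes to compute the conditional survival function of $\tau_{br}$ as $\exp(-2\gamma\tilde L_t)$. Once this identity is in place, the transience/recurrence dichotomy for the difference walk on $\mathbb{Z}^d$ and the Chung--Fuchs recurrence theorem for $\mathbb{Z}$-valued centered finite-variance random walks yield both parts directly.
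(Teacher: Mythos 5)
Your argument is correct. For part (a) it is essentially the paper's argument: the paper also suppresses the branching, uses transience to ensure the two populations only interact for a finite (random) amount of time, and concludes that with positive probability no branching event ever occurs, so both total masses are frozen at their positive initial values; your formulation via the total collision time $\tilde L_\infty<\infty$ and the explicit conditional survival function $\exp(-2\gamma\tilde L_t)$ is just a more quantitative phrasing of the same coupling. For part (b), however, you take a genuinely different route. The paper observes that $\boldsymbol{\xi}_t\boldsymbol{\eta}_t$ is a non-negative, \emph{integer-valued} supermartingale, hence a.s. eventually constant; on the survival event, recurrence forces a branching event after the constancy time, contradicting constancy. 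You instead use recurrence only to produce infinitely many branching events (via the coupling identity and the strong Markov property), and then kill the population by applying Chung--Fuchs to the embedded total-mass walk $\boldsymbol{\xi}_{T^\xi_n}=\boldsymbol{\xi}_0+\sum_{i\le n}(Z_i-1)$, which is centered with finite variance and therefore hits $0$ (note that since $\mathbb{E}Z=1$ and $\sigma^2>0$ force $\nu_0>0$, the minimal downward step is $-1$, so $\liminf_n\boldsymbol{\xi}_{T^\xi_n}=-\infty$ for the free walk indeed entails passage through $0$). The paper's route is shorter because it exploits the special supermartingale structure of the product of total masses; yours is more self-contained, makes explicit where criticality and finite variance enter, and cleanly sidesteps the fact that a branching event with $k=1$ offspring changes nothing (a point the paper's contradiction glosses over, since such an event does not perturb $\boldsymbol{\xi}_t\boldsymbol{\eta}_t$). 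Both arguments implicitly require $\sigma^2>0$, without which part (b) is false.
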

The proof is simple and based on the following observation: if there
is a finite number of particles and the motion is recurrent --- the
particles will meet an infinite number of times, and eventually one
of the populations dies out, due to the criticality of the branching
mechanism. On the other hand, if the motion is transient --- there
exists a finite time such that after this time the particles of different
populations never meet, and hence there is a positive probability
of survival of both populations.

Finally we are interested in a finite system scheme. We construct
a system of renormalized processes started from an exhausting sequence
of finite subsets of $\mathbb{Z}^{d}$, $\Lambda_{n}\subset\mathbb{Z}^{d}$.
The duality techniques will be used to investigate its limiting behavior. 

Define 
\begin{eqnarray*}
\Lambda_{n} & = & \left\{ x\in\mathbb{Z}^{d}\,|\,\forall i=1,...,d,\left|x_{i}\right|\leq n\right\} \subseteq\mathbb{Z}^{d},\\
\left|\Lambda_{n}\right| & = & (2n+1)^{d}.
\end{eqnarray*}

\begin{description}
\item [\hspace*{-1cm}{Convention}] Let $S=\Lambda_{n}$. We say that the motion process
is the nearest neighbor random walk on $\Lambda_{n}$ if its transition
jump probabilities are given by
\[
p_{x,y}^{n}=p_{0,y-x}^{n}=\begin{cases}
\frac{1}{2d}, & \mathrm{if}\,\left|x-y\right|=1,\\
0, & \mathrm{otherwise,}
\end{cases}
\]
where ``$y-x$'' is the difference on the torus $\Lambda_{n}$.
\end{description}
Fix $\bar{\theta}=\left(\theta_{1},\theta_{2}\right)$ with $\theta_{1},\theta_{2} \in\mathbb{N}_{0}$.
Assume $\bar{\boldsymbol{\theta}}=\left(\boldsymbol{\theta}_{1},\boldsymbol{\theta}_{2}\right)$,
where $\mbox{\ensuremath{\boldsymbol{\theta}_{i}=(\theta_{i},\theta_{i},...)\in\mathbb{N}_{0}^{\Lambda_{n}}}}$,
$i=1,2$. Let $(\xi_{t},\eta_{t})_{t\geq0}$ be the mutually catalytic
branching process with initial conditions $(\xi_{0},\eta_{0})=\bar{\boldsymbol{\theta}}$,
and site space $S=\Lambda_{n}$, and motion process being the nearest
neighbor walk on $\Lambda_{n}$.

Set

\begin{equation}
\boldsymbol{\xi}_{t}^{n}=\sum_{j\in\Lambda_{n}}\xi_{j}(t),\,\,\,\,\,\,\boldsymbol{\eta}_{t}^{n}=\sum_{j\in\Lambda_{n}}\eta_{j}(t).\label{eq:xi_N_def-1}
\end{equation}
We define the following time change:
\[
\beta_{n}(t)=\left|\Lambda_{n}\right|t,\,\,\, t\geq0.
\]
Our goal is to identify the limiting distribution of
\[
\frac{1}{\left|\Lambda_{n}\right|}\left(\boldsymbol{\xi}_{\beta_{n}(t)}^{n},\boldsymbol{\eta}_{\beta_{n}(t)}^{n}\right),
\]
as $n\rightarrow\infty$, for all $t\geq0$.
\begin{thm}
\label{thm:MainResult-1}Let $d\geq3$, and assume that 
\begin{equation}
\label{eq:gamsigma_bound}
\gamma\sigma^{2}<\frac{1}{\sqrt{3^{5}}(\frac12 g_{\infty}(0)+1)}, 
\end{equation}
 and $\sum_{k}k^{3}\nu_{k}<\infty.$
Then for any $T\in (0,1]$, we have 
\[
\mathcal{L}\left(\frac{1}{\left|\Lambda_{n}\right|}\left(\boldsymbol{\xi}_{\beta_{n}(T)}^{n},\boldsymbol{\eta}_{\beta_{n}(T)}^{n}\right)\right)\longrightarrow\mathcal{L}\left(X_{T},Y_{T}\right),\,\, as\,\, n\rightarrow\infty,
\]
where $\left(X_{t},Y_{t}\right)_{t\geq0}$ is a solution of the following
system of stochastic differential equations
\begin{equation}
\begin{cases}
dX_{t}=\sqrt{\gamma\sigma^{2}X_{t}Y_{t}}dw^{1}(t), & t\geq0,\\
dY_{t}=\sqrt{\gamma\sigma^{2}X_{t}Y_{t}}dw^{2}(t), & t\geq0,
\end{cases}\label{eq:XY-1}
\end{equation}
with initial conditions $(X_{0},Y_{0})=\bar{\theta}$, where $w^{1},\, w^{2}$
are two independent standard Brownian motions.
\end{thm}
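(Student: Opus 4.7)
The plan is to convert the stated one-time convergence into a martingale-problem identification. Set
\[
\tilde{\boldsymbol{\xi}}^n_t := |\Lambda_n|^{-1}\boldsymbol{\xi}^n_{\beta_n(t)},\qquad \tilde{\boldsymbol{\eta}}^n_t := |\Lambda_n|^{-1}\boldsymbol{\eta}^n_{\beta_n(t)}.
\]
First I would use the construction from Theorem~\ref{lem:Lemma-1} and It\^o's formula for~\eqref{eq:main_equation} applied to $\sum_x\xi(x)$. Because each random-walk atom of $N^{RW_\xi}_{y,x}$ simultaneously removes one $\xi$-particle at $y$ and adds one at $x$, the migration part vanishes pathwise when summed, while for the branching part $\sum_k(k-1)\nu_k=0$ gives compensation and $\sum_k(k-1)^2\nu_k=\sigma^2$ gives the variance. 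After the time change this yields that $\tilde{\boldsymbol{\xi}}^n,\tilde{\boldsymbol{\eta}}^n$ are purely discontinuous $\mathcal{F}$-martingales, orthogonal (they jump on the disjoint Poisson processes $N^{br_\xi}$ and $N^{br_\eta}$), with sharp brackets
\[
\langle\tilde{\boldsymbol{\xi}}^n\rangle_t=\langle\tilde{\boldsymbol{\eta}}^n\rangle_t=\gamma\sigma^2\int_0^t A^n_s\,ds,\qquad A^n_s:=|\Lambda_n|^{-1}\!\sum_{x\in\Lambda_n}\xi_{\beta_n(s)}(x)\eta_{\beta_n(s)}(x),
\]
and under the third-moment hypothesis on $\nu$ the maximal jumps are $O(|\Lambda_n|^{-1})$, so every weak limit point will be continuous.

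The core of the argument is to replace the microscopic functional $A^n_s$ by the closed functional $\tilde{\boldsymbol{\xi}}^n_s\tilde{\boldsymbol{\eta}}^n_s$ that only sees the total masses. Concretely, I would establish the homogenization estimate
\begin{equation}\label{eq:homog-plan}
\sup_{s\le T}\mathbb{E}\!\left[\bigl(A^n_s-\tilde{\boldsymbol{\xi}}^n_s\tilde{\boldsymbol{\eta}}^n_s\bigr)^2\right]\;\xrightarrow[n\to\infty]{}\;0.
\end{equation}
Expanding, \eqref{eq:homog-plan} reduces to control on the four-point correlations $\mathbb{E}[\xi_t(x_1)\xi_t(x_2)\eta_t(y_1)\eta_t(y_2)]$ at $t=\beta_n(s)$. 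The generator of~\eqref{eq:main_equation} acting on these monomials produces the random-walk transport part plus \emph{diagonal} source terms of the form $\sigma^2\gamma\,\xi(z)\eta(z)\delta_{x_i=x_j=z}$ (and symmetrically in the $\eta$'s), arising from the quadratic-variation increment at a single site. Iterating the Duhamel formula realises a moment duality with a system of four coalescing random walks on $\Lambda_n$, each same-type collision contributing a factor $\gamma\sigma^2$ times the Green function. Since the torus Green function is dominated by $g_\infty(0)$ for $d\ge 3$, the hypothesis~\eqref{eq:gamsigma_bound} is calibrated so that the resulting Neumann series converges uniformly in $n$: the factor $\tfrac12 g_\infty(0)+1$ bounds the one-step collision kernel including its diagonal, and $\sqrt{3^5}$ absorbs the combinatorics of index pairings in the fourth-moment recursion. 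Combined with the fact that the walk on $\Lambda_n$ mixes in time $O(n^2)\ll\beta_n(1)=|\Lambda_n|$ for $d\ge 3$, these bounds yield~\eqref{eq:homog-plan}.

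Given Steps~1 and~2, the family $\{(\tilde{\boldsymbol{\xi}}^n,\tilde{\boldsymbol{\eta}}^n)\}$ is tight in $D([0,T],\mathbb{R}_+^2)$ by the Aldous--Rebolledo criterion applied to the semimartingale decomposition, because the quadratic variations are uniformly $L^1$-bounded by the same fourth-moment estimates. By~\eqref{eq:homog-plan} and continuity of the time-integral functional, any weak limit $(X,Y)$ is a pair of continuous orthogonal $L^2$-martingales with $\langle X\rangle_t=\langle Y\rangle_t=\gamma\sigma^2\int_0^t X_sY_s\,ds$ and $\langle X,Y\rangle\equiv 0$. Hence $(X,Y)$ solves the martingale problem associated with~\eqref{eq:XY-1}; weak uniqueness for this system (as in~\cite{DawsonPerkins98}) pins it down as the unique weak solution with $(X_0,Y_0)=\bar\theta$, and the convergence at the fixed time $T$ follows. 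The decisive obstacle is~\eqref{eq:homog-plan}: $A^n_s$ is a purely local quantity whereas its conjectured replacement sees only total masses, so the argument must exploit the exact structure of the dual coalescing-walk representation, and the specific constant in~\eqref{eq:gamsigma_bound} is precisely what makes this moment series remain summable uniformly in the torus size.
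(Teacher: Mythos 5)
Your route is genuinely different from the paper's. The paper never closes a martingale problem for the rescaled total masses directly. Instead it uses an \emph{approximate duality}: with the mixed Laplace--Fourier function $H(\varphi,\psi,\tilde\varphi,\tilde\psi)=e^{-\langle\varphi+\psi,\tilde\varphi+\tilde\psi\rangle-i\langle\varphi-\psi,\tilde\varphi-\tilde\psi\rangle}$ it shows, via Lemma 4.10 of Ethier--Kurtz and generator/Taylor computations, that $\mathbb{E}\bigl[H(\xi^n_{\beta_n(T)},\eta^n_{\beta_n(T)},\tilde u^n_0,\tilde v^n_0)\bigr]-\mathbb{E}\bigl[H(\xi^n_0,\eta^n_0,\tilde u^n_{\beta_n(T)},\tilde v^n_{\beta_n(T)})\bigr]\to 0$, where $(\tilde u^n,\tilde v^n)$ is an independent Dawson--Perkins diffusion started from $a/|\Lambda_n|,\,b/|\Lambda_n|$; the exact self-duality of that diffusion and Theorem 1(a) of \cite{CDG04} then identify the limiting transform at time $T$. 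All of the ``local equilibration'' work is thereby transferred to the continuous-state dual, for which the finite-system-scheme limit is already known --- which is also why the paper only obtains one-dimensional distributions. Your plan is the classical Cox--Greven--Shiga scheme applied directly to the particle system and would, if completed, yield the stronger functional convergence.

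The problem is that your decisive step --- the $L^2$ replacement of $A^n_s$ by $\tilde{\boldsymbol{\xi}}^n_s\tilde{\boldsymbol{\eta}}^n_s$ uniformly in $s\le T$ --- is asserted, not proved, and it is exactly where the difficulty of the theorem lives. Two concrete issues. First, the fourth-moment analysis of the \emph{particle} system does not close as cleanly as you describe: the generator applied to $\xi(x_1)\xi(x_2)\eta(y_1)\eta(y_2)$ produces, besides the diagonal sources $\sigma^2\gamma\,\xi(z)\eta(z)$, third-cumulant terms $\mathbb{E}[(Z-1)^3]$ (this is where $\sum_k k^3\nu_k<\infty$ must enter) and couples the moment types $\xi^2\eta^2$, $\xi^3\eta$, $\xi\eta^3$, $\xi^4$, $\eta^4$ into one system; the Neumann series for that system is not the one the constant in \eqref{eq:gamsigma_bound} is calibrated for. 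In the paper this constant arises from the fourth-moment recursion of the \emph{dual diffusion} (Lemma~\ref{cor:E(u^4)<infty}, following Lemma~2.2 of \cite{CGSh98a}), so your claim that it is ``precisely'' what makes the particle-system moment series summable would have to be re-derived and may in fact require a smaller constant than the hypothesis allows. Second, even granting uniform fourth-moment bounds, showing that the centered four-point sums vanish at the diffusive time scale $\beta_n(T)=|\Lambda_n|T$ requires the quantitative time-splitting at $n^\delta$, $\delta\in(2,d)$, that occupies most of Section~\ref{sec:techn_prop}; ``the walk mixes in time $O(n^2)\ll|\Lambda_n|$'' is the right heuristic but not an estimate. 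As written, the proposal reduces the theorem to an unproved homogenization statement that is at least as hard as the theorem itself.
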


\begin{rem}
The above theorem gives convergence of one-dimensional distributions of the rescaled processes $(\boldsymbol{\xi}^{n}, \boldsymbol{\eta}^{n})$ to the one-dimensional distributions of the solution of~\eqref{eq:XY-1} starting at  initial conditions 
$\bar{\theta}\in \mathbb{N}_{0}^2$. It seems possible to treat a more general class of initial conditions, for example,  i.i.d. configurations on 
$\Lambda_{n}$ with
mean vector $\bar{\theta}=(\theta_1, \theta_2)\in \mathbb{R}_+^2$.  However this will make the argument more technically involved, thus we decided to treat this case elsewhere. 
\end{rem}
\begin{rem}
The condition $\gamma\sigma^{2}<\frac{1}{\sqrt{\cdot3^{5}}(\frac12 g_{\infty}(0)+1)}$ arises from the method of proof, which requires boundedness of the fourth moment of the dual processes (see Lemma~\ref{cor:E(u^4)<infty}, where this condition is applied). We conjecture, however, that the result holds without the additional constraint on~$\gamma\sigma^2$, as is the case in the finite scheme for Dawson-Perkins processes.
\end{rem}
The above result is similar, although a bit weaker, to the result
in Theorem~1 in \cite{CDG04}, where a finite scheme for the system
of continuous stochastic differential equations (SDE's) is studied.
The proof of Theorem \ref{thm:MainResult-1} is based on the duality
principle for our particle system and the result for stochastic differential
equations in \cite{CDG04}.  In fact, let us mention that the self-duality property for our mutually catalytic branching particle model (the property which is well  known for processes solving equations of type~\eqref{eq:DP-Lambda_n}) does not hold. Thus, we use  the so called approximating duality technique to prove Theorem \ref{thm:MainResult-1}. The approximating duality technique was used in the past to resolve a number of weak uniqueness problems (see e.g.~\cite{bib:myt96}, \cite{bib:myt98a}).  
We believe that using approximate duality to prove limit theorems is novel and this technique is of independent interest. 

Let us note that it would be very interesting to extend the above results.  First of all it would be nice to address the question of coexistence/non-coexistence for more general motions and infinite mass initial conditions. As for extending results in Theorem \ref{thm:MainResult-1}, we would be interested to check what happens in the case of large $\gamma$, to prove  `functional convergence'' result as in \cite{CDG04}, and investigate the system's behavior for the case of recurrent  motion, that is,  in the dimensions $d=1,2$. We plan to address these problems in the future.

\section{Existence and Uniqueness.  Proof of Theorem~\ref{lem:Lemma-1}}
\label{sec:ex_un}


This section 
is devoted to the proof of Theorem \ref{lem:Lemma-1}.  Note that our proofs follow closely the argument of  Birkner \cite{Birkner2003} with suitable adaptation to the two types case.



For any metric space $D$ with metric $d$, let $\mathrm{Lip}(D)$
denote a set of Lipschitz functions on $D.$ We say that $f:D\rightarrow\mathbb{R}$
is in $\mathrm{Lip}(D)$ if and only if there exists a positive constant
$C\in\mathbb{R}_{+}$ such that for any $\varphi,\psi\in D$, $\left|f(\varphi)-f(\psi)\right|\leq Cd(\varphi,\psi)$.
Let $L^{(2)}$ denote the following operator: for a measurable function $f: E_{fin}\times E_{fin} \to \mathbb{R}$,  let 
\begin{eqnarray*}
L^{(2)}f(\varphi,\psi) & = & \kappa\sum_{x,y\in S}\varphi(x)p_{xy}\left(f(\varphi^{x\rightarrow y},\psi)-f(\varphi,\psi)\right)\\
 &  & +\kappa\sum_{x,y\in S}\psi(x)p_{xy}\left(f(\varphi,\psi^{x\rightarrow y})-f(\varphi,\psi)\right)\\
 &  & +\sum_{x\in S}\gamma\varphi(x)\psi(x)\sum_{k\geq0}\nu_{k}\left(f(\varphi+(k-1)\delta_{x},\psi)-f(\varphi,\psi)\right)\\
 &  & +\sum_{x\in S}\gamma\varphi(x)\psi(x)\sum_{k\geq0}\nu_{k}\left(f(\varphi,\psi+(k-1)\delta_{x})-f(\varphi,\psi)\right),
\end{eqnarray*}
where $\varphi^{x\rightarrow y}=\varphi+\delta_{y}-\delta_{x}$, i.e. $\varphi^{x\rightarrow y}(x)=\varphi(x)-1$,
$\varphi^{x\rightarrow y}(y)=\varphi(y)+1$ and $\varphi^{x\rightarrow y}(z)=\varphi(z)$
for all $z\in S$ and $z\neq x,y$.

Theorem \ref{lem:Lemma-1} follows immediately from the next lemma.
\begin{lem}
\label{lem:Lemma}a) For any initial conditions $(\xi_{0},\eta_{0})\in E_{fin}\times E_{fin}$
there is a unique strong solution $(\xi_{t},\eta_{t})_{t\geq0}$ to
\eqref{eq:main_equation}, taking values in $E_{fin}\times E_{fin}$.

b) The solution $\left\{ (\xi_{t},\eta_{t}),\,\, t\geq0\right\} $
to \eqref{eq:main_equation} is a Markov process.

c) Let $m\in\mathbb{N}$. If $\sum_{k}k^{m}\nu_{k}<\infty$, there
exists a constant $C_{m}$ such that
\begin{equation}
\mathbb{E}\left[\left\Vert \left(\xi_{t},\eta_{t}\right)\right\Vert _{m}^{m}\right]\leq\exp(C_{m}t)\left\Vert \left(\xi_{0},\eta_{0}\right)\right\Vert _{m}^{m}.\label{eq:thm1ineq}
\end{equation}

d) 
For $f\in\mathrm{Lip}(E_{fin}\times E_{fin}),\,\,\,(\xi_{0},\eta_{0})\in E_{fin}\times E_{fin}$,
\begin{equation}
M^{f}(t):=f(\xi_{t},\eta_{t})-f(\xi_{0},\eta_{0})-\intop_{0}^{t}L^{(2)}f(\xi_{s},\eta_{s})ds,\label{eq:Mf}
\end{equation}
is a martingale. Moreover, if $f\in\mathrm{Lip}(\mathbb{R}_{+}\times E_{fin}\times E_{fin})$
and there is constant $C^{*}$ such that 
\begin{equation}
\left|\frac{\partial}{\partial s}f(s,\varphi,\psi)\right|\leq C^{*}\left\Vert (\varphi,\psi)\right\Vert _{2}^{2}\label{eq:partial_f_condition}
\end{equation}
 for any $\varphi,\psi\in E_{fin}$, then
\begin{equation}
N^{f}(t):=f(t,\xi_{t},\eta_{t})-f(0,\xi_{0},\eta_{0})-\intop_{0}^{t}\left[L^{(2)}f(s,\xi_{s},\eta_{s})+\frac{\partial}{\partial s}f(s,\xi_{s},\eta_{s})\right]ds,\label{eq:N^f-def}
\end{equation}
is also a martingale.\end{lem}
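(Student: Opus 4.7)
The plan is to follow Birkner's construction in~\cite{Birkner2003}, adapted to two coupled populations driven by the Poisson point processes in~\eqref{eq:main_equation}, and then to obtain (c) and (d) by applying the pure-jump Dynkin formula associated with the generator $L^{(2)}$.

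For (a), I would first build a truncated process $(\xi^N,\eta^N)$ by replacing each indicator in~\eqref{eq:main_equation} by its intersection with $\{\boldsymbol{\xi}_{s-}+\boldsymbol{\eta}_{s-}\le N\}$. The total jump rate is then bounded, so the standard Poisson-point-process recipe (constructing the solution iteratively between the finitely many atoms in any bounded rectangle) gives unique strong existence in $E_{fin}\times E_{fin}$. Setting $\tau_N:=\inf\{t\ge 0:\boldsymbol{\xi}_t+\boldsymbol{\eta}_t\ge N\}$ and using uniqueness of each truncation, the processes $(\xi^N,\eta^N)$ agree on $[0,\tau_N)$, and I would define $(\xi,\eta)$ as their common extension. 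Non-explosion, namely $\tau_N\uparrow\infty$ a.s., follows from the $m=1$ case of~\eqref{eq:thm1ineq} for the truncated systems together with Markov's inequality; note that because the branching law is critical, $\mathbb{E}[\boldsymbol{\xi}_t^N+\boldsymbol{\eta}_t^N]=\|(\xi_0,\eta_0)\|_1$ for every $N$.

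For (c), the plan is to apply $L^{(2)}$ to $f_m(\varphi,\psi):=\sum_i(\varphi(i)^m+\psi(i)^m)=\|(\varphi,\psi)\|_m^m$, using each localisation $\tau_N$ to ensure that the resulting local martingale is a true martingale. The migration piece, after symmetrising with $p_{xy}=p_{yx}$ and a binomial expansion, is dominated by $C_m\|(\varphi,\psi)\|_m^m$. The branching piece equals
\[
\gamma\sum_{x\in S}\varphi(x)\psi(x)\sum_{k\ge 0}\nu_k\bigl[(\varphi(x)+k-1)^m-\varphi(x)^m\bigr],
\]
and I would expand the inner bracket by the binomial theorem, use $\mathbb{E}[Z]=1$ together with $\sum_k k^m\nu_k<\infty$, and then control each resulting term $\sum_x \varphi(x)^{a}\psi(x)^{b}$ with $a+b\le m$ by $\|(\varphi,\psi)\|_m^m$ via Young's inequality. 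A Gronwall argument applied to the resulting inequality for $\mathbb{E}[f_m(\xi_{t\wedge\tau_N},\eta_{t\wedge\tau_N})]$ then yields~\eqref{eq:thm1ineq}, and Fatou closes the localisation.

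Part (b) follows from strong uniqueness: conditionally on $\mathcal{F}_s$, the shifted driving Poisson point processes are independent of $\mathcal{F}_s$ with the same intensities, and $(\xi_{s+\cdot},\eta_{s+\cdot})$ satisfies~\eqref{eq:main_equation} starting at $(\xi_s,\eta_s)$, whence the Markov property. For (d), compensating the stochastic integrals against the Poisson measures in~\eqref{eq:main_equation} gives that $M^f$ is a local martingale for every $f\in\mathrm{Lip}(E_{fin}\times E_{fin})$; the Lipschitz bound combined with the $m=2$ estimate from (c) makes the compensator's integrand in $L^1$, upgrading $M^f$ to a genuine martingale. The time-dependent case $N^f$ reduces to $M^f$ by the usual product/integration-by-parts formula, where the hypothesis~\eqref{eq:partial_f_condition} ensures integrability of $\int_0^t\partial_s f(s,\xi_s,\eta_s)\,ds$ via $\mathbb{E}\|(\xi_s,\eta_s)\|_2^2<\infty$.

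The main obstacle I anticipate is the moment estimate in (c): since the branching rate $\gamma\varphi(x)\psi(x)$ couples both coordinates multiplicatively, the branching contribution to $L^{(2)}f_m$ is a priori higher order in $\varphi$ and $\psi$ than $\|(\varphi,\psi)\|_m^m$, and one must show by a judicious application of Young's inequality (trading mixed powers of $\varphi(x)$ and $\psi(x)$ against each other, site by site) that it can in fact be absorbed into $C_m\|(\varphi,\psi)\|_m^m$. Without this, the Gronwall step fails and one cannot conclude non-explosion in (a).
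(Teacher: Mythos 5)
Your proposal is correct and follows essentially the same route as the paper: jump-by-jump construction from the Poisson drivers with non-explosion via criticality of the offspring law, the Gronwall/localization argument for the $m$-th moment bound where the multiplicative branching rate $\gamma\varphi(x)\psi(x)$ is absorbed site by site into $\|(\varphi,\psi)\|_m^m$ using integrality and the inequality $ab^{m-1}\le a^m+b^m$, and the upgrade of $M^f$ from local to true martingale via the bound $|L^{(2)}f|\le C\|(\varphi,\psi)\|_2^2$ together with the second-moment estimate. The only cosmetic difference is in the non-explosion step, where you truncate by total mass and invoke the maximal inequality for the (super)martingale $\boldsymbol{\xi}+\boldsymbol{\eta}$, whereas the paper applies the martingale convergence theorem to the total particle count observed at the successive jump times; both arguments are standard and equivalent in substance.
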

\begin{proof}
Note that in our proof we follow ideas of the proof of Lemma~1 in Birkner~\cite{Birkner2003}. 

\textbf{a)} $\{N_{x,y}^{RW_{\xi}}\}_{x,y\in S},\,\,\,\{N_{x,y}^{RW_{\eta}}\}_{x,y\in S},\,\,\,\{N_{x,k}^{br_{\xi}}\}_{x\in S,k\in\mathbb{Z}_{+}},\,\,\,\{N_{x,k}^{br_{\eta}}\}_{x\in S,k\in\mathbb{Z}_{+}}$
are a collection of independent Poisson point processes. Therefore
with probability 1 there is no more than one jump simultaneously.
Then we can define a stopping time $T_{1}$ --- the first time a jump
happens. The stopping time $T_1$ is stricly positive almost surely, since $(\xi_{0},\eta_{0})\in E_{fin}\times E_{fin}$, and therefore the indicators $1_{\{\xi_{s-}(y)\geq u\}}, 1_{\{\eta_{s-}(y)\geq u\}}, 1_{\{\gamma\xi_{s-}(x)\eta_{s-}(x)\geq u\}}$ make the rate of the first jump finite. 
Then $(\xi_{t},\eta_{t})=(\xi_{0},\eta_{0})$ for $t\in[0,T_{1})$.
In the same way we define a sequence of stopping times: $0=T_{0}<T_{1}<T_{2}<\cdots<\infty$, and again the rate of jumps is finite since for any $i\geq 1$,  $(\xi_{T_{i}},\eta_{T_{i}})\in E_{fin}\times E_{fin}$ almost surely, and this makes the rate of the $(i+1)$-th jump almost surely finite by the same reasoning as for the first jump. 
Clearly, by construction, the process $(\xi_{t},\eta_{t})$
is constant on the intervals $[T_{i},T_{i+1})$. In order to show
that this construction defines the process properly (that is, it does
not explode in finite time), it is enough to show that $\lim_{n\rightarrow\infty}T_{n}=\infty$,
almost surely. To this end define $M_{n}=\sum_{x\in S}\left(\xi_{T_{n}}(x)+\eta_{T_{n}}(x)\right)$.
$M_{n}$ denotes the total number of particles in both populations
at time $T_{n}$. Since the branching mechanism is critical (see \eqref{eq:variance-assumptions}),
it is easy to see that $\left\{ M_{n}\right\} _{n\geq0}$ is a non-negative
martingale. 

Indeed, suppose that $T_{n}$ is the stopping time originated from  a ``random
walk'' jump, that is, from a jump of one of the processes
$$R_{t}^{\xi,x}= \sum_{y\neq x}\left\{ \intop_{0}^{t}\intop_{\mathbb{R}_{+}}1_{\{\xi_{s-}(y)\geq u\}}N_{y,x}^{RW_{\xi}}(dsdu)-\intop_{0}^{t}\intop_{\mathbb{R}_{+}}1_{\{\xi_{s-}(x)\geq u\}}N_{x,y}^{RW_{\xi}}(dsdu)\right\},  t\geq 0,  x\in S,$$
or 
$$R_{t}^{\eta,x}=
\sum_{y\neq x}\left\{ \intop_{0}^{t}\intop_{\mathbb{R}_{+}}1_{\{\eta_{s-}(y)\geq u\}}N_{y,x}^{RW_{\eta}}(dsdu)-\intop_{0}^{t}\intop_{\mathbb{R}_{+}}1_{\{\eta_{s-}(x)\geq u\}}N_{x,y}^{RW_{\eta}}(dsdu)\right\}, t\geq 0,  x\in S.
$$
 In that case the total
number of particles does not change (this can also be readily seen
from our equation \eqref{eq:main_equation} since $\sum_{x\in S} R_{t}^{\xi,x}=\sum_{x\in S} R_{t}^{\eta,x}=0$) and thus we have $M_{n}=M_{n-1}$.
Alternatively $T_{n}$ can be originated from the ``branching'',
that is from the jump of one of the processes $B_{t}^{\xi}=  \sum_{x\in S}\sum_{k\geq0}\intop_{0}^{t}\intop_{\mathbb{R}_{+}}(k-1)1_{\{\gamma\eta_{s-}(x)\xi_{s-}(x)\geq u\}}N_{x,k}^{br_{\xi}}(dsdu)$ 
or $B_{t}^{\eta}= \sum_{x\in S}\sum_{k\geq0}\intop_{0}^{t}\intop_{\mathbb{R}_{+}}(k-1)1_{\{\gamma\eta_{s-}(x)\xi_{s-}(x)\geq u\}}N_{x,k}^{br_{\eta}}(dsdu)$. In this case one can
easily get that 
\[
\mathbb{E}\left(M_{n}|\, M_{0},...,M_{n-1}\right)=M_{n-1}+\mathbb{E}\left(Z-1\right)=M_{n-1},
\]
where $Z$ is distributed according to the branching law $\nu$.

Therefore by the well known martingale convergence theorem $\sup_{n\geq1}M_{n}<\infty$
almost surely (Theorem 1.6.4 in \cite{IkedaWatanabe}). This implies
that $\sup_{n}T_{n}=\infty$, almost surely.

Now let us turn to the proof of uniqueness. Let $(\tilde{\xi},\tilde{\eta})_{t}$
be another solution to \eqref{eq:main_equation} starting from the
same initial conditions $\tilde{\xi}_{0}=\xi^{0},\,\,\tilde{\eta}_{0}=\eta^{0}$.
We see from \eqref{eq:main_equation} that $\tilde{\xi}_{t}(x)=\xi_{t}(x),\,\,\tilde{\eta}_{t}(x)=\eta_{t}(x)$
for all $x\in S$ and $t\in[0,T_{1})$, and also that $\tilde{\xi}_{T_{1}}=\xi_{T_{1}},\,\,\tilde{\eta}_{T_{1}}=\eta_{T_{1}}$.
Then, by induction, $(\xi,\eta)$ and $(\tilde{\xi},\tilde{\eta})$
agree on $[T_{n},T_{n+1})$ for all $n\in\mathbb{N}$.

\textbf{b)} Poisson processes have independent and stationary increments.
Therefore, by construction described in \textbf{(a)}, we can immediately
see that the distribution of $(\xi_{t+h},\eta_{t+h})$, given $\mathcal{F}_{t}$,
depends only on $(\xi_{t},\eta_{t})$, and hence the process $(\xi_{t},\eta_{t})_{t\geq0}$
is Markov.

\textbf{c)} Now we will show that $(\xi_{t},\eta_{t})_{t\geq0}$ satisfies
\eqref{eq:thm1ineq}. 
Define a  sequence of stopping times
\begin{equation}
\label{eq:tn_loc}
T_{n}:=\inf\left\{ t\geq0\,:\,\begin{array}{c}
\sum_{x\in S}\left(\xi_{t}(x)+\eta_{t}(x)\right)>n
\end{array}\right\},\;n\geq 1. 
\end{equation}
Choose arbitrarily $m\in\mathbb{N}$ such that 
\[
\sum_{k}k^{m}\nu_{k}<\infty.
\]
For $\varphi, \psi\in E_{fin}$, define 
\[
h_{m}(\varphi,\psi):=\left\Vert (\varphi,\psi)\right\Vert _{m}^{m}.
\]
Then, by an appropriate version of  the It\^o formula, we have that $\left\{ M_{t\wedge T_{n}}^{h_{m}}\right\}_{t\geq 0} $
is a martingale. Let $\varphi,\psi\in E_{fin}$. 
Apply the $L^{(2)}$ operator on $h_{m}(\varphi,\psi)$ to get 
\begin{eqnarray*}
L^{(2)}h_{m}(\varphi,\psi) & = & \kappa\sum_{x,y}\varphi(x)p_{xy}\left\{ \left((\varphi(y)+1)^{m}-\varphi(y)^{m}\right)\right.\\
 &  & \left.+\left((\varphi(x)-1)^{m}-\varphi(x)^{m}\right)\right\} \\
 &  & +\kappa\sum_{x,y}\psi(x)p_{xy}\left\{\left((\psi(y)+1)^{m}-\psi(y)^{m}\right)\right.\\
 &  & \left.+\left((\psi(x)-1)^{m}-\psi(x)^{m}\right)\right\} \\
 &  & +\sum_{x}\gamma\varphi(x)\psi(x)\sum_{k\geq 0}\nu_{k}\left\{ (\varphi(x)+k-1)^m-\varphi(x)^{m}\right\} \\
 &  & +\sum_{x}\gamma\varphi(x)\psi(x)\sum_{k\geq 0}\nu_{k}\left\{ (\psi(x)+k-1)^m-\psi(x)^{m}\right\} \\
 & = & \kappa\sum_{x}\varphi(x)\sum_{j=1}^{m}\binom{m}{j}(-1)^{j}\varphi(x)^{m-j}\\
 &  & +\kappa\sum_{x,y}\varphi(x)p_{xy}\sum_{j=1}^{m}\binom{m}{j}\varphi(y)^{m-j}\\
 &  & +\kappa\sum_{x}\psi(x)\sum_{j=1}^{m}\binom{m}{j}(-1)^{j}\psi(x)^{m-j}\\
 &  & +\kappa\sum_{x,y}\psi(x)p_{xy}\sum_{j=1}^{m}\binom{m}{j}\psi(y)^{m-j}\\
 &  & +\sum_{x}\gamma\varphi(x)\psi(x)\sum_{k\geq 0}\nu_{k}\sum_{j=2}^{m}\binom{m}{j}\varphi(x)^{m-j}(k-1)^{j}\\
 &  & +\sum_{x}\gamma\varphi(x)\psi(x)\sum_{k\geq 0}\nu_{k}\sum_{j=2}^{m}\binom{m}{j}\psi(x)^{m-j}(k-1)^{j}.
\end{eqnarray*}
 where in the last equality we used the binomial expansion, the fact that
$\sum_{y}p_{xy}=1$ and our assumptions $\sum_{k\geq0}(k-1)\nu_{k}=0.$
Now we can estimate
\begin{eqnarray}
\nonumber
\left|L^{(2)}h_{m}(\varphi,\psi)\right| & \leq & \kappa\left(\sum_{j=1}^{m}\binom{m}{j}\right)\left(h_{m}(\varphi,\psi)+\sum_{x,y}p_{xy}\left[\varphi(x)^m + \varphi(y)^{m}+\psi(x)^m+ \psi(y)^{m}\right]\right)\\
\label{eq:L2_b1} 
&  & +\left(3\gamma\sum_{j=2}^{m}\binom{m}{j}\sum_{k}\nu_{k}(k-1)^{j}\right)h_{m}(\varphi,\psi)
\end{eqnarray}
where we used the following simple inequalities: for $m\geq j\geq1$,
\[
\varphi(x)^{m-j}\leq\varphi(x)^{m-1}
\]
(recall that $\varphi(x)$ is a non-negative integer number),  and 
$$ ab^{m-1}\leq a^m+ b^m,\; \forall a,b\geq 0. $$
Since, by our assumptions,  $p_{xy}=p_{yx}$ for all $x,y\in S$, we have 
\begin{equation}
\label{eq:L2_b2} 
\sum_{x,y\in S}p_{xy}\left[\varphi(x)^m + \varphi(y)^{m}+\psi(x)^m+ \psi(y)^{m}\right]\leq 4h_{m}(\varphi,\psi). 
\end{equation}
Denote 
\[
c_{m}:=\sum_{j=1}^{m}\binom{m}{j}=2^{m}-1,\,\,\, c'_{m}:=3\gamma\sum_{j=2}^{m}\binom{m}{j}\sum_{k\geq 0}\nu_{k}(k-1)^{j}<\infty.
\]
Then by~\eqref{eq:L2_b1} and \eqref{eq:L2_b2}  we get 
\begin{align}
\left|L^{(2)}h_{m}(\varphi,\psi)\right|  \leq &
(5 \kappa c_{m}+c'_{m})h_{m}(\varphi,\psi)=:C_m h_{m}(\varphi,\psi).\label{eq:Lpsi}
\end{align}
Now recall that for any $n$, $\left\{M^{h_{m}}(t\wedge T_{n})\right\}_{t\geq0}$
is a martingale. Therefore, 
\begin{eqnarray*}
\mathbb{E}\left[h_{m}(\xi_{t\wedge T_{n}},\eta_{t\wedge T_{n}})\right] & = & h_{m}(\xi^{0},\eta^{0})+\mathbb{E}\left[\intop_{0}^{t\wedge T_{n}}L^{(2)}h_{m}(\xi_{s},\eta_{s})ds\right]\\
 & \leq & h_{m}(\xi^{0},\eta^{0})+C_{m}\intop_{0}^{t}\mathbb{E}\left[\mathbf{1}_{\left\{ s\leq T_{n}\right\} }h_{m}(\xi_{s},\eta_{s})\right]ds\\
 & \leq & h_{m}(\xi^{0},\eta^{0})+C_{m}\intop_{0}^{t}\mathbb{E}\left[h_{m}(\xi_{s\wedge T_{n}},\eta_{s\wedge T_{n}})\right]ds.
\end{eqnarray*}
Thus, from Gronwall's lemma we get that
\[
\mathbb{E}\left[h_{m}(\xi_{t\wedge T_{n}},\eta_{t\wedge T_{n}})\right]\leq\exp(C_{m}t)h_{m}(\xi^{0},\eta^{0}),
\]
 uniformly in $n$. It is easy to see from (a) that $T_n\to \infty$ as $n\to\infty$, a.s.  Thus,  inequality \eqref{eq:thm1ineq} follows from Fatou's
lemma by letting $n\rightarrow\infty$. 

\textbf{d)} Let $f\in\mathrm{Lip}(E_{fin}\times E_{fin})$. We wish to
show that $M^{f}$ is indeed a martingale. In order to do that, first
we show that for any such $f$ there is a constant $C=C(\kappa,p,\sigma,\nu,f)$
such that
\begin{equation}
\left|L^{(2)}f(\varphi,\psi)\right|\leq C\left\Vert (\varphi,\psi)\right\Vert _{2}^{2}\,\,\mathrm{for\, all\,\,}\varphi,\psi\in E_{fin}.\label{eq:operatorBound}
\end{equation}
We decompose $L^{(2)}f(\varphi,\psi)$ into two parts corresponding to
motion and branching mechanisms:
\begin{equation}
L^{(2)}f(\varphi,\psi)=L_{RW}f(\varphi,\psi)+L_{br}f(\varphi,\psi),\label{eq:L(2)=00003DL_rw+L_br}
\end{equation}
where 
\begin{align}
L_{RW}f(\varphi,\psi)  = & \kappa\sum_{x,y\in S}\varphi(x)p_{xy}\left(f(\varphi^{x\rightarrow y},\psi)-f(\varphi,\psi)\right)\label{eq:L_rw}\\
 &   +\kappa\sum_{x,y\in S}\psi(x)p_{xy}\left(f(\varphi,\psi^{x\rightarrow y})-f(\varphi,\psi)\right),\nonumber \\
L_{br}f(\varphi,\psi) = & \sum_{x\in S}\gamma\varphi(x)\psi(x)\sum_{k\geq0}\nu_{k}\left(f(\varphi+(k-1)\delta_{x},\psi)-f(\varphi,\psi)\right)\label{eq:L_br}\\
 &   +\sum_{x\in S}\gamma\varphi(x)\psi(x)\sum_{k\geq0}\nu_{k}\left(f(\varphi,\psi+(k-1)\delta_{x})-f(\varphi,\psi)\right).\nonumber 
\end{align}
Using the Lipshitz property of $f$ we obtain
\begin{eqnarray*}
\left|L_{RW}f(\varphi,\psi)\right| & = & \left|\sum_{x,y\in S}\varphi(x)p_{xy}\left(f(\varphi^{(x,y)},\psi)-f(\varphi,\psi)\right)\right.\\
 &  & \left.+\sum_{x,y\in S}\psi(x)p_{xy}\left(f(\varphi,\psi^{(x,y)})-f(\varphi,\psi)\right)\right|\\
 & \leq & C_{f}\sum_{x,y\in S}\varphi(x)p_{xy}\left\Vert (\varphi^{(x,y)},\psi)-(\varphi,\psi)\right\Vert _{1}\\
 &  & +C_{f}\sum_{x,y\in S}\psi(x)p_{xy}\left\Vert (\varphi,\psi^{(x,y)})-(\varphi,\psi)\right\Vert _{1}\\
 & \leq & 2C_{f}\sum_{x,y\in S}\varphi(x)p_{xy}+2C_{f}\sum_{x,y\in S}\psi(x)p_{xy}\\
 & = & 2C_{f}\left\Vert (\varphi,\psi)\right\Vert _{1}\leq 2C_{f}\left\Vert (\varphi,\psi)\right\Vert _{2}^{2}
\end{eqnarray*}
 where in the last inequality we used$\left\Vert (\varphi,\psi)\right\Vert _{1}\leq\left\Vert (\varphi,\psi)\right\Vert _{2}^{2}$,
which holds since the functions $\left\{ \varphi(x)\right\} _{x\in S}$
and $\left\{ \psi(x)\right\} _{x\in S}$ are integer valued. Turning
to $L_{br}f(\varphi,\psi)$ we get
\begin{eqnarray*}
\left|L_{br}f(\varphi,\psi)\right| & = & \left|\sum_{x\in S}\gamma\varphi(x)\psi(x)\sum_{k\geq 0}\nu_{k}\left(\left[f(\varphi+(k-1)\delta_{x},\psi)-f(\varphi,\psi)\right]\right.\right.\\
 &  & \left.\left.+\left[f(\varphi,\psi+(k-1)\delta_{x})-f(\varphi,\psi)\right]\right)\right|\\
 & \leq & C_{f}\sum_{x\in S}\gamma\varphi(x)\psi(x)\sum_{k\geq 0}\nu_{k}\left(2\left\Vert (k-1)\delta_{x}\right\Vert _{1}\right)\\
 & = & \left(2C_{f}\sum_{k\geq 0}\nu_{k}|k-1|\right)\sum_{x\in S}\gamma\varphi(x)\psi(x)\\
 & \leq & 2\gamma C_{f}\sum_{k\geq0}2\nu_{k}|k-1|\cdot\left\Vert (\varphi,\psi)\right\Vert _{2}^2,
\end{eqnarray*}
where in the last inequality we used the fact that $\varphi(x)\psi(x)\leq\varphi(x)^{2}+\psi(x)^{2}$.
Thus \eqref{eq:operatorBound} holds with 
\[
C:=C_{f}(2+2\gamma\sum_{k\geq0}\nu_{k}|k-1|).
\]
Consider a bounded $f\in\mathrm{Lip}(E_{fin}\times E_{fin})$, then $M^{f}$
is a local martingale, so we have for all $t,h\geq0$
\begin{equation}
\mathbb{E}\left[\left.M^{f}\left(\left(t+h\right)\wedge T_{n}\right)\right|\mathcal{F}_{t}\right]=M^{f}\left(\left(t+h\right)\wedge T_{n}\right).\label{eq:localMartProperty},
\end{equation}
where $T_n$ is defined in~\eqref{eq:tn_loc}. 
The right-hand side converges to $M^{f}(t)$ a.s., as $n\rightarrow\infty$.
Then
\[
\mathbb{E}\left[\left.\intop_{0}^{(t+h)\wedge T_{n}}L^{(2)}f(\xi_{s},\eta_{s})ds\right|\mathcal{F}_{t}\right]\rightarrow\mathbb{E}\left[\left.\intop_{0}^{t+h}L^{(2)}f(\xi_{s},\eta_{s})ds\right|\mathcal{F}_{t}\right],\,\,\,\mathrm{a.s},
\]
as $n\rightarrow\infty$. Here we used again the dominated convergence,
since by \eqref{eq:operatorBound} 
\[
\intop_{0}^{(t+h)\wedge T_{n}}L^{(2)}f(\xi_{s},\eta_{s})ds\leq C\intop_{0}^{t+h}\left\Vert (\xi_{s},\eta_{s})\right\Vert _{2}^{2}ds
\]
 and the expectation on the right-hand side of the above inequality
is bounded due to \eqref{eq:thm1ineq} and finite initial conditions, for which clearly $\left\Vert (\xi_0,\eta_0)\right\Vert _{2}^{2}<\infty$. 
Thus $M^{f}$ is
indeed a martingale in the case of bounded $f\in\mathrm{Lip}(E_{fin}\times E_{fin})$.

Next, consider $f\in\mathrm{Lip}(E_{fin}\times E_{fin})$ which is non-negative,
but not necessarily bounded. Define $f_{n}(\varphi,\psi):=f(\varphi,\psi)\wedge n$.
Note that $f_{n}$ is bounded and $f_{n}\in\mathrm{Lip}(E_{fin}\times E_{fin})$
with Lipshitz constant $C_{f_{n}}\leq C_{f}$. As $n\rightarrow\infty$,
we have
\[
\begin{array}{c}
M^{f_{n}}(t)\rightarrow M^{f}(t),\,\,\,\mathrm{a.s}.\\
\mathbb{E}\left[f_{n}(\xi_{t+h},\eta_{t+h})\,|\,\mathcal{F}_{t}\right]\rightarrow\mathbb{E}\left[f(\xi_{t+h},\eta_{t+h})\,|\,\mathcal{F}_{t}\right],\,\,\mathrm{a.s}.
\end{array}
\]
by monotone convergence. Observe that $\left|L^{(2)}f_{n}(\varphi,\eta)\right|\leq C\left\Vert (\varphi,\eta)\right\Vert _{2}^{2}$
uniformly in $n$. We thus obtain
\[
\mathbb{E}\left[\left.\intop_{0}^{t+h}L^{(2)}f_{n}(\xi_{s},\eta_{s})ds\right|\mathcal{F}_{t}\right]\rightarrow\mathbb{E}\left[\left.\intop_{0}^{t+h}L^{(2)}f(\xi_{s},\eta_{s})ds\right|\mathcal{F}_{t}\right]
\]
 a.s. as $n\rightarrow\infty$ by the dominated convergence theorem.
Therefore $M^{f}$ is a martingale for non-negative Lipschitz $f$.
For the general case we use the decomposition of $f\in\mathrm{Lip}(E_{fin}\times E_{fin})$
as $f=f^{+}-f^{-}$, where $f^{+}:=\max(f,0)$ and $f^{-}:=\max(-f,0)$.

The same proof holds for $N^{f}$ too, since $\frac{\partial}{\partial s}f$
~is bounded by \eqref{eq:partial_f_condition}.
\end{proof}

\section{ Proof of Theorem \ref{thm:Coexistance-finite-1}.\label{chap:Coexistence-and-non-coexistence}}


The aim of this section is to prove Theorem \ref{thm:Coexistance-finite-1}. 

Let $(\xi_{t},\eta_{t})$ be the mutually catalytic branching process
described in Theorem~\ref{thm:Coexistance-finite-1}, starting at
$(\xi_{0},\eta_{0})$ with 
with $\boldsymbol{\xi}_{0}+ \boldsymbol{\eta}_{0}<\infty.$
Recall that $\boldsymbol{\xi}_t,\, \boldsymbol{\eta}_t, t\geq 0,$ denote the total size of each
population at time $t$:
\[
\boldsymbol{\xi}_t=\sum_{x\in\mathbb{Z}^{d}}\xi_{t}(x)\,\,\,\mathrm{and}\,\,\, \boldsymbol{\eta}_t=\sum_{x\in\mathbb{Z}^{d}}\eta_{t}(x).
\]



\subsection{Proof of Theorem \ref{thm:Coexistance-finite-1}(a) $-$ Transient
case}



The proof is simple and we decided to avoid technical details. The observation is as follows:  since the motion of particles is transient  and the number of particles  in the original populations is finite,  there
exists almost surely a finite time $\hat T$ such that, if one suppresses the branching,  the initial particles of different
populations never meet after time $\hat T$. On the other hand due to the finiteness of number of particles, the total branching rate in the system is finite and thus there is a positive probability for the event that in the original particle system there is no branching event until time $\hat T$. On this event,  particles of different
populations never meet after time $\hat T$ and therefore there  is a positive probability of survival of both populations. 

\qed

\subsection{Proof of Theorem \ref{thm:Coexistance-finite-1}(b) $-$ Recurrent
case}

We would like to show that
\[
\boldsymbol{\xi}_{\infty}\boldsymbol{\eta}_{\infty}=0,\,\,\,\mathbb{P}-\mathrm{a.s.}
\]
First, recall why $\lim_{t\rightarrow\infty}\boldsymbol{\xi}_{t}\boldsymbol{\eta}_{t}=\boldsymbol{\xi}_{\infty}\boldsymbol{\eta}_{\infty}$
exists. 
By It\^o's  formula it is easy to see that  $\left\{ \boldsymbol{\xi}_{t}\boldsymbol{\eta}_{t}\right\} _{t\geq0}$
is a non-negative local  martingale, that is a non-negative supermartingale.  By the Martingale Convergence theorem
non-negative
supermartingales converge a.s. as time goes to infinity. Hence, 
\[
\lim_{t\rightarrow\infty}\boldsymbol{\xi}_{t}\boldsymbol{\eta}_{t}=\boldsymbol{\xi}_{\infty}\boldsymbol{\eta}_{\infty},\,\,\,\mathbb{P}-\mathrm{a.s.}
\]
Also note that $\left\{\boldsymbol{\xi}_{t}\boldsymbol{\eta}_{t}\right\} _{t\geq0}$ is
an integer-valued supermartingale. Therefore there exists a random time
$T_{0}$ such that 
\begin{equation}
\boldsymbol{\xi}_{t}\boldsymbol{\eta}_{t}=\boldsymbol{\xi}_{\infty}\boldsymbol{\eta}_{\infty}\,\,\mathrm{for\, all}\,\, t\geq T_{0}.\label{eq:Limit dist}
\end{equation}
Now assume that $\boldsymbol{\xi}_{\infty}\boldsymbol{\eta}_{\infty}>0$, that is $\boldsymbol{\xi}_{t}>0$
and $\boldsymbol{\eta}_{t}>0$ for $t\geq T_{0}$. Since the motion is recurrent,
there is probability one for a ``meeting''  of two populations
after time $T_{0}$ at some site. Moreover, on the event $\left\{\boldsymbol{\xi}_{\infty}\boldsymbol{\eta}_{\infty}>0\right\} $,
by recurrence, after time $T_{0}$, two populations spend an infinite
amount of time ``together''. Since the branching rate is at least
$\gamma>0$, when particles of two populations spend time ``together''
on the same site, we immediately get that eventually a branching event
will happen with probability one. However, this is a contradiction
with \eqref{eq:Limit dist}. Therefore, $\boldsymbol{\xi}_{t}=0$ or $\boldsymbol{\eta}_{t}=0$
for all $t\geq T_{0}$, that is one of the populations becomes extinct,
and coexistence is not possible.
\qed

\section{Moment computations for 
 $S=\Lambda_{n}$}

In this section we derive some useful moment estimates for $(\xi_{t},\eta_{t})$
solving \eqref{eq:main_equation} in the case of 
$S=\Lambda_{n}$ for arbitrary $n\geq1$  (recall that $\Lambda_{n}$ is the torus defined in Section
\ref{sub:Background-and-Motivation}). 
These estimates will be very essential for proving Theorem \ref{thm:MainResult-1} in Section~\ref{chap:Finite-System-Scheme}. 

To simplify the notations
we suppress dependence on ``$n$'' in the notations. 
Throughout the section the motion process for a mutually catalytic
process $(\xi_{t},\eta_{t})$ is the nearest neighbor random walk
on 
 $S=\Lambda_{n}$. The transition semigroup
(respectively to transition density $\left\{ p_{t}(\cdot,\cdot)_{t\geq0}\right\} $,
$Q$-matrix) of the motion process will be denoted by $\left\{ P_{t}\right\} _{t\geq0}$.
 Motion process will be
the nearest neighbor random walk on $S$. For the definition of conditional quadratic variation, see~Chapter~III of~\cite{bib:protter04}. For $\psi\in  \mathbb{Z}^S$ and $\varphi\in \mathbb{R}^S$ define the inner product 
$$ \langle \psi, \varphi\rangle := \sum_{x\in S} \psi(x)\varphi(x), $$
whenever the sum is absolutely convergent. 
\begin{lem}
\label{lem:<xi,phi>}Assume that 
$S=\Lambda_{n}$.
Let $(\xi_{0},\eta_{0})\in E_{fin}\times E_{fin}$. If $\varphi:S\rightarrow\mathbb{R}_{+}$,
 then 
\begin{equation}
\left\langle \xi_{t},\varphi\right\rangle =\left\langle \xi_{0},P_{t}\varphi\right\rangle +N_{t}^{\xi}(t,\varphi),\quad\left\langle \eta_{t},\varphi\right\rangle =\left\langle \eta_{0},P_{t}\varphi\right\rangle +N_{t}^{\eta}(t,\varphi),\label{eq:<xi,phi>}
\end{equation}
where
\begin{align}
N_{s}^{\xi}(t,\varphi)  = & \sum_{x\in S}\left(\sum_{y\neq x}\left\{ \intop_{0}^{s}\intop_{\mathbb{R}_{+}}P_{t-r}\varphi(x)1_{\xi_{r-}(y)\geq u}N_{y,x}^{RW_{\xi}}(drdu)\right.\right.\label{eq:N-t-phi-xi}\\
 &   \left.\left.-\intop_{0}^{s}\intop_{\mathbb{R}_{+}}P_{t-r}\varphi(x)1_{\xi_{r-}(x)\geq u}N_{x,y}^{RW_{\xi}}(drdu)\right\} -\intop_{0}^{s}\xi_{r}Q(x)P_{t-r}\varphi(x)dr\right)\nonumber \\
 &   +\sum_{x\in S}\sum_{k\geq0}(k-1)\intop_{0}^{s}\intop_{\mathbb{R}_{+}}P_{t-r}\varphi(x)1_{\{\gamma\eta_{r-}(x)\xi_{r-}(x)\geq u\}}N_{x,k}^{br_{\xi}}(drdu),\,\,\, s\leq t,\nonumber 
\end{align}
 and
\begin{eqnarray*}
N_{s}^{\eta}(t,\varphi) & = & \sum_{x\in S}\left(\sum_{y\neq x}\left\{ \intop_{0}^{s}\intop_{\mathbb{R}_{+}}P_{t-r}\varphi(x)1_{\eta_{r-}(y)\geq u}N_{y,x}^{RW_{\eta}}(drdu)\right.\right.\\
 &  & \left.\left.-\intop_{0}^{s}\intop_{\mathbb{R}_{+}}P_{t-r}\varphi(x)1_{\eta_{r-}(x)\geq u}N_{x,y}^{RW_{\eta}}(drdu)\right\} -\intop_{0}^{s}\eta_{r}Q(x)P_{t-r}\varphi(x)dr\right)\\
 &  & +\sum_{x\in S}\sum_{k\geq0}(
k-1)\intop_{0}^{s}\intop_{\mathbb{R}_{+}}P_{t-r}\varphi(x)1_{\{\gamma\eta_{r-}(x)\xi_{r-}(x)\geq u\}}N_{x,k}^{br_{\eta}}(drdu),\,\,\, s\leq t
\end{eqnarray*}
 are orthogonal  square-integrable $\mathcal{F}_{s}$-martingales
on $s\in[0,t]$ (the series converge in $L^{2}$ uniformly in $s\leq t$)
with conditional quadratic variations  
\begin{align}
\left\langle N_{\cdot}^{\xi}(t,\varphi)\right\rangle _{s}  = & \kappa\sum_{y\in S}\intop_{0}^{s}\xi_{r-}(y)\mathbb{E}\left[(P_{t-r}\varphi(Z+y)-P_{t-r}\varphi(y))^{2}\right]dr+\nonumber \\
 & +  \sigma^{2}\gamma\left(\sum_{x\in S}\intop_{0}^{s}\left(P_{t-r}\varphi(x)\right)^{2}\xi_{r-}(x)\eta_{r-}(x)dr\right),\label{eq:<N--t-phi-xi>}
\end{align}
and
\begin{eqnarray*}
\left\langle N_{\cdot}^{\eta}(t,\varphi)\right\rangle _{s} & = & \kappa\sum_{y\in S}\intop_{0}^{s}\eta_{r-}(y)\mathbb{E}\left[(P_{t-r}\varphi(Z+y)-P_{t-r}\varphi(y))^{2}\right]dr+\\
 & + & \sigma^{2}\gamma\left(\sum_{x\in S}\intop_{0}^{s}\left(P_{t-r}\varphi(x)\right)^{2}\xi_{r-}(x)\eta_{r-}(x)dr\right).
\end{eqnarray*}
Here $Z$ is the random variable distributed as a jump of the nearest
neighbor random walk.\end{lem}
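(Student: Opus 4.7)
The plan is to derive \eqref{eq:<xi,phi>} by applying the time-dependent martingale criterion of Lemma~\ref{lem:Lemma}(d) to the test function $f(s,\varphi_1,\varphi_2):=\langle\varphi_1,P_{t-s}\varphi\rangle$, $s\in[0,t]$. Because $S=\Lambda_n$ is finite, $\varphi$ and $P_{t-s}\varphi$ are bounded uniformly on $[0,t]$, so $f\in\mathrm{Lip}(\mathbb{R}_+\times E_{fin}\times E_{fin})$, and boundedness of $QP_{t-s}\varphi$ yields $|\partial_s f|\leq C\|\varphi_1\|_1\leq C\|(\varphi_1,\varphi_2)\|_2^2$, matching hypothesis \eqref{eq:partial_f_condition}. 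Note that all apparently infinite sums in the statement are in fact finite on the torus, so $L^2$ convergence is automatic.

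The key algebraic step is to check $L^{(2)}f+\partial_s f\equiv 0$. The random-walk part of $L^{(2)}f$ collapses to $\langle\varphi_1,QP_{t-s}\varphi\rangle$ via the identity $\kappa\sum_{x,y}\varphi_1(x)p_{xy}(P_{t-s}\varphi(y)-P_{t-s}\varphi(x))=\langle\varphi_1,QP_{t-s}\varphi\rangle$, which cancels $\partial_s f=-\langle\varphi_1,QP_{t-s}\varphi\rangle$; both branching contributions vanish because $\sum_k(k-1)\nu_k=0$ by criticality. Hence $N^f_s:=\langle\xi_s,P_{t-s}\varphi\rangle-\langle\xi_0,P_t\varphi\rangle$ is a martingale, and evaluation at $s=t$ gives \eqref{eq:<xi,phi>}. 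To recover the explicit Poisson-integral form of $N^\xi_s(t,\varphi)$, I would substitute the SDE \eqref{eq:main_equation} into the integration-by-parts identity $d\langle\xi_s,P_{t-s}\varphi\rangle=\sum_x P_{t-s}\varphi(x)\,d\xi_s(x)-\langle\xi_s,QP_{t-s}\varphi\rangle\,ds$: the RW compensator of $d\xi_s(x)$ cancels the $\partial_s$-drift by symmetry of $Q$, the branching drift vanishes by criticality, and what remains is precisely the combination of compensated RW integrals plus mean-zero branching integrals stated in the lemma. The $\eta$-case is identical by symmetry of the equations.

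Finally, the conditional quadratic variation is computed using orthogonality of the independent Poisson measures: $\langle N^\xi\rangle_s$ decomposes as a sum of QV-contributions from each source PPM. Regrouping the pieces of $N^\xi_s(t,\varphi)$ by source, a single atom of $N^{RW_\xi}_{y,x}$ carries a net jump $P_{t-r}\varphi(x)-P_{t-r}\varphi(y)$ (an ``into $x$'' term plus an ``out of $y$'' term), which under the intensity $\kappa p_{y,x}\xi_{r-}(y)\,dr$ and the identification $\sum_x p_{y,x}(P_{t-r}\varphi(x)-P_{t-r}\varphi(y))^2=\mathbb{E}[(P_{t-r}\varphi(y+Z)-P_{t-r}\varphi(y))^2]$ yields the first term of the stated QV. Each atom of $N^{br_\xi}_{x,k}$ carries a jump $(k-1)P_{t-r}\varphi(x)$ with intensity $\gamma\xi_{r-}(x)\eta_{r-}(x)\nu_k\,dr$, and summing in $k$ via $\sum_k(k-1)^2\nu_k=\sigma^2$ produces the branching term. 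Orthogonality of $N^\xi$ and $N^\eta$ is immediate since they are driven by disjoint families of Poisson measures. The only step that is not pure bookkeeping is promoting from local to $L^2$ martingales, which follows by localizing with the stopping times $T_n$ from \eqref{eq:tn_loc} and applying the $m=2$ moment bound \eqref{eq:thm1ineq} together with boundedness of $P_{t-s}\varphi$ on the finite torus.
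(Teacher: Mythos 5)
Your proposal is correct and takes essentially the same route as the paper, which itself only sketches the argument as ``apply Lemma~\ref{lem:Lemma} and It\^o's formula to $f(s,\xi_s,\eta_s)=\langle\xi_s,P_{t-s}\varphi\rangle$'' and notes that orthogonality follows from independence of the driving Poisson point processes. You have simply supplied the details (verification of \eqref{eq:partial_f_condition}, the cancellation $L^{(2)}f+\partial_s f=0$, the jump-by-jump computation of the quadratic variation, and the localization giving square integrability) that the paper leaves to the reader, and they all check out.
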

\begin{proof}
The proof goes through application of Lemma~\ref{lem:Lemma} and  It\^o's formula to functions in the form of $f(s,\xi_s,\eta_s)=\left\langle \xi_{s},P_{t-s}\varphi\right\rangle$ 
and $f(s,\xi_s,\eta_s)=\left\langle \eta_{s},P_{t-s}\varphi\right\rangle$. The proof is pretty standard and we leave details to the enthusiastic reader.  

In the end the orthogonality of the martingales $N_{\cdot}^{\xi}(t,\varphi)$ and
$N_{\cdot}^{\eta}(t,\psi)$ follows from independence of driving Poisson
point processes.

\end{proof}

\begin{cor} 
\label{cor:E(xi*eta)=00003DE(xi)E(eta)}
Assume 
 $S=\Lambda_{n}$.
Let $(\xi_{0},\eta_{0})\in E_{fin}\times E_{fin}$.
If $\varphi,\psi:S\rightarrow\mathbb{R}_{+}$,
then
\begin{equation}
\mathbb{E}\left(\left\langle \xi_{t},\varphi\right\rangle \right)=\left\langle \xi_{0},P_{t}\varphi\right\rangle ,\,\,\,\,\,\mathbb{E}\left(\left\langle \eta_{t},\psi\right\rangle \right)=\left\langle \eta_{0},P_{t}\psi\right\rangle \label{eq:E<xi,phi>}, \; \forall t\geq 0, 
\end{equation}
and
\begin{equation}
\mathbb{E}\left(\left\langle \xi_{t},\varphi\right\rangle \left\langle \eta_{t},\psi\right\rangle \right)=\left\langle \xi_{0},P_{t}\varphi\right\rangle \left\langle \eta_{0},P_{t}\psi\right\rangle, \; \forall t\geq 0.\label{eq:E(<xi,phi><eta,psi>)}
\end{equation}
\end{cor}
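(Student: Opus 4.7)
The plan is a direct application of Lemma~\ref{lem:<xi,phi>}. For the first-moment identities, write the martingale decomposition
\[
\left\langle \xi_{t},\varphi\right\rangle =\left\langle \xi_{0},P_{t}\varphi\right\rangle +N_{t}^{\xi}(t,\varphi),
\]
and similarly for $\eta$. Since $N^{\xi}_{\cdot}(t,\varphi)$ is a square-integrable martingale on $[0,t]$ with $N_{0}^{\xi}(t,\varphi)=0$, taking expectations yields \eqref{eq:E<xi,phi>} at once. The analogous computation for $\eta$ gives the second equation of \eqref{eq:E<xi,phi>}.

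For the product identity, the plan is to multiply the two decompositions and show that every cross term has vanishing expectation. Writing
\[
\left\langle \xi_{t},\varphi\right\rangle \left\langle \eta_{t},\psi\right\rangle
=\bigl(\left\langle \xi_{0},P_{t}\varphi\right\rangle +N_{t}^{\xi}(t,\varphi)\bigr)\bigl(\left\langle \eta_{0},P_{t}\psi\right\rangle +N_{t}^{\eta}(t,\psi)\bigr),
\]
I would take expectations and handle each summand separately. The two single-martingale cross terms vanish since $\mathbb{E}[N_{t}^{\xi}(t,\varphi)]=\mathbb{E}[N_{t}^{\eta}(t,\psi)]=0$. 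The only nontrivial step is to argue that $\mathbb{E}\bigl[N_{t}^{\xi}(t,\varphi)\,N_{t}^{\eta}(t,\psi)\bigr]=0$.

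This last step is the main (but small) obstacle, and it is resolved via the orthogonality asserted at the end of Lemma~\ref{lem:<xi,phi>}: because the Poisson point processes driving $N^{\xi}_{\cdot}(t,\varphi)$ and $N^{\eta}_{\cdot}(t,\psi)$ are mutually independent, the predictable cross quadratic variation $\langle N^{\xi}(t,\varphi),N^{\eta}(t,\psi)\rangle$ vanishes identically. Hence the product $N_{s}^{\xi}(t,\varphi)\,N_{s}^{\eta}(t,\psi)$ is itself a martingale in $s\in[0,t]$ (integrability following by Cauchy--Schwarz from the square-integrability established in Lemma~\ref{lem:<xi,phi>}), starting from zero, which gives the desired cancellation. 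Combining the four terms yields \eqref{eq:E(<xi,phi><eta,psi>)}. \qed
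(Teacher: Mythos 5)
Your proposal is correct and follows essentially the same route as the paper: both use the martingale decomposition of Lemma~\ref{lem:<xi,phi>}, kill the single-martingale cross terms by the martingale property, and kill the product term by the orthogonality of $N^{\xi}$ and $N^{\eta}$. Your additional remark that the vanishing cross quadratic variation makes $N_{s}^{\xi}(t,\varphi)N_{s}^{\eta}(t,\psi)$ itself a martingale is exactly the justification the paper leaves implicit.
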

\begin{proof}
Since $\Lambda_n$ is finite, \eqref{eq:E<xi,phi>}
follows immediately from Lemma~\ref{lem:<xi,phi>}.

As for \eqref{eq:E(<xi,phi><eta,psi>)}, recalling again that $\Lambda_n$ is finite,
 from Lemma \ref{lem:<xi,phi>} we get
\begin{eqnarray*}
\mathbb{E}\left(\left\langle \xi_{t},\varphi\right\rangle \left\langle \eta_{t},\psi\right\rangle \right) & = & \mathbb{E}\left(\left[\left\langle \xi_{0},P_{t}\varphi\right\rangle +N_{t}^{\xi}(t,\varphi)\right]\left[\left\langle \eta_{0},P_{t}\psi\right\rangle +N_{t}^{\eta}(t,\psi)\right]\right)\\
 & = & \left\langle \xi_{0},P_{t}\varphi\right\rangle \left\langle \eta_{0},P_{t}\psi\right\rangle +\left\langle \eta_{0},P_{t}\psi\right\rangle \mathbb{E}\left(N_{t}^{\xi}(t,\varphi)\right)\\
 &  & +\left\langle \xi_{0},P_{t}\varphi\right\rangle \mathbb{E}\left(N_{t}^{\eta}(t,\psi)\right)+\mathbb{E}\left(N_{t}^{\xi}(t,\varphi)N_{t}^{\eta}(t,\psi)\right)\\
 & = & \left\langle \xi_{0},P_{t}\varphi\right\rangle \left\langle \eta_{0},P_{t}\psi\right\rangle ,
\end{eqnarray*}
where the second and the third terms on the right-hand side equal
to zero since $N_{t}^{\xi}(t,\varphi)$ and $N_{t}^{\eta}(t,\psi)$ are
martingales, and the last term vanishes because of the orthogonality
of $N_{t}^{\xi}(t,\varphi)$ and $N_{t}^{\eta}(t,\psi)$. 
\end{proof}
Let $B\subseteq S$ be an arbitrary finite (bounded) subset, and let
$\left|B\right|$ denote a number of sites in $B$.

Now we are ready to compute the expected value and variance of a number
of particles (each population separately) at a site $x\in S$ and
at set $B$.
\begin{cor}
\label{lem:first-moment}Assume 
$S=\Lambda_{n}$.
Let $\xi_{0}(x)\equiv v,\,\,\eta_{0}(x)\equiv u\,\,\,\forall x\in S$, where  $(v,u)\in \mathbb{N}_{0}^2$. 
Then,
\[
\mathbb{E}(\xi_{t}(x))=v,\,\,\,\,\mathbb{E}(\eta_{t}(x))=u,  \,\,\,\,\mathrm{and}\,\,\,\,  \mathbb{E}(\xi_{t}(x)\eta_{t}(y))=vu\,\,\,\forall x,y\in S,\,\forall t\geq0.
\]
Moreover, for any finite $B\subset S$,
\[
\mathbb{E}(\xi_{t}(B))=v\left|B\right|,\,\,\,\,\mathrm{and}\,\,\,\,\mathbb{E}(\eta_{t}(B))=u\left|B\right|,\,\,\,\forall t>0.
\]
\end{cor}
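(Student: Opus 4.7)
The plan is to reduce everything to Corollary \ref{cor:E(xi*eta)=00003DE(xi)E(eta)} by choosing indicator test functions and then exploiting the fact that the transition kernel of the nearest neighbor random walk on the finite torus $\Lambda_n$ is symmetric (and hence doubly stochastic). All three identities should follow by elementary algebra once the test functions are chosen correctly; there is no serious obstacle here since the initial configuration is spatially constant, so the semigroup acts trivially on it.

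First, I would record the key consequence of symmetry: since $p_{x,y}=p_{y,x}$, the transition density $p_t(\cdot,\cdot)$ is also symmetric, and therefore $\sum_{y\in\Lambda_n}p_t(y,x)=\sum_{y\in\Lambda_n}p_t(x,y)=1$ for every $x\in\Lambda_n$ and $t\geq 0$. Equivalently, $P_t\mathbf{1}=\mathbf{1}$ and, by symmetry, the adjoint semigroup also preserves the constant function.

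Next, for a single site $x\in S$, I would apply \eqref{eq:E<xi,phi>} with $\varphi=\delta_x$, so that $\langle\xi_t,\delta_x\rangle=\xi_t(x)$. Since $\xi_0\equiv v$,
\begin{equation*}
\mathbb{E}(\xi_t(x))=\langle\xi_0,P_t\delta_x\rangle=v\sum_{y\in\Lambda_n}p_t(y,x)=v,
\end{equation*}
and analogously $\mathbb{E}(\eta_t(x))=u$. For the mixed moment at sites $x,y\in S$, I would apply \eqref{eq:E(<xi,phi><eta,psi>)} with $\varphi=\delta_x$ and $\psi=\delta_y$ to obtain
\begin{equation*}
\mathbb{E}(\xi_t(x)\eta_t(y))=\langle\xi_0,P_t\delta_x\rangle\langle\eta_0,P_t\delta_y\rangle=v\cdot u=vu.
\end{equation*}

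Finally, for an arbitrary finite $B\subseteq S$, I would take $\varphi=\mathbf{1}_B$ in \eqref{eq:E<xi,phi>}. Then $\langle\xi_t,\mathbf{1}_B\rangle=\xi_t(B)$ and, again using symmetry of $p_t$,
\begin{equation*}
\mathbb{E}(\xi_t(B))=\langle\xi_0,P_t\mathbf{1}_B\rangle=v\sum_{y\in\Lambda_n}\sum_{x\in B}p_t(y,x)=v\sum_{x\in B}1=v|B|,
\end{equation*}
and likewise $\mathbb{E}(\eta_t(B))=u|B|$. Alternatively one could simply sum the first identity $\mathbb{E}(\xi_t(x))=v$ over $x\in B$ using linearity of expectation. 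Either route completes the proof, and the only nontrivial input is the symmetry assumption \eqref{eq:sym_p} together with the finiteness of $\Lambda_n$ (which justifies interchanging the sums without any integrability issue).
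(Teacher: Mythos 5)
Your proof is correct and follows essentially the same route as the paper, which simply states that the result follows from Corollary~\ref{cor:E(xi*eta)=00003DE(xi)E(eta)}; you have merely filled in the details (choice of $\varphi=\delta_x$, $\varphi=\mathbf{1}_B$, and the symmetry/double-stochasticity of $p_t$ on the finite torus) that the paper leaves to the reader.
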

\begin{proof}
The result follows easily from  
Corollary~\ref{cor:E(xi*eta)=00003DE(xi)E(eta)}. 
\end{proof}
Before we treat the second moments of $\xi_{t}(x)$ and $\eta_{t}(x)$,
let us prove a simple technical lemma. Recall that $g_{t}(\cdot,\cdot)$
is the Green function defined in \eqref{eq:green-f-def} (for the
nearest neighbor random walk on 
 $S=\Lambda_{n}$).
Note that 
whenever the motion process is the nearest neighbor random walk, we
have $g_{t}(x,y)=g_{t}(x-y)$ (with certain abuse of notation).
\begin{lem}
\label{lem:g0-ge1}Assume 
 $S=\Lambda_{n}$,
and the motion process is a nearest neighbor random walk on $S$.
For every $x\in S$: 
\[
g_{t}(x)-\frac{1}{2d}\sum_{i=1}^{d}\left[g_{t}(x+e_{i})+g_{t}(x-e_{i})\right]=\frac{1}{\kappa}(\delta_{x,0}-p_{t}(x)),\,\,\forall t\geq0,
\]
where $\delta_{x,y}=1$ iff $x=y$ and $\delta_{x,y}=0$ otherwise.\end{lem}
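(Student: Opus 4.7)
The plan is to derive the identity by integrating the Kolmogorov forward equation for the transition density $p_t$ of the symmetric nearest neighbor random walk on the torus $\Lambda_n$. Since the walk is symmetric (both in jump probabilities and under reflection), the forward and backward equations coincide, which is what makes the Laplacian on the left-hand side of the claim appear naturally.

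First I would recall the explicit form of the generator $Q$ acting on $f:\Lambda_n\to\mathbb{R}$:
\[
Qf(x)=\kappa\sum_{y}p_{x,y}\bigl(f(y)-f(x)\bigr)=\kappa\left(\frac{1}{2d}\sum_{i=1}^d \bigl[f(x+e_i)+f(x-e_i)\bigr]-f(x)\right).
\]
Next I would write down the Kolmogorov equation for the heat kernel: treating $p_{\cdot}(x)=p_{\cdot}(0,x)$ as a function of $t$, one has
\[
\frac{\partial}{\partial t} p_t(x) = Q p_t(x), \qquad p_0(x)=\delta_{x,0}.
\]
Integrating this ODE in time from $0$ to $t$ and using the definition $g_t(x)=\int_0^t p_s(x)\,ds$ together with linearity gives
\[
p_t(x)-\delta_{x,0}=\int_0^t Q p_s(x)\,ds = \kappa\left(\frac{1}{2d}\sum_{i=1}^d\bigl[g_t(x+e_i)+g_t(x-e_i)\bigr]-g_t(x)\right).
\]
Rearranging and dividing by $\kappa$ yields precisely the claimed identity.

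The argument is essentially one line once the Kolmogorov equation is written down; the only mild subtlety to address is justifying the interchange of the time integral with the finite sum inside $Q$, but on the finite torus $\Lambda_n$ this is trivial since $Qp_s(x)$ is a finite linear combination of continuous functions of $s$. A brief remark that the identity actually holds for any symmetric $Q$-matrix on a countable state space (upon replacing the nearest-neighbor average by $\sum_y p_{x,y}g_t(y)$) could be inserted, but the formulation in the lemma is what is needed in the sequel, so I would stop there.
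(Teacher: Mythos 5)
Your proposal is correct and follows essentially the same route as the paper: the paper likewise writes the Kolmogorov equation $\partial_t p_t(x)=Qp_t(x)$ with $Q$ made explicit for the nearest-neighbor walk, integrates over $[0,t]$ to convert $p_s$ into $g_t$, and uses $p_0(x)=\delta_{x,0}$. No gaps.
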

\begin{proof}
The proof follows a standard procedure, using the evolution equation for the transition densities of a continuous time nearest neighbor random walk. As we could not find an exact reference, we have included the derivation here for completeness.

The evolution of the transition probabilities of a continuous-time Markov
chain is governed by the first-order differential equation
\[
\frac{\partial}{\partial t}p_{t}(x)=Qp_{t}(x),\,\,\,\forall t\geq0,x\in S.
\]
In our case $Q=(q_{xy})_{x,y\in S}$ where 
\[
q_{xx}=-\kappa,\,\, q_{xy}=\frac{1}{2d}\kappa\,\,\mathrm{for}\,\, x\in S,\, y=x\pm e_{i},\, i=1,...,d\,\,\mathrm{and}\,\, q_{xy}=0\,\,\mathrm{otherwise}.
\]
Therefore, we have:
\begin{eqnarray*}
\frac{\partial}{\partial t}p_{t}(x) 
 & = & \sum_{y\in S}q_{xy}p_{t}(y)=q_{xx}(p_{t}(x)-\frac{1}{2d}\sum_{i=1}^{d}\left(p_{t}(x+e_{1})+p_{t}(x-e_{1})\right))\\
 & = & -\kappa\left(p_{t}(x)-\frac{1}{2d}\sum_{i=1}^{d}\left[p_{t}(x+e_{i})+p_{t}(x-e_{i})\right]\right).
\end{eqnarray*}
Now, integrate both sides over the interval $[0,t]$, gives
\begin{eqnarray*}
p_{t}(x)-p_{0}(x) 
 & = & -\kappa\intop_{0}^{t}(p_{s}(x)-\frac{1}{2d}\sum_{i=1}^{d}\left[p_{s}(x+e_{i})+p_{s}(x-e_{i})\right])ds\\
 & = & -\kappa(g_{t}(x)-\frac{1}{2d}\sum_{i=1}^{d}\left[g_{t}(x+e_{i})+g_{t}(x-e_{i})\right]).
\end{eqnarray*}
The result follows immediately once we recall that $p_0(x)=\delta_{x,0}$. 

\end{proof}
Now we are ready to handle the second moments of $\xi_{t}(x)$ and
$\eta_{t}(x)$.
\begin{lem}
\label{lem:Second-moment}Assume 
 $S=\Lambda_{n}$.
Let $\xi_{0}(x)\equiv v,\,\,\eta_{0}(x)\equiv u$ for all $x\in S$, where $(v,u)\in \mathbb{N}_{0}^2$. 
Then, for all $t\geq0$,
\begin{equation}
\mathbb{E}(\xi_{t}(x)^{2})=v^{2}+\frac{1}{2}\sigma^{2}\gamma uvg_{2t}(0)+v(1-p_{2t}(0)),\label{eq:*}
\end{equation}
and
\begin{equation}
\mathbb{E}(\eta_{t}(x)^{2})=u^{2}+\frac{1}{2}\sigma^{2}\gamma uvg_{2t}(0)+u(1-p_{2t}(0)).\label{eq:**}
\end{equation}
\end{lem}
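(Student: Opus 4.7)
The plan is to apply Lemma~\ref{lem:<xi,phi>} with $\varphi = \delta_x$. Since $\xi_0 \equiv v$ on the torus and $\sum_y p_t(y,x) = 1$ by symmetry, we have $\langle \xi_0, P_t\delta_x\rangle = v$, so
\begin{equation*}
\xi_t(x) = v + N_t^{\xi}(t,\delta_x),
\end{equation*}
where $s \mapsto N_s^{\xi}(t,\delta_x)$ is a square-integrable martingale vanishing at $s = 0$. Squaring and taking expectation gives $\mathbb{E}[\xi_t(x)^2] = v^2 + \mathbb{E}\bigl[\langle N^{\xi}(t,\delta_x)\rangle_t\bigr]$, and by~\eqref{eq:<N--t-phi-xi>} the quadratic variation splits into a \emph{branching} piece and a \emph{random walk} piece, which I would treat separately.

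For the branching piece, Corollary~\ref{cor:E(xi*eta)=00003DE(xi)E(eta)} with constant initial data yields $\mathbb{E}[\xi_{r-}(z)\eta_{r-}(z)] = uv$ for every $z \in S$. Using $P_{t-r}\delta_x(z) = p_{t-r}(z,x) = p_{t-r}(x,z)$ together with Chapman--Kolmogorov and symmetry, $\sum_z p_{t-r}(x,z)^2 = p_{2(t-r)}(0)$. Consequently the branching contribution evaluates to
\begin{equation*}
\sigma^{2}\gamma uv \int_0^t p_{2(t-r)}(0)\, dr = \frac{1}{2}\sigma^{2}\gamma uv\, g_{2t}(0),
\end{equation*}
via the substitution $s = 2(t-r)$.

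For the random walk piece, I would use $\mathbb{E}[\xi_{r-}(y)] = v$ and expand the square $(p_{t-r}(y+Z,x) - p_{t-r}(y,x))^2$. Translation invariance on the torus together with Chapman--Kolmogorov and symmetry of $p$ give $\sum_y p_s(y + e_i, x)^2 = p_{2s}(0)$ and $\sum_y p_s(y+e_i, x)\, p_s(y,x) = p_{2s}(e_i)$. Averaging over the jump $Z$ (uniform on $\{\pm e_i : 1 \le i \le d\}$) reduces the random walk part to
\begin{equation*}
2\kappa v \int_0^t \left[ p_{2(t-r)}(0) - \frac{1}{d}\sum_{i=1}^d p_{2(t-r)}(e_i) \right] dr.
\end{equation*}
The substitution $s = 2(t-r)$ converts this into $\kappa v\bigl[g_{2t}(0) - \frac{1}{d}\sum_i g_{2t}(e_i)\bigr]$, which by Lemma~\ref{lem:g0-ge1} at $x = 0$ (together with $g_{2t}(e_i) = g_{2t}(-e_i)$) equals $v(1 - p_{2t}(0))$.

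Summing the three contributions yields~\eqref{eq:*}; the identity~\eqref{eq:**} for $\eta$ follows by the analogous argument with the roles of $\xi$ and $\eta$ exchanged, the branching piece being symmetric in $u,v$ and thus unchanged. The main subtlety, rather than a genuine obstacle, is recognizing that the Green function combination produced by the random walk variance computation is \emph{precisely} the left-hand side of Lemma~\ref{lem:g0-ge1}, so that the lemma collapses it to the clean term $v(1 - p_{2t}(0))$; the rest is routine bookkeeping.
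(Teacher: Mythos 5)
Your proposal is correct and follows essentially the same route as the paper: apply Lemma~\ref{lem:<xi,phi>} with $\varphi=\delta_x$, reduce $\mathbb{E}[\xi_t(x)^2]-v^2$ to the expected quadratic variation, evaluate the branching term via $\sum_z p_{t-r}(x,z)^2=p_{2(t-r)}(0)$ to get $\tfrac12\sigma^2\gamma uv\,g_{2t}(0)$, and collapse the random-walk term to $v(1-p_{2t}(0))$ via Lemma~\ref{lem:g0-ge1}. The only cosmetic difference is that you average over the jump $Z$ before summing over $y$, whereas the paper expands the same quadratic variation into three double sums; the computations coincide.
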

\begin{proof}
We will prove only \eqref{eq:*}, since the proof of \eqref{eq:**}
is the same. Again we use the representation of the process from Lemma
\ref{lem:<xi,phi>}, with $\varphi\left(\cdot\right)=\delta_{x}\left(\cdot\right)$
and use notation $\phi_{r}(\cdot)=p_{t-r}\delta_{x}(\cdot)=p_{t-r}(\cdot-x).$

For $0\leq s\leq t$ denote 
\[
N_{s}^{\xi}(y)=N_{s}^{\xi}(t,y)=N_{s}^{\xi}(t,\delta_{y}).
\]

\begin{eqnarray*}
\mathbb{E}(\xi_{t}(x)^{2})
 & = & \left(P_{t}\xi_{0}(x)\right)^{2}+\mathbb{E}\left(\left(N_{t}^{\xi}(x)\right)^{2}\right)\\
 & = & v^{2}+\mathbb{E}\left(\left\langle N_{\cdot}^{\xi}(x)\right\rangle _{t}\right)\\
 & = & v^{2}+v\kappa\sum_{y\in S}\intop_{0}^{t}\sum_{z\in S}(\phi_{r}(z)-\phi_{r}(y))^{2}p{}_{y,z}dr\\
 &  & +\sigma^{2}\gamma uv\sum_{y\in S}\intop_{0}^{t}\phi_{r}(y)^{2}dr\\
 & =: & v^{2}+v\kappa J_{1}(t)+\sigma^{2}\gamma uv J_{2}(t),\,\,\,\,\,\,\,\,\,\,\,\,\,\,\,\,\,\,\,\,\,\,\,\,\,\,\,\,\,\,\,\,\,\,\,\, t\geq0
\end{eqnarray*}
where in the third equality we used again Lemma \ref{lem:<xi,phi>},  the Fubini theorem, and  Corollary~\ref{cor:E(xi*eta)=00003DE(xi)E(eta)} which implies 
$\mathbb{E}(\xi_{r-}(y)\eta_{r-}(y))=P_{t}\xi_{0}(y)P_{t}\eta_{0}(y)=vu$, $\mathbb{E}(\xi_{r-}(y))=P_{t}\xi_{0}(y)=v.$
 Now we will compute each term separately.

First, let us evaluate $J_{2}(t)$. Recall that $\phi_{r}(y)=p_{t-r}(y-x)$.
Then we have
\begin{align}
J_{2}(t)  = & 
\intop_{0}^{t}\sum_{y\in S}p_{t-r}(y-x)^{2}dr
=\intop_{0}^{t}p_{2(t-r)}(0)dr
=\frac{1}{2}\intop_{0}^{2t}p_{\tau}(0)d\tau=0.5g_{2t}(0),\label{eq:J2}
\end{align}
where $p_{s}(x,x)=p_{s}(0,0)=p_{s}(0),$ for all $x\in S$.

Now we will handle $J_{1}(t)$:
\begin{align}
J_{1}(t) & =  \sum_{y\in S}\intop_{0}^{t}\sum_{z\in S}(\phi_{r}(z)-\phi_{r}(y))^{2}p{}_{y,z}dr\label{eq:p_s^2_calc}\\
 & =  \sum_{y\in S}\sum_{z\in S}\intop_{0}^{t}p_{t-r}(z-x)^{2}p_{y,z}dr+\sum_{y\in S}\sum_{z\in S}\intop_{0}^{t}p_{t-r}(y-x)^{2}p_{y,z}dr\nonumber \\
 &   -2\sum_{y\in S}\sum_{z\in S}\intop_{0}^{t}p_{t-r}(z-x)p_{t-r}(y-x)p_{y,z}dr.\nonumber 
\end{align}
We will treat each of the three terms above separately.
For the first term we have
\begin{align}
\sum_{y\in S}\sum_{z\in S}\intop_{0}^{t}p_{t-r}(z,x)^{2}p_{y,z}dr 
 & =  \sum_{z\in S}\intop_{0}^{t}p_{t-r}(z,x)^{2}dr
= 0.5g_{2t}(0)\label{eq:J11}
\end{align}
where the last equality follows as in \eqref{eq:J2}.

Similarly we get

\begin{align}
\sum_{y\in S}\sum_{z\in S}\intop_{0}^{t}p_{t-r}(y,x)^{2}p_{y,z}dr 
&=0.5g_{2t}(0).\label{eq:J12}
\end{align}
 Finally it is easy to obtain
\begin{multline}
\sum_{y\in S}\sum_{z\in S}\intop_{0}^{t}p_{t-r}(z,x)p_{t-r}(y,x)p_{y,z}dr\\
\begin{aligned}
&=0.5\frac{1}{2d}\sum_{i=1}^{d}\left[g_{2t}(e_{i})+g_{2t}(-e_{i})\right]=\frac{1}{2d}\sum_{i=1}^{d}g_{2t}(e_{i}).
\end{aligned}
\label{eq:J13}
\end{multline}
By putting \eqref{eq:J11}, \eqref{eq:J12}, \eqref{eq:J13} and \eqref{eq:J2}
together we have 
\begin{equation}
\mathbb{E}(\xi_{t}(x)^{2})=v^{2}+\sigma^{2}\gamma uv\frac{1}{2}g_{2s}(0)+v\kappa(g_{2t}(0)-\frac{1}{2d}\sum_{i=1}^{d}g_{2t}(e_{i})),\,\, t\geq0,\,\, x\in S.\label{eq:E(xi^2)}
\end{equation}
Now use \eqref{eq:E(xi^2)} and Lemma \ref{lem:g0-ge1} with $x=y$
to get 
\[
\mathbb{E}(\xi_{t}(x)^{2})=v^{2}+\frac{1}{2}\sigma^{2}\gamma uvg_{2t}(0)+v(1-p_{2t}(0)),\,\,\,\forall t\geq0,x\in S.
\]

\end{proof}
We will also need to evaluate $\mathbb{E}(\xi_{t}(x)\xi_{t}(y))$
for $x\neq y$. To this end we will prove the following lemma. 
\begin{lem}
\label{lem:mixed moment}Assume 
 $S=\Lambda_{n}$.
Let $\xi_{0}(x)\equiv v,\,\,\eta_{0}(x)\equiv u\,\,\,\forall x\in S$, where  $(v,u)\in \mathbb{N}_{0}^2$. 
Let $x\neq y$, then
\[
\mathbb{E}(\xi_{t}(x)\xi_{t}(y))=v^{2}-vp_{2t}(x-y)+\frac{1}{2}\sigma^{2}\gamma uvg_{2t}(x-y),\,\,\,\forall t\geq0,
\]
and
\[
\mathbb{E}(\eta_{t}(x)\eta_{t}(y))=u^{2}-up_{2t}(x-y)+\frac{1}{2}\sigma^{2}\gamma uvg_{2t}(x-y),\,\,\,\forall t\geq0.
\]
\end{lem}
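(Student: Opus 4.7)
The plan is to parallel the proof of Lemma \ref{lem:Second-moment}, replacing the quadratic variation computation by a predictable cross-variation one. By Lemma \ref{lem:<xi,phi>} applied with $\varphi=\delta_x$ and with $\varphi=\delta_y$, and since $P_t\xi_0\equiv v$, one has
\[
\xi_t(x)=v+N^{\xi}_t(t,\delta_x),\qquad \xi_t(y)=v+N^{\xi}_t(t,\delta_y).
\]
Multiplying these two identities, taking expectation, and using that $N^{\xi}_\cdot(t,\delta_x)$ and $N^{\xi}_\cdot(t,\delta_y)$ are mean-zero martingales to kill the cross-terms, I obtain
\[
\mathbb{E}[\xi_t(x)\xi_t(y)]=v^{2}+\mathbb{E}\bigl[\langle N^{\xi}(t,\delta_x),N^{\xi}(t,\delta_y)\rangle_t\bigr].
\]
Both martingales are driven by the \emph{same} family of Poisson random measures associated with the $\xi$-population (only the weight functions differ), so their predictable cross-variation is the polarized analogue of \eqref{eq:<N--t-phi-xi>}, obtained through $\langle M,N\rangle=\tfrac14(\langle M+N\rangle-\langle M-N\rangle)$.

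After taking expectation and invoking Corollary \ref{cor:E(xi*eta)=00003DE(xi)E(eta)}, which yields $\mathbb{E}\xi_{r-}(z)=v$ and $\mathbb{E}[\xi_{r-}(z)\eta_{r-}(z)]=uv$, the branching piece of this cross-variation reduces to
\[
\sigma^{2}\gamma uv\int_0^{t}\sum_{z\in S}p_{t-r}(z-x)\,p_{t-r}(z-y)\,dr=\sigma^{2}\gamma uv\int_0^{t}p_{2(t-r)}(x-y)\,dr=\tfrac12\sigma^{2}\gamma uv\,g_{2t}(x-y),
\]
by Chapman--Kolmogorov and the same substitution as in \eqref{eq:J2}. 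The random walk piece, after expanding the polarized integrand
\[
\bigl(p_{t-r}(Z+z-x)-p_{t-r}(z-x)\bigr)\bigl(p_{t-r}(Z+z-y)-p_{t-r}(z-y)\bigr)
\]
into four summands and using $\sum_w p_{z,w}=1$ together with the symmetry $p_{z,w}=p_{w,z}$ exactly as in \eqref{eq:p_s^2_calc}--\eqref{eq:J13}, collapses to
\[
\kappa v\Bigl[g_{2t}(x-y)-\tfrac{1}{2d}\sum_{i=1}^{d}\bigl(g_{2t}(x-y+e_i)+g_{2t}(x-y-e_i)\bigr)\Bigr].
\]

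Since $x\neq y$, Lemma \ref{lem:g0-ge1} applied at the point $x-y$ identifies the bracketed quantity as $-\tfrac{1}{\kappa}p_{2t}(x-y)$, so the random walk contribution equals $-vp_{2t}(x-y)$. Summing the two pieces yields the claimed formula for $\mathbb{E}[\xi_t(x)\xi_t(y)]$. The $\eta$ statement is proved identically: the branching part is symmetric in $u$ and $v$, whereas the random walk part sees only the initial density of the population being covaried, which explains why $v$ is replaced by $u$ only in the last term. The main technical subtlety is justifying the polarization step for the predictable cross-variation driven by the common Poisson random measures, and then carrying out the four-term algebraic expansion in the random walk part cleanly enough to recognize the Green-function combination that Lemma \ref{lem:g0-ge1} immediately resolves.
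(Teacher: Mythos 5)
Your proposal is correct and follows exactly the route the paper intends: the paper omits this proof, stating only that it "goes along similar lines" as Lemma \ref{lem:Second-moment}, and your argument is precisely that adaptation, with the quadratic variation replaced by the polarized cross-variation of $N^{\xi}(t,\delta_x)$ and $N^{\xi}(t,\delta_y)$ and Lemma \ref{lem:g0-ge1} applied at $x-y\neq 0$ so that the $\delta_{x-y,0}$ term drops out. The computations of both the branching and random-walk pieces check out against \eqref{eq:J2}--\eqref{eq:J13}.
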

\begin{proof} 
The proof goes along similar lines as the proof of Lemma~\ref{lem:Second-moment} and thus is omitted. 

\end{proof}

\section{Proof of Theorem \ref{thm:MainResult-1}\label{chap:Finite-System-Scheme}}

Let $\left(\xi_{t}^{n},\eta_{t}^{n}\right)$ be a pair of processes
solving \eqref{eq:main_equation} with site space $S=\Lambda_{n}$,
and $N_{x,y}^{RW_{\xi}}$, $N_{x,y}^{RW_{\eta}}$ being Poisson point
processes with intensity measure $\mbox{\ensuremath{q^{n}(x,y)ds\otimes du}}$,
$q^{n}$ is defined by \eqref{eq:transition_kernel}. $\left\{ p_{x,y}^{n}\right\} _{x,y\in\Lambda_{n}}$
are the transition jump probabilities of the underlying random walk, and $\left\{ P_{t}^{n}\right\} _{t\geq0}$
is the associated semigroup. In what follows we assume $d\geq 3$. 

Fix $(\theta_{1},\theta_{2}) \in \mathbb{N}_{0}^2$. Assume the following initial conditions
for $\left(\xi_{t}^{n},\eta_{t}^{n}\right)$ : 
\[
\xi_{0}^{n}(x)=\theta_{1}\,\,\,\eta_{0}^{n}(x)=\theta_{2}\,\,\,\forall x\in\Lambda_{n}.
\]
Set

\begin{equation}
\boldsymbol{\xi}_{t}^{n}=\sum_{j\in\Lambda_{n}}\xi_{j}^{n}(t),\,\,\,\,\,\,\boldsymbol{\eta}_{t}^{n}=\sum_{j\in\Lambda_{n}}\eta_{j}^{n}(t).\label{eq:xi_N_def}
\end{equation}
We define the following time change:
\[
\beta_{n}(t)=\left|\Lambda_{n}\right|t,\,\,\, t\geq0.
\]
Theorem \ref{thm:MainResult-1} identifies the limiting distribution
of
\[
\frac{1}{\left|\Lambda_{n}\right|}\left(\boldsymbol{\xi}_{\beta_{n}(t)}^{n},\boldsymbol{\eta}_{\beta_{n}(t)}^{n}\right),
\]
as $n\rightarrow\infty$, for $t\in [0,1]$.

In Section  \ref{sec:Introduction} we defined a
system of Dawson-Perkins processes $(U_{t}^{n},V_{t}^{n})_{t\geq0}$
on $\Lambda_{n}$, that solves \eqref{eq:DP-Lambda_n}. Recall that
\[
\mathbf{U}_{t}^{n}=\sum_{i\in\Lambda_{n}}u_{t}^{n}(i),\,\,\,\,\,\,\mathbf{V}_{t}^{n}=\sum_{i\in\Lambda_{n}}v_{t}^{n}(i).
\]

The limiting behavior of $(\mathbf{U}_{t}^{n},\mathbf{V}_{t}^{n})_{t\geq0}$
was studied in \cite{CDG04}, we stated the result in Theorem~\ref{thm:CDG-1}.

Theorem \ref{thm:MainResult-1} claims that the limiting behavior
of $\frac{1}{\left|\Lambda_{n}\right|}\left(\boldsymbol{\xi}_{\beta_{n}(t)}^{n},\boldsymbol{\eta}_{\beta_{n}(t)}^{n}\right)$
is similar to $\frac{1}{\left|\Lambda_{n}\right|}\left(\mathbf{U}_{\beta_{n}(t)}^{n},\mathbf{V}_{\beta_{n}(t)}^{n}\right)$
for $t\in [0,1]$. As we have mentioned above, contrary to Dawson-Perkins processes solving equation~\eqref{eq:DP-Lambda_n}, the useful self-duality property does not hold for our branching particle model. However we use 
the so called approximating duality technique that allows us to prove Theorem \ref{thm:MainResult-1}.  

In what follows we will use a periodic sum on $\Lambda_{n}$:
for $x,y\in\Lambda_{n}$ we have $x+y=(x+y)\mod\Lambda_{n}\in\Lambda_{n}$.

Next proposition is crucial for the proof of Theorem \ref{thm:MainResult-1}. 
\begin{prop}
\label{prop:Laplace-Fourier}Let $(X_{t},Y_{t})_{t\geq0}$ be the
solution to \eqref{eq:XY-1}. Then for all $\mbox{\ensuremath{a,b\geq0}}$,
\begin{align*}
\lim_{n\rightarrow\infty}\mathbb{E}\left(e^{-\frac{1}{\left|\Lambda_{n}\right|}(\boldsymbol{\xi}_{\beta_{n}(t)}^{n}+\boldsymbol{\eta}_{\beta_{n}(t)}^{n})(a+b)-i\frac{1}{\left|\Lambda_{n}\right|}(\boldsymbol{\xi}_{\beta_{n}(t)}^{n}-\boldsymbol{\eta}_{\beta_{n}(t)}^{n})(a-b)}\right)\\
=\mathbb{E}\left(e^{-(X_{t}+Y_{t})(a+b)-i(X_{t}-Y_{t})(a-b)}\right),
\end{align*}
 for $t\in [0,1]$. 
\end{prop}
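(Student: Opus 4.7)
The plan is to use the approximating duality technique to transfer the convergence, which is already known for the Dawson-Perkins SDE model on the torus (Theorem~\ref{thm:CDG-1}), to our particle model. Setting $c := a(1+i) + b(1-i)$, the quantity under consideration equals
\[\mathbb{E}\bigl[\exp(-\langle \xi^n_{\beta_n(T)}, \varphi_n\rangle - \langle \eta^n_{\beta_n(T)}, \bar\varphi_n\rangle)\bigr],\]
where $\varphi_n(x) \equiv c/|\Lambda_n|$ on $\Lambda_n$. The choice of coefficients $(1\pm i)$ in $c$ is dictated by Mytnik's classical self-duality for the Dawson-Perkins SDE: for the system solving~\eqref{eq:DP-Lambda_n} with $\widetilde\gamma := \gamma\sigma^2$, the exponential of a linear functional with complex weights in this specific conjugate form is preserved in expectation under the joint migration plus mutually catalytic noise.

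Second, I would introduce an independent dual Dawson-Perkins SDE $(\tilde u^n_t, \tilde v^n_t)$ on $\Lambda_n$ with initial condition $(\tilde u^n_0, \tilde v^n_0) = (\varphi_n, \bar\varphi_n)$ and apply It\^o's formula, using the generator $L^{(2)}$ from Lemma~\ref{lem:Lemma} on the primal side and the standard diffusion generator on the dual side, to the semimartingale
\[s \mapsto H(s) := \exp\bigl(-\langle \xi^n_s, \tilde u^n_{T-s}\rangle - \langle \eta^n_s, \tilde v^n_{T-s}\rangle\bigr),\qquad s\in[0,T].\]
The migration drifts and the leading (quadratic) branching/noise contributions cancel by exactly the combinatorics that makes the pure SDE self-duality work. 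What does not cancel are the third- and higher-order terms in the binomial expansion of the particle branching action on the exponential, which are absent in the continuous SDE case. Taking expectations and using $\xi^n_0 \equiv \theta_1$, $\eta^n_0 \equiv \theta_2$, this yields the approximating duality
\[\mathbb{E}\bigl[e^{-\langle \xi^n_T, \varphi_n\rangle - \langle \eta^n_T, \bar\varphi_n\rangle}\bigr] = \mathbb{E}\bigl[e^{-\theta_1 \tilde{\mathbf U}^n_T - \theta_2 \tilde{\mathbf V}^n_T}\bigr] + R_n(T),\]
where $\tilde{\mathbf U}^n_T := \sum_x \tilde u^n_T(x)$, $\tilde{\mathbf V}^n_T := \sum_x \tilde v^n_T(x)$, and $R_n(T)$ collects the higher-order binomial remainders.

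The main obstacle is controlling $R_n(\beta_n(T))$. Each term in $R_n$ has the schematic form $\int_0^{\beta_n(T)} \mathbb{E}[\sum_{x\in\Lambda_n} \gamma\xi^n_s(x)\eta^n_s(x)(\tilde u^n_{\beta_n(T)-s}(x))^{j}(\tilde v^n_{\beta_n(T)-s}(x))^{k} H(s)]\,ds$ with $j+k \geq 3$, weighted by $\sum_m (m-1)^{j+k}\nu_m$ (finite by $\sum_m m^3\nu_m < \infty$). Since $|\tilde u^n_0|, |\tilde v^n_0|$ are of order $1/|\Lambda_n|$, each extra factor beyond $j+k=2$ contributes a power of $1/|\Lambda_n|$; combined with the $|\Lambda_n|$ sites and the time window of length $|\Lambda_n| T$, a delicate moment computation using Lemma~\ref{cor:E(u^4)<infty} (which provides a uniform fourth-moment bound on the dual under the hypothesis $\gamma\sigma^2 < \frac{1}{\sqrt{3^5}(\tfrac{1}{2}g_\infty(0)+1)}$) together with the moment estimates of Section~4 for $(\xi^n, \eta^n)$ shows $R_n(\beta_n(T)) \to 0$. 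Once this is in place, Theorem~\ref{thm:CDG-1} applied to the rescaled dual $|\Lambda_n|(\tilde u^n, \tilde v^n)$ (which has constant initial data $c$ and $\bar c$) gives $(\tilde{\mathbf U}^n_{\beta_n(T)}, \tilde{\mathbf V}^n_{\beta_n(T)}) \to (\tilde X_T, \tilde Y_T)$ in distribution, where $(\tilde X, \tilde Y)$ solves~\eqref{eq:XY-1} from $(c, \bar c)$. The exact Mytnik self-duality for the limiting SDE then identifies $\mathbb{E}[e^{-\theta_1 \tilde X_T - \theta_2 \tilde Y_T}]$ with $\mathbb{E}[e^{-(X_T+Y_T)(a+b) - i(X_T-Y_T)(a-b)}]$ (the primal SDE~\eqref{eq:XY-1} started at $(\theta_1, \theta_2)$), which is precisely the right-hand side of the proposition.
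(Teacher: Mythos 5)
Your overall strategy is the paper's: an approximating duality between the particle system and an auxiliary Dawson--Perkins system on $\Lambda_n$ with initial mass of order $1/|\Lambda_n|$, control of the duality defect, and then transfer to the limit via the exact self-duality of the SDE model and Theorem~\ref{thm:CDG-1}. (Whether one applies the exact self-duality at finite $n$, as the paper does via Lemma~\ref{lem:Self-duality}, or only for the limiting two-dimensional SDE, as you propose, is immaterial.) However, there is a genuine gap in your identification of the error term $R_n$. You assert that ``the migration drifts and the leading (quadratic) branching/noise contributions cancel'' and that the defect consists only of the terms of order $j+k\geq 3$ coming from the branching. This is false: the particle migration is a \emph{jump} generator, so acting on the exponential it produces factors $e^{-\tilde u(y)-\tilde v(y)+\tilde u(x)+\tilde v(x)\pm i(\cdots)}-1$ whose Taylor expansion has a second-order part. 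Only the first-order part cancels against the drift $\tilde uQ^n$ of the dual SDE; the second-order part survives and gives the terms $e_{\xi,RW,1}$, $e_{\eta,RW,1}$ of the paper, which involve $\sum_{x,y}p^n_{xy}\xi^n_s(x)\bigl(\tilde u^n(x)-\tilde u^n(y)\bigr)\bigl(\tilde v^n(x)-\tilde v^n(y)\bigr)$ and the corresponding difference of squares. Your own bookkeeping ($1/|\Lambda_n|$ per dual factor, $|\Lambda_n|$ sites, time window $|\Lambda_n|T$) gives these terms a crude bound of order $(1/|\Lambda_n|)^2\cdot|\Lambda_n|\cdot|\Lambda_n|T=T$, i.e.\ $O(1)$, not $o(1)$. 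Showing they vanish is the hardest part of the proof (Lemma~\ref{eq:err_rw}): one needs the martingale representation of $(\tilde u^n,\tilde v^n)$, orthogonality of the driving noises, the flattening estimate of Lemma~\ref{lem:help-lemma}(a) for $p^n_t$ after time $n^\delta$, and the mixed second moments of $(\xi^n,\eta^n)$ from Lemmas~\ref{lem:Second-moment}--\ref{lem:mixed moment}. Your proposal contains no mechanism for this.

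Two secondary points. First, you cannot run the Dawson--Perkins SDE from the complex initial data $(\varphi_n,\bar\varphi_n)$ with $\varphi_n=c/|\Lambda_n|$, $c=a(1+i)+b(1-i)$: the equation involves $\sqrt{\widetilde\gamma\,\tilde u\tilde v}$ and Theorem~\ref{thm:CDG-1} is stated for non-negative constant initial data. The paper keeps the dual real, $\tilde u^n_0\equiv a/|\Lambda_n|$, $\tilde v^n_0\equiv b/|\Lambda_n|$, and places the complex weights entirely inside the duality function $H$; your argument should be rewritten in that form. Second, applying It\^o's formula directly to $s\mapsto H(\xi^n_s,\eta^n_s,\tilde u^n_{T-s},\tilde v^n_{T-s})$ is not legitimate, since $s\mapsto\tilde u^n_{T-s}$ is a time-reversed diffusion and not a semimartingale for the relevant filtration; the standard remedy, used in the paper, is Lemma~4.10 of Ethier--Kurtz applied to the two-parameter function $F^n_{r,s}=\mathbb{E}[H(\xi^n_r,\eta^n_r,\tilde u^n_s,\tilde v^n_s)]$, which requires the integrability checks of Lemma~\ref{lem:Eg,Eh<inf}.
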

\noindent {\bf Proof of Theorem \ref{thm:MainResult-1}.}
By easy adaptation of Lemma 2.5 of \cite{Mytnik98} one gets that
the mixed Laplace-Fourier transform 
\[
\mathbb{E}\left(e^{-(X+Y)(a+b)-i(X-Y)(a-b)}\right),\,\,\, a,b\geq0,
\]
determines the distribution of non-negative two-dimentional random
variables $(X,Y)$. Therefore, Theorem \ref{thm:MainResult-1} follows
easily from Proposition~\ref{prop:Laplace-Fourier} and properties of weak convergence.

\qed 


The rest of the section is organized as follows. Section~\ref{sec:dual_thm.26}  is devoted to the proof of Proposition~\ref{prop:Laplace-Fourier}, and  the proof of one of the technical propositions is deferred to Section~\ref{sec:techn_prop}.

\subsection{Proof of Proposition~\ref{prop:Laplace-Fourier}}\label{sec:dual_thm.26}


In what follows fix $T\in (0,1]$. Let $(\xi_{t}^{n},\eta_{t}^{n})_{t\geq0}$
be a mutually catalytic branching random walk from Theorem \ref{thm:MainResult-1}
(we will refer to it as the ``discrete process''). In the proof
of the proposition we will use the duality technique introduced in
\cite{Mytnik98}. To this end we will need the following Dawson-Perkins
processes:
\begin{itemize}
\item Let $(u_{t}^{n},v_{t}^{n})_{t\geq0}$ be a solution to \eqref{eq:DP-Lambda_n},
with $Q^n$ being the $Q$-matrix of the nearest neighbor random walk on $\Lambda_n$,
and with some initial conditions $\left(u_{0},v_{0}\right)$.
\item For arbitrary $a,b\geq 0$, the sequence $(\tilde{u}_{t}^{n},\tilde{v}_{t}^{n})_{t\geq0}$
solving \eqref{eq:DP-Lambda_n} with initial conditions 
\begin{equation}
\tilde{u}_{0}^{n}(x)=\frac{a}{\left|\Lambda_{n}\right|},\,\,\,\tilde{v}_{0}^{n}(x)=\frac{b}{\left|\Lambda_{n}\right|}\,\,\,\mathrm{for\,\, every\,\,}x\in\Lambda_{n}.\label{eq:u_n,v_n-init-cond}
\end{equation}
 
\end{itemize}
In what follows we assume that $\left(u_{t}^{n},v_{t}^{n}\right)_{t\geq0},\,\left(\tilde{u}_{t}^{n},\tilde{v}_{t}^{n}\right)_{t\geq0}$
and $\left(\xi_{t}^{n},\eta_{t}^{n}\right)_{t\geq0}$ are independent.
Now let us describe the state spaces for the processes involved in
this section. 

Similarly to $E_{fin}$ define
$E_{fin}^{n}=\left\{ f:\Lambda_{n}\longrightarrow\mathbb{N}_{0}\right\}$ , and $E_{fin,con}^{n}= E_{fin}^{n}\times E_{fin}^{n}$. Clearly, since $\Lambda_{n}$ is compact, the 
$L^1$ norm  of functions in $E_{fin}^{n}$ is finite. Also,  define $\widetilde{E}_{fin}^{n}=\left\{ f:\Lambda_{n}\longrightarrow\mathbb{R}_{+}\right\}$ , and $\widetilde{E}_{fin,con}^{n}= \widetilde{E}_{fin}^{n}\times \widetilde{E}_{fin}^{n}$.

First, by Theorem~\ref{lem:Lemma-1}, the process $\left(\xi_{t}^{n},\eta_{t}^{n}\right)$
that solves \eqref{eq:main_equation} with initial conditions $\left(\xi_{0}^{n},\eta_{0}^{n}\right)=\bar{\boldsymbol{\theta}}$
is $E_{fin}^{n}\times E_{fin}^{n}$-valued process. By our definition
\eqref{eq:u_n,v_n-init-cond}, $\left(\tilde{u}_{0}^{n},\tilde{v}_{0}^{n}\right)\in \widetilde{E}_{fin,con}^{n}$.
Moreover, by simple adaptation of the proof of Theorem 2.2(d) in \cite{DawsonPerkins98}
to our state space $\Lambda_{n}$, we get 
\[
\left(\tilde{u}_{t}^{n},\tilde{v}_{t}^{n}\right)\in \widetilde{E}_{fin,con},\,\,\forall t\geq0.
\]

For $(\varphi,\psi,\tilde\varphi,\tilde\psi)\in\mathbb{R}_{+}^{\Lambda_{n}}\times\mathbb{R}_{+}^{\Lambda_{n}}\times\mathbb{R}_{+}^{\Lambda_{n}}\times\mathbb{R}_{+}^{\Lambda_{n}}$
define
\[
H\left(\varphi,\psi,\tilde\varphi,\tilde\psi\right)=e^{-\left\langle \varphi+\psi,\tilde\varphi+\tilde\psi\right\rangle -i\left\langle \varphi-\psi,\tilde\varphi-\tilde\psi\right\rangle },
\]
and 
\begin{eqnarray*}
F_{t,s}^{n} & = & \mathbb{E}\left[H\left(\xi_{t}^{n},\eta_{t}^{n},\tilde{u}_{s}^{n},\tilde{v}_{s}^{n}\right)\right]\\
 & = & \mathbb{E}\left[e^{-\left\langle \xi_{t}^{n}+\eta_{t}^{n},\tilde{u}_{s}^{n}+\tilde{v}_{s}^{n}\right\rangle -i\left\langle \xi_{t}^{n}-\eta_{t}^{n},\tilde{u}_{s}^{n}-\tilde{v}_{s}^{n}\right\rangle }\right],
\end{eqnarray*}
for $0\leq s,t\leq\beta_{n}(T)$.

Let us recall the self-duality lemma from \cite{CDG04} (Lemma 4.1
in \cite{CDG04}).
\begin{lem}
\label{lem:Self-duality}Let $\left(u_{0},v_{0}\right),\left(\tilde{u}_{0},\tilde{v}_{0}\right)\in \widetilde{E}_{fin,con}^{n}$,
where $\left(u_{t},v_{t}\right)_{t\geq0},\left(\tilde{u}_{t},\tilde{v}_{t}\right)_{t\geq0}$
are independent solutions of \eqref{eq:DP-Lambda_n}. Then
\[
\mathbb{E}\left(H(u_{t},v_{t},\tilde{u}_{0},\tilde{v}_{0})\right)=\mathbb{E}\left(H(u_{0},v_{0},\tilde{u}_{t},\tilde{v}_{t})\right).
\]
 \end{lem}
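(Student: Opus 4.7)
The plan is to prove the self-duality identity by the standard Dynkin/Itô argument: show that the generators $\mathcal{A}_1$ (acting in the $(\varphi,\psi)$ variables) and $\mathcal{A}_2$ (acting in the $(\tilde\varphi,\tilde\psi)$ variables) of the two independent Dawson--Perkins systems coincide when applied to the function $H(\varphi,\psi,\tilde\varphi,\tilde\psi)$, and then conclude by an interpolation in time. To this end I would define, for $t,s\geq 0$,
\[
F(t,s):=\mathbb{E}\bigl[H(u_t,v_t,\tilde u_s,\tilde v_s)\bigr],
\]
and first argue that, because $|H|\leq 1$ and $(\varphi,\psi)\mapsto H(\varphi,\psi,\tilde\varphi,\tilde\psi)$ is smooth with derivatives of polynomial growth on the bounded state space $\Lambda_n$, Itô's formula applied to each factor (using independence of the two systems and the finite moment bounds for solutions of \eqref{eq:DP-Lambda_n} on the torus) yields
\[
\partial_t F(t,s)=\mathbb{E}[\mathcal{A}_1 H(u_t,v_t,\tilde u_s,\tilde v_s)],\quad \partial_s F(t,s)=\mathbb{E}[\mathcal{A}_2 H(u_t,v_t,\tilde u_s,\tilde v_s)].
\]

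Next I would compute both generators explicitly. Writing $\alpha:=(1+i)\tilde\varphi+(1-i)\tilde\psi$ and $\beta:=(1-i)\tilde\varphi+(1+i)\tilde\psi$, we have $H=\exp(-\langle\varphi,\alpha\rangle-\langle\psi,\beta\rangle)$, so $\partial_{\varphi(x)}H=-\alpha_x H$ and $\partial^2_{\varphi(x)}H=\alpha_x^2 H$, and similarly for $\psi$ with $\beta$. A direct expansion gives $\alpha_x^2+\beta_x^2=8\tilde\varphi(x)\tilde\psi(x)$, so
\[
\mathcal{A}_1 H = H\cdot\Bigl(-\langle\varphi,Q^n\alpha\rangle-\langle\psi,Q^n\beta\rangle+4\widetilde\gamma\sum_{x\in\Lambda_n}\varphi(x)\psi(x)\tilde\varphi(x)\tilde\psi(x)\Bigr).
\]
By the analogous computation (with $\tilde\alpha:=(1+i)\varphi+(1-i)\psi$, $\tilde\beta:=(1-i)\varphi+(1+i)\psi$), $\mathcal{A}_2 H$ has the same form with the roles of the untilded and tilded variables exchanged. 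The quartic term is manifestly symmetric, and symmetry of $Q^n$ (which follows from symmetry of $Q$ and the definition \eqref{eq:transition_kernel}) gives $\langle \varphi,Q^n\tilde\varphi\rangle=\langle\tilde\varphi,Q^n\varphi\rangle$ etc., so a line of bookkeeping shows
\[
\langle\varphi,Q^n\alpha\rangle+\langle\psi,Q^n\beta\rangle=\langle\tilde\varphi,Q^n\tilde\alpha\rangle+\langle\tilde\psi,Q^n\tilde\beta\rangle,
\]
and hence $\mathcal{A}_1 H=\mathcal{A}_2 H$ pointwise.

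To finish, fix $t>0$ and set $G(r):=F(t-r,r)$ for $r\in[0,t]$. The two partial derivative formulas give $G'(r)=-\partial_t F(t-r,r)+\partial_s F(t-r,r)=0$, so $G$ is constant, and evaluating at $r=0$ and $r=t$ yields $F(t,0)=F(0,t)$, which is exactly the claim. The main technical point I expect is making the differentiation under the expectation and the passage from local to true martingales rigorous: both are settled easily here because $|H|\leq 1$ is globally bounded and the state space $\Lambda_n$ is finite, so that all the sums defining $\mathcal{A}_i H$ are finite and uniformly bounded in the arguments, making dominated convergence and Fubini routinely applicable; if one prefers to avoid differentiation in $s$ with a moving argument, the same conclusion is reached by writing $F(t,0)-F(0,t)=\int_0^t \partial_r F(t-r,r)\,dr$ and invoking the identity of generators inside the integrand.
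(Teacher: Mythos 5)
Your proof is correct. The one noteworthy difference is that the paper does not actually prove this lemma at all: it simply invokes Lemma~4.1 of \cite{CDG04} (note the remark immediately following the statement, which records that the conditions there hold trivially on the torus). You instead give a self-contained argument via the standard Dynkin/Ethier--Kurtz duality pattern: write $H=\exp(-\langle\varphi,\alpha\rangle-\langle\psi,\beta\rangle)$ with $\alpha=(1+i)\tilde\varphi+(1-i)\tilde\psi$, $\beta=(1-i)\tilde\varphi+(1+i)\tilde\psi$, verify $\alpha_x^2+\beta_x^2=8\tilde\varphi(x)\tilde\psi(x)$ and hence that both generators applied to $H$ produce the same (symmetric) quartic self-duality term, observe that symmetry of $Q^n$ makes the two drift terms match, and close the argument with the interpolation $G(r)=F(t-r,r)$, $G'\equiv 0$. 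This is almost certainly the same mechanism used in \cite{CDG04}, and it is also the mechanism the paper itself deploys two lemmas later, via Lemma~\ref{thm:KURTZ-thm}, when $H$ is sandwiched between the particle system and the SDE system; so in providing a proof you have anticipated the paper's own calculus. The regularity points you flag --- domination by $|H|\leq 1$, finiteness of $\Lambda_n$, and moment bounds for solutions of \eqref{eq:DP-Lambda_n} --- are exactly what is needed to pass from local martingales and to justify differentiation under the expectation, so nothing is missing.
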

\begin{rem}
In \cite{CDG04} the above lemma is proved for more general state
spaces and initial conditions. The conditions in Lemma 4.1 in \cite{CDG04}
hold trivially in our case.
\end{rem}
Then we have the following proposition.
\begin{prop}
\label{lem:limFn}For any $(\theta_{1},\theta_{2})\in \mathbb{N}_{0}^2$, $a,b\geq0$,
\begin{equation}
\lim_{n\rightarrow\infty}\mathbb{E}\left[F_{\beta_{n}(T),0}^{n}\right]=\lim_{n\rightarrow\infty}\mathbb{E}\left[F_{0,\beta_{n}(T)}^{n}\right].\label{eq:Fn}
\end{equation}
 \end{prop}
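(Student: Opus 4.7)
The plan is to use an approximate duality argument: I would compute the time-derivatives of $F^n_{t,s}$ in both of its time arguments and telescope. By the independence of $(\xi^n,\eta^n)$ and $(\tilde u^n,\tilde v^n)$, conditioning on the Dawson-Perkins process and applying Lemma~\ref{lem:Lemma}(d) to $(\varphi,\psi)\mapsto H(\varphi,\psi,\tilde u^n_s,\tilde v^n_s)$ gives
\[
\partial_t F^n_{t,s}=\mathbb{E}\bigl[L^{(2),n}_{(\xi,\eta)}H(\xi^n_t,\eta^n_t,\tilde u^n_s,\tilde v^n_s)\bigr],
\]
where $L^{(2),n}_{(\xi,\eta)}$ is the generator of the particle system acting in the first pair of variables. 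Applying the standard It\^o formula in the $s$-variable to the Dawson-Perkins SDE~\eqref{eq:DP-Lambda_n} similarly yields
\[
\partial_s F^n_{t,s}=\mathbb{E}\bigl[L^{DP,n}_{(\tilde u,\tilde v)}H(\xi^n_t,\eta^n_t,\tilde u^n_s,\tilde v^n_s)\bigr].
\]
The telescoping identity
\[
F^n_{\beta_n(T),0}-F^n_{0,\beta_n(T)}=\int_0^{\beta_n(T)}\!\bigl[\partial_t F^n_{r,\beta_n(T)-r}-\partial_s F^n_{r,\beta_n(T)-r}\bigr]\,dr
\]
then reduces the proposition to showing that the expected integrand decays faster than $|\Lambda_n|^{-1}$.

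With the notation $\lambda(x)=(1+i)\tilde u(x)+(1-i)\tilde v(x)$, the exponent of $H$ factors as $-\langle\xi,\lambda\rangle-\langle\eta,\bar\lambda\rangle$. Since $Q^n$ is symmetric, the leading linear term of the motion part of $L^{(2),n}H$ agrees exactly with the drift part of $L^{DP,n}H$, so this portion cancels in the difference. A second-order Taylor expansion of $e^{-(k-1)\lambda(x)}-1$ combined with $\mathbb{E}(Z-1)=0$ and $\mathbb{E}(Z-1)^2=\sigma^2$, together with the algebraic identity $\lambda(x)^2+\bar\lambda(x)^2=8\tilde u(x)\tilde v(x)$, produces the common leading branching term $4\gamma\sigma^2\xi(x)\eta(x)\tilde u(x)\tilde v(x)H$ that matches the quadratic-variation contribution of $L^{DP,n}H$ (under the identification $\tilde\gamma=\gamma\sigma^2$ implicit in~\eqref{eq:DP-Lambda_n}). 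Consequently $(L^{(2),n}-L^{DP,n})H$ reduces to higher-order Taylor remainders: a motion remainder of order $\kappa(\xi(x)+\eta(x))|\lambda|^2$ per site, and a branching remainder controlled by $\gamma\xi(x)\eta(x)(|\lambda(x)|^3+|\bar\lambda(x)|^3)\sum_k(k-1)^3\nu_k$, where the last sum is finite thanks to the assumption $\sum_k k^3\nu_k<\infty$.

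To integrate the remainders in $r\in[0,\beta_n(T)]=[0,|\Lambda_n|T]$ I would use independence: Corollary~\ref{lem:first-moment} yields $\mathbb{E}[\xi^n_t(x)\eta^n_t(y)]=\theta_1\theta_2$ for all $x,y$, so the particle-side factors contribute $O(|\Lambda_n|)$ after summation over sites; on the Dawson-Perkins side I need a uniform-in-$s$ pointwise bound of the form $\mathbb{E}[\tilde u^n_s(x)^3+\tilde v^n_s(x)^3]\le C|\Lambda_n|^{-3}$ for $s\le\beta_n(T)$, which is exactly what Lemma~\ref{cor:E(u^4)<infty} supplies via its fourth-moment bound under the assumption~\eqref{eq:gamsigma_bound}. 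These estimates yield an expected branching remainder of order $|\Lambda_n|\cdot|\Lambda_n|^{-3}=|\Lambda_n|^{-2}$ per unit time, hence $O(|\Lambda_n|^{-1})$ over the full interval of length $|\Lambda_n|T$, which vanishes as $n\to\infty$; the motion remainder is smaller because of the additional $|\lambda(x)-\lambda(y)|^2$ cancellation. The main obstacle is precisely this uniform-in-$s$ control of third and fourth moments of $\tilde u^n_s(x)$ on the very long time scale $|\Lambda_n|T$: it is what forces the smallness restriction~\eqref{eq:gamsigma_bound} on $\gamma\sigma^2$ and the moment hypothesis on the offspring distribution, and would have to be removed by a different argument in order to prove the conjectured unconditional version.
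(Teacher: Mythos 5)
Your overall strategy is exactly the paper's: the telescoping identity is Lemma~\ref{lem:apply-kurtz} (obtained via Lemma~4.10 of \cite{EthierKurtz}, which requires the integrability check of Lemma~\ref{lem:Eg,Eh<inf}), the cancellation of the linear drift terms and the matching of the second-order branching term with the quadratic-variation part of the Dawson--Perkins generator under $\tilde\gamma=\gamma\sigma^2$ is the decomposition~\eqref{eq:e(T,n)=00003De_RW+e_br}, and your treatment of the third-order branching and motion remainders via Lemma~\ref{cor:E(u^4)<infty} reproduces Lemma~\ref{lem:branching-part-vanishes}.

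However, there is a genuine gap in your last sentence about the motion part. The \emph{second-order} Taylor term of the random-walk piece of the particle generator, namely the contribution
\[
\kappa\sum_{x,y}p^n_{xy}\,\xi^n(x)\Bigl[2\bigl(\tilde u^n(x)-\tilde u^n(y)\bigr)\bigl(\tilde v^n(x)-\tilde v^n(y)\bigr)+i\bigl((\tilde u^n(x)-\tilde u^n(y))^2-(\tilde v^n(x)-\tilde v^n(y))^2\bigr)\Bigr],
\]
has \emph{no} counterpart in the Dawson--Perkins generator and does not cancel; it is the terms $e_{\xi,RW,1}$ and $e_{\eta,RW,1}$ of the paper. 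Your claim that it is ``smaller because of the additional $|\lambda(x)-\lambda(y)|^2$ cancellation'' fails quantitatively: bounding $|\tilde u^n(x)-\tilde u^n(y)|$ by $\tilde u^n(x)+\tilde u^n(y)=O(|\Lambda_n|^{-1})$ gives a per-unit-time contribution of order $|\Lambda_n|\cdot|\Lambda_n|^{-2}=|\Lambda_n|^{-1}$, which after integration over $[0,\beta_n(T)]=[0,|\Lambda_n|T]$ is $O(1)$, not $o(1)$. To kill this term the paper needs the full force of Lemma~\ref{eq:err_rw}: the martingale representation \eqref{eq:u(x)-u(y)}--\eqref{eq:v(x)-v(y)} of the increments $\tilde u^n_s(x)-\tilde u^n_s(y)$, orthogonality of the driving Brownian motions, Cauchy--Schwarz, the uniform heat-kernel flattening estimate of Lemma~\ref{lem:help-lemma}(a) with a time-splitting at $s-n^{\delta}$ for $\delta\in(2,d)$, and the uniform second-moment bounds on the particle system (Corollary~\ref{cor:Finite second moments}, hence Lemmas~\ref{lem:Second-moment} and~\ref{lem:mixed moment}). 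None of this machinery appears in your sketch, and without it the argument does not close.
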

\begin{proof}
Postponed till the end of this section. It is proved via a series
of other results.
\end{proof}
Given Proposition \ref{lem:limFn}, it is easy to complete
\begin{proof}[Proof of Proposition \ref{prop:Laplace-Fourier}.]
\emph{ }Fix arbitrary $\theta_{1},\theta_{2}\geq 0$ and $a,b\geq0$.
For any $n\geq1$, let $\left(u_{t}^{n},v_{t}^{n}\right)_{t\geq0}$
be the solution to \eqref{eq:DP-Lambda_n} with $Q^{n}$ being a $Q$-matrix
of the nearest neighbor random walk on $\Lambda_{n}$, and initial
conditions $\left(u_{0}^{n},v_{0}^{n}\right)=\left(\xi_{0}^{n},\eta_{0}^{n}\right)=\boldsymbol{\bar{\theta}}$.
Recall that
\[
\boldsymbol{U}_{t}^{n}=\sum_{x\in\Lambda_{n}}u_{t}^{n}(x),\,\,\,\boldsymbol{V}_{t}^{n}=\sum_{x\in\Lambda_{n}}v_{t}^{n}(x).
\]
Note that
\begin{align}
\lim_{n\rightarrow\infty}\mathbb{E}\left[F_{0,\beta_{n}(T)}^{n}\right]  = & \lim_{n\rightarrow\infty}\mathbb{E}\left(e^{-\left\langle \xi_{0}^{n}+\eta_{0}^{n},\tilde{u}_{\beta_{n}(T)}^{n}+\tilde{v}_{\beta_{n}(T)}^{n}\right\rangle -i\left\langle \xi_{0}^{n}-\eta_{0}^{n},\tilde{u}_{\beta_{n}(T)}^{n}-\tilde{v}_{\beta_{n}(T)}^{n}\right\rangle }\right)\nonumber \\
  = & \lim_{n\rightarrow\infty}\mathbb{E}\left(e^{-\left(\mathbf{U}_{\beta_{n}(T)}^{n}+\mathbf{V}_{\beta_{n}(T)}^{n}\right)\frac{1}{\left|\Lambda_{n}\right|}(a+b)-i\left(\mathbf{U}_{\beta_{n}(T)}^{n}-\mathbf{V}_{\beta_{n}(T)}^{n}\right)\frac{1}{\left|\Lambda_{n}\right|}(a-b)}\right)\nonumber \\
  = & \mathbb{E}\left(e^{-(X_{T}+Y_{T})(a+b)-i(X_{T}-Y_{T})(a-b)}\right),\label{eq:limF^n_0_beta}
\end{align}
where the second equality follows by a self-duality relation in Lemma
\ref{lem:Self-duality}, and the third equality follows by Theorem
\ref{thm:CDG-1}. This means that 
\begin{align}
\lim_{n\rightarrow\infty}\mathbb{E}\left(e^{-(\boldsymbol{\xi}_{\beta(T)}^{n}+\boldsymbol{\eta}_{\beta_{n}(T)}^{n})\frac{1}{\left|\Lambda_{n}\right|}(a+b)-i(\boldsymbol{\xi}_{\beta_{n}(T)}^{n}-\boldsymbol{\eta}_{\beta_{n}(T)}^{n})\frac{1}{\left|\Lambda_{n}\right|}(a-b)}\right)\label{eq:lim_dist}\\
\begin{aligned}= & \lim_{n\rightarrow\infty}\mathbb{E}\left[F_{\beta_{n}(T),0}^{n}\right]=\lim_{n\rightarrow\infty}\mathbb{E}\left[F_{0,\beta_{n}(T)}^{n}\right]\\
= & \mathbb{E}\left(e^{-(X_{T}+Y_{T})(a+b)-i(X_{T}-Y_{T})(a-b)}\right),
\end{aligned}
\end{align}
where the second equality follows by Proposition \ref{lem:limFn},
and the last equality follows by \eqref{eq:limF^n_0_beta}. This finishes
the proof of Proposition \ref{prop:Laplace-Fourier}.
\end{proof}
To prove Proposition \ref{lem:limFn} we will need other results.
First we need Lemma 4.10  from \cite{EthierKurtz}.
\begin{lem}[ Lemma 4.10  in \cite{EthierKurtz}]
\label{thm:KURTZ-thm}Suppose a function $f(s,t)$ on $\left[0,\infty\right)\times\left[0,\infty\right)$
is absolutely continuous in $s$ for each fixed $t$ and absolutely
continuous in $t$ for each fixed $s$. Set $\left(f_{1},f_{2}\right)\equiv\nabla f$,
and assume that 
\begin{equation}
\intop_{0}^{T}\intop_{0}^{T}\left|f_{i}(s,t)\right|dsdt<\infty,\,\, i=1,2,\,\,\forall T>0.\label{eq:KURTZ-condition}
\end{equation}
Then for almost every $t\geq0$,
\begin{equation}
f\left(t,0\right)-f\left(0,t\right)=\intop_{0}^{t}\left(f_{1}\left(s,t-s\right)-f_{2}\left(s,t-s\right)\right)ds.\label{eq:KURTZ-result}
\end{equation}

\end{lem}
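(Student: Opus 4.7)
The plan is to prove~\eqref{eq:KURTZ-result} by a standard mollification argument. At heart, the identity is just the fundamental theorem of calculus along the diagonal line segment $s \mapsto (s, t-s)$, which is immediate for $C^{1}$ functions via the chain rule; the real work is extending it to $f$ with only separate absolute continuity and justifying the passage to the limit using~\eqref{eq:KURTZ-condition}.

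First I would verify the identity for $f \in C^{1}([0,\infty)^{2})$. Setting $g(s) := f(s, t-s)$ on $[0, t]$, the chain rule gives $g'(s) = f_{1}(s, t-s) - f_{2}(s, t-s)$, and the fundamental theorem of calculus yields
\begin{equation*}
f(t, 0) - f(0, t) \;=\; g(t) - g(0) \;=\; \int_{0}^{t}\bigl(f_{1}(s, t-s) - f_{2}(s, t-s)\bigr)\, ds,
\end{equation*}
valid for every $t \geq 0$ in this smooth case.

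Second, for a general $f$ satisfying the hypotheses, I would approximate $f$ by smooth functions $f^{(k)}$ obtained by mollification (after a suitable extension of $f$ across the axes so that the convolution is well-defined near the boundary). Hypothesis~\eqref{eq:KURTZ-condition} gives $f^{(k)}_{i} \to f_{i}$ in $L^{1}([0,T]^{2})$ for $i = 1, 2$ and every $T > 0$. A straightforward Fubini-type bound
\begin{equation*}
\int_{0}^{T}\Bigl|\int_{0}^{t}\bigl(f^{(k)}_{i}(s, t-s) - f_{i}(s, t-s)\bigr)\,ds\Bigr|\,dt \;\leq\; \|f^{(k)}_{i} - f_{i}\|_{L^{1}([0,T]^{2})}
\end{equation*}
(obtained by swapping the order of integration and using the change of variables $u = t-s$) then shows that the right-hand side of~\eqref{eq:KURTZ-result} for $f^{(k)}$ converges to the right-hand side for $f$ in $L^{1}([0, T])$, and therefore pointwise for almost every $t$ along a subsequence. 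Concurrently, the separate absolute continuity of $f$ in each variable yields convergence of the boundary traces $f^{(k)}(t, 0) \to f(t, 0)$ and $f^{(k)}(0, t) \to f(0, t)$ for almost every $t$, so we may pass to the limit on the left as well. Combining the two limits gives~\eqref{eq:KURTZ-result} for almost every $t$.

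The main obstacle is the passage to the limit for the boundary terms $f^{(k)}(t, 0)$ and $f^{(k)}(0, t)$: these one-dimensional traces along the axes are not controlled directly by the two-dimensional $L^{1}$ hypothesis on the gradient. It is precisely the separate absolute continuity assumption that rescues the argument, providing the needed a.e.\ convergence of the traces; this also explains why the identity is only asserted for \emph{almost every} $t$, as in the statement.
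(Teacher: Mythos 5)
The paper does not prove this lemma at all --- it is quoted verbatim as Lemma 4.10 of Ethier--Kurtz \cite{EthierKurtz} --- so there is no in-paper proof to match; what can be assessed is whether your mollification argument is sound, and it has a genuine gap exactly at the point you flag. Mollification gives $f^{(k)}\to f$ almost everywhere with respect to \emph{two-dimensional} Lebesgue measure, which says nothing about the values of $f^{(k)}$ on the axes $\{(t,0)\}$ and $\{(0,t)\}$, a two-dimensional null set. Your assertion that ``the separate absolute continuity of $f$ in each variable yields convergence of the boundary traces $f^{(k)}(t,0)\to f(t,0)$'' is precisely the nontrivial step and is left unproved. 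To control $f^{(k)}(t,0)=\int f(t-a,-b)\rho_k(a,b)\,da\,db$ you must compare values of $f$ slightly \emph{off} the axis with values on it; separate continuity does not give joint continuity, so you would have to write $f(s,\tau)=f(s,0)+\int_0^\tau f_2(s,v)\,dv$ and then show that the mollifier-weighted average of $\int_0^{O(1/k)}|f_2(s,v)|\,dv$ over $|s-t|\le 1/k$ tends to $0$ for a.e.\ $t$ --- a maximal-function/subsequence argument using only $f_2\in L^1([0,T]^2)$. This is doable but is real work, not a consequence you can wave at; as written the proof is incomplete. (The extension of $f$ across the axes before mollifying, and the identification of $f_1,f_2$ as distributional derivatives so that $\partial_s f^{(k)}=f_1*\rho_k$, are also glossed over, though these are routine.)

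It is worth knowing that the standard proof (the one in Ethier--Kurtz) avoids all of this. One integrates the claimed identity over $t\in[0,T]$: by Fubini and the substitution $u=t-s$,
\begin{equation}
\intop_{0}^{T}\intop_{0}^{t}f_{1}(s,t-s)\,ds\,dt=\intop_{0}^{T}\intop_{0}^{T-u}f_{1}(s,u)\,ds\,du=\intop_{0}^{T}\bigl(f(T-u,u)-f(0,u)\bigr)\,du,
\end{equation}
using only the absolute-continuity representation $f(s,u)-f(0,u)=\int_0^s f_1(\sigma,u)\,d\sigma$, and similarly for $f_2$. The diagonal terms $\int_0^T f(T-u,u)\,du$ cancel upon subtraction, leaving $\int_0^T\bigl(f(t,0)-f(0,t)\bigr)\,dt$; since both sides of \eqref{eq:KURTZ-result} are locally integrable in $t$ and have equal integrals over every $[0,T]$, they agree for almost every $t$ by Lebesgue differentiation. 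This route never needs pointwise control of $f$ on the axes beyond the one-dimensional absolute continuity already assumed, which is exactly what your mollification scheme struggles to recover. I would recommend either adopting this argument or supplying the missing trace estimate in full.
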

We will apply this lemma for the function $F_{r,s}^{n}=\mathbb{E}\left[H\left(\xi_{r}^{n},\eta_{r}^{n},\tilde{u}_{s}^{n},\tilde{v}_{s}^{n}\right)\right]$.
Then we will show that for $f(r,s)=F_{r,s}^{n}$ and $t=\beta_{n}(T)$,
the right-hand side of \eqref{eq:KURTZ-result} tends to $0$, as
$n\rightarrow\infty$.

In order to check the conditions in Lemma \ref{thm:KURTZ-thm} we
will need several lemmas. In the next two lemmas we will derive martingale
problems for processes $(\xi_{\cdot}^{n},\eta_{\cdot}^{n})$ and $(\tilde{u}_{\cdot}^{n},\tilde{v}_{\cdot}^{n})$.
Recall that $\left\{ p_{x,y}^{n}\right\} _{x,y\in\Lambda_{n}}$
are the transition jump probabilities of the underlying  nearest-neighbor random walk on $\Lambda_{n}$.
\begin{lem}
\label{lem:g-mart} For any $(\varphi,\psi)\in \widetilde{E}_{fin,con}^{n}$
define
\begin{multline}
g\left(\xi_{s}^{n},\eta_{s}^{n},\varphi,\psi\right)=H\left(\xi_{s}^{n},\eta_{s}^{n},\varphi,\psi\right)\left\{ \kappa\sum_{x,y\in\Lambda_{n}}\xi_{s}^{n}(x)\right.\\
\begin{aligned} & \left.\times p_{xy}^{n}\left[e^{-\varphi(y)-\psi(y)+\varphi(x)+\psi(x)-i\left(\varphi(y)-\psi(y)-\varphi(x)+\psi(x)\right)}-1\right]\right.\\
 & +\kappa\sum_{x,y\in\Lambda_{n}}\eta_{s}^{n}(x)p_{xy}^{n}\left[e^{-\varphi(y)-\psi(y)+\varphi(x)+\psi(x)+i\left(\varphi(y)-\psi(y)-\varphi(x)+\psi(x)\right)}-1\right]\\
 & +\gamma\sum_{x\in\Lambda_{n}}\xi_{s}^{n}(x)\eta_{s}^{n}(x)\sum_{k\geq0}\nu_{k}\left[e^{-(k-1)\left(\varphi(x)+\psi(x)+i\left(\varphi(x)-\psi(x)\right)\right)}-1\right]\\
 & \left.+\gamma\sum_{x\in\Lambda_{n}}\xi_{s}^{n}(x)\eta_{s}^{n}(x)\sum_{k\geq0}\nu_{k}\left[e^{-(k-1)\left(\varphi(x)+\psi(x)-i\left(\varphi(x)-\psi(x)\right)\right)}-1\right]\right\} ,\,\,\,\forall s\geq0.
\end{aligned}
\label{eq:g-def}
\end{multline}
 Then
\[
H\left(\xi_{t}^{n},\eta_{t}^{n},\varphi,\psi\right)-\intop_{0}^{t}g\left(\xi_{s}^{n},\eta_{s}^{n},\varphi,\psi\right)ds,\,\,\,\forall t\geq0.
\]
is an $\left\{ \mathcal{F}_{t}^{\xi,\eta}\right\} _{t\geq0}$-martingale.\end{lem}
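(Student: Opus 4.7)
The plan is to recognize the expression in~\eqref{eq:g-def} as $L^{(2)}$ applied to the map $(\phi_1,\phi_2)\mapsto H(\phi_1,\phi_2,\varphi,\psi)$ with $(\varphi,\psi)$ frozen, and then invoke the martingale problem in Lemma~\ref{lem:Lemma}(d). Thus, fix $(\varphi,\psi)\in\widetilde{E}_{fin,con}^{n}$ and set $f(\phi_1,\phi_2):=H(\phi_1,\phi_2,\varphi,\psi)$. Since $\Lambda_n$ is finite and $\varphi,\psi\geq 0$, the real part of the exponent
$-\langle \phi_1+\phi_2,\varphi+\psi\rangle-i\langle \phi_1-\phi_2,\varphi-\psi\rangle$ is non-positive, so $|f|\le 1$. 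Using the elementary bound $|e^{a}-e^{b}|\leq |a-b|$ for complex $a,b$ with $\Re a,\Re b\leq 0$, together with linearity of the exponent in $(\phi_1,\phi_2)$, a short computation gives
\[
|f(\phi_1,\phi_2)-f(\tilde\phi_1,\tilde\phi_2)|\leq 2\|\varphi+\psi\|_\infty\bigl(\|\phi_1-\tilde\phi_1\|_1+\|\phi_2-\tilde\phi_2\|_1\bigr),
\]
so both $\Re f$ and $\Im f$ belong to $\mathrm{Lip}(E_{fin}^n\times E_{fin}^n)$.

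Next, I would apply Lemma~\ref{lem:Lemma}(d) separately to $\Re f$ and $\Im f$, which yields that
\[
M^{f}(t):=f(\xi_t^n,\eta_t^n)-f(\xi_0^n,\eta_0^n)-\int_0^t L^{(2)}f(\xi_s^n,\eta_s^n)\,ds
\]
is a (complex-valued) $\mathcal{F}_t^{\xi,\eta}$-martingale. Since subtracting the constant $f(\xi_0^n,\eta_0^n)$ preserves the martingale property, the statement of the lemma will follow once $L^{(2)}f(\xi_s^n,\eta_s^n)=g(\xi_s^n,\eta_s^n,\varphi,\psi)$ is established.

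For that last step I would compute $L^{(2)}f$ term by term using the definition of $L^{(2)}$. For the random-walk part the key observation is that, for $\phi_1^{x\to y}=\phi_1+\delta_y-\delta_x$, the exponent of $H$ changes by exactly $-(\varphi(y)+\psi(y)-\varphi(x)-\psi(x))-i(\varphi(y)-\psi(y)-\varphi(x)+\psi(x))$, so
\[
f(\phi_1^{x\to y},\phi_2)-f(\phi_1,\phi_2)=f(\phi_1,\phi_2)\bigl[e^{-\varphi(y)-\psi(y)+\varphi(x)+\psi(x)-i(\varphi(y)-\psi(y)-\varphi(x)+\psi(x))}-1\bigr],
\]
matching the first summand in the braces of~\eqref{eq:g-def}; the analogous computation for $\phi_2^{x\to y}$ produces the second summand (the sign of the imaginary exponent flips because $\phi_2$ enters the imaginary exponent of $H$ with opposite sign). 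For the branching part, replacing $\phi_1$ by $\phi_1+(k-1)\delta_x$ (resp.\ $\phi_2$ by $\phi_2+(k-1)\delta_x$) multiplies $f$ by $e^{-(k-1)((\varphi(x)+\psi(x))+i(\varphi(x)-\psi(x)))}$ (resp.\ $e^{-(k-1)((\varphi(x)+\psi(x))-i(\varphi(x)-\psi(x)))}$), reproducing the third and fourth summands. Pulling the common factor $f(\xi_s^n,\eta_s^n)=H(\xi_s^n,\eta_s^n,\varphi,\psi)$ out then identifies $L^{(2)}f$ with $g$.

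The only technical point worth flagging is checking that Lemma~\ref{lem:Lemma}(d) can be invoked: its hypothesis asks for Lipschitz functions on $E_{fin}\times E_{fin}$, whereas $f$ is complex-valued and the background space for this section is $E_{fin}^n\times E_{fin}^n$. Both issues are benign, handled respectively by splitting into real and imaginary parts and by the finiteness of $\Lambda_n$ (which makes $\|\varphi+\psi\|_\infty<\infty$ and makes all sums in $L^{(2)}f$ absolutely convergent, using $\sum_k k\nu_k=1$ to control the branching sum at $k=0$). Apart from this bookkeeping, the proof is a direct calculation, which is why the authors remark it may be left to the enthusiastic reader.
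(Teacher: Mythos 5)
Your proposal is correct and is exactly the paper's route: the paper's proof is the single line ``immediate by Lemma~\ref{lem:Lemma}(d),'' and you have simply spelled out the identification $g(\cdot,\cdot,\varphi,\psi)=L^{(2)}H(\cdot,\cdot,\varphi,\psi)$, the Lipschitz check, and the real/imaginary splitting that this citation implicitly relies on. The term-by-term computation of $L^{(2)}f$, including the sign flip in the imaginary exponent for the $\eta$-population, matches~\eqref{eq:g-def}.
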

\begin{proof}
The result is immediately by Lemma \ref{lem:Lemma}(d).
\end{proof}
Similar result holds for the Dawson-Perkins process.
\begin{lem}
\label{lem:h-mart}For any $(\varphi,\psi)\in E_{fin,con}^{n}$,
define
\begin{align}
h\left(\varphi,\psi,\tilde{u}_{s}^{n},\tilde{v}_{s}^{n}\right) = & H\left(\varphi,\psi,\tilde{u}_{s}^{n},\tilde{v}_{s}^{n}\right)\left\{- \sum_{x\in\Lambda_{n}}\tilde{u}_{s}^{n}Q^n(x)\left(\varphi(x)+\psi(x)+i\left(\varphi(x)-\psi(x)\right)\right)\right.\nonumber \\
   & -\sum_{x\in\Lambda_{n}}\tilde{v}_{s}^{n}Q^n(x)\left(\varphi(x)+\psi(x)-i\left(\varphi(x)-\psi(x)\right)\right)\nonumber \\
   & \left.+4\tilde{\gamma}\sum_{x\in\Lambda_{n}}\tilde{u}_{s}^{n}(x)\tilde{v}_{s}^{n}(x)\varphi(x)\psi(x)\right\} ,\,\,\,\forall s\geq0.\label{eq:h-def}
\end{align}
Then 
\[
H\left(\varphi,\psi,\tilde{u}_{t}^{n},\tilde{v}_{t}^{n}\right)-\intop_{0}^{t}h\left(\varphi,\psi,\tilde{u}_{s}^{n},\tilde{v}_{s}^{n}\right)ds,\,\,\, t\geq0,
\]
is an $\left\{ \mathcal{F}_{t}^{\tilde{u}^{n},\tilde{v}^{n}}\right\} _{t\geq0}$-martingale.\end{lem}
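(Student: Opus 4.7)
The plan is to apply It\^o's formula to the function $(\tilde{u},\tilde{v})\mapsto H(\varphi,\psi,\tilde{u},\tilde{v})$ along the trajectory of the Dawson--Perkins process $(\tilde{u}_t^{n},\tilde{v}_t^{n})$ which solves the SDE system \eqref{eq:DP-Lambda_n} on the finite torus $\Lambda_n$. Since $\varphi,\psi\in E_{fin}^{n}$ are deterministic and bounded (as $\Lambda_n$ is finite), the map $(\tilde{u},\tilde{v})\mapsto H(\varphi,\psi,\tilde{u},\tilde{v})$ is a smooth function on $\mathbb{R}_+^{\Lambda_n}\times\mathbb{R}_+^{\Lambda_n}$ of complex value (the real and imaginary parts being $C^\infty$), so It\^o's formula can be applied component-wise.

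First I would compute the partial derivatives. Writing $\alpha(x):=\varphi(x)+\psi(x)+i(\varphi(x)-\psi(x))$ and $\beta(x):=\varphi(x)+\psi(x)-i(\varphi(x)-\psi(x))$, one gets
\[
\frac{\partial H}{\partial \tilde{u}(x)}=-\alpha(x)H,\quad
\frac{\partial H}{\partial \tilde{v}(x)}=-\beta(x)H,\quad
\frac{\partial^{2}H}{\partial \tilde{u}(x)^{2}}=\alpha(x)^{2}H,\quad
\frac{\partial^{2}H}{\partial \tilde{v}(x)^{2}}=\beta(x)^{2}H.
\]
Next I substitute the drift and martingale parts coming from \eqref{eq:DP-Lambda_n}. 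The drift of $\tilde{u}_t^n(x)$ (resp. $\tilde{v}_t^n(x)$) is $\tilde{u}_s^n Q^n(x)\,ds$ (resp. $\tilde{v}_s^n Q^n(x)\,ds$), producing the first two terms in \eqref{eq:h-def}. Since all the driving Brownian motions $\{B^x\}$ and $\{W^x\}$ are mutually independent, the joint quadratic covariations $d\langle \tilde{u}(x),\tilde{u}(y)\rangle$ and $d\langle \tilde{v}(x),\tilde{v}(y)\rangle$ vanish for $x\neq y$, and $d\langle \tilde{u}(x),\tilde{v}(y)\rangle\equiv 0$ for all $x,y$; thus only the diagonal second-derivative terms contribute, yielding $\frac{1}{2}\tilde\gamma\,\tilde{u}_s^n(x)\tilde{v}_s^n(x)[\alpha(x)^{2}+\beta(x)^{2}]$ at site $x$.

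The key algebraic simplification is the identity
\[
\alpha(x)^{2}+\beta(x)^{2}=2\bigl[(\varphi(x)+\psi(x))^{2}-(\varphi(x)-\psi(x))^{2}\bigr]=8\varphi(x)\psi(x),
\]
which produces exactly the coefficient $4\tilde\gamma\,\varphi(x)\psi(x)$ appearing in \eqref{eq:h-def}, thereby matching the expression for $h$. Finally, to promote the It\^o decomposition from a local to a genuine martingale statement, I would invoke the bound $|H|\leq e^{-\langle\varphi+\psi,\tilde{u}+\tilde{v}\rangle}\leq 1$, which together with standard second-moment estimates for $(\tilde{u}^n,\tilde{v}^n)$ (obtainable from the SDE \eqref{eq:DP-Lambda_n} by Gronwall, exactly as in Theorem 2.2 of \cite{DawsonPerkins98} restricted to the finite space $\Lambda_n$) shows that the stochastic integrands have finite $L^2$-norms on every $[0,t]$; the usual localization-and-dominated-convergence argument then removes localization.

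I do not expect any serious obstacles in this proof: the state space is finite, $H$ is bounded by $1$, and the computation is purely a bookkeeping It\^o-formula exercise. The only place requiring a bit of care is the verification that the quadratic-variation cross terms vanish (which rests squarely on the independence of $\{B^x\}_x$ and $\{W^x\}_x$) and the algebraic identity $\alpha^{2}+\beta^{2}=8\varphi\psi$, which is what makes the coefficient $4\tilde\gamma$ appear naturally rather than $2\tilde\gamma$ or some more complicated combination.
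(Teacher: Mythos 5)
Your proposal is correct and follows essentially the same route as the paper, which simply cites the martingale problem for the Dawson--Perkins process (Theorem 2.2(c)(iv) in \cite{DawsonPerkins98}) together with It\^o's formula and ``simple algebra''; your computation of the derivatives of $H$, the vanishing of the cross-variations, and the identity $\alpha^{2}+\beta^{2}=8\varphi\psi$ is exactly that algebra made explicit, and the bound $|H|\leq 1$ plus the moment estimates from \cite{DawsonPerkins98} correctly removes the localization.
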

\begin{proof}
The result is immediate by Theorem 2.2(c)(iv) in \cite{DawsonPerkins98}, It$\mathrm{\hat{o}}$'s
lemma (Theorem II.5.1 in \cite{IkedaWatanabe}) and simple algebra. 
\end{proof}
\begin{lem}
\label{lem:Eg,Eh<inf}For any $t>0$,
\begin{equation}
\sup_{\begin{array}{c}
0\leq s\leq t\\
0\leq r\leq t
\end{array}}\mathbb{E}\left|h(\xi_{r}^{n},\eta_{r}^{n},\tilde{u}_{s}^{n},\tilde{v}_{s}^{n})\right|<\infty\label{eq:Eh<inf}
\end{equation}
and
\begin{equation}
\sup_{\begin{array}{c}
0\leq s\leq t\\
0\leq r\leq t
\end{array}}\mathbb{E}\left|g(\xi_{r}^{n},\eta_{r}^{n},\tilde{u}_{s}^{n},\tilde{v}_{s}^{n})\right|<\infty.\label{eq:Eg<inf}
\end{equation}
\end{lem}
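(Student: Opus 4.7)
The plan is to bound $|g|$ and $|h|$ pointwise by polynomial functionals separating the two independent processes $(\xi_r^n,\eta_r^n)$ and $(\tilde u_s^n,\tilde v_s^n)$, then compute the resulting expectations using the moment formulas of Corollaries~\ref{lem:first-moment} and~\ref{cor:E(xi*eta)=00003DE(xi)E(eta)} on the particle side, together with the analogous identities on the Dawson--Perkins side. The universal starting point is that $|H(\varphi,\psi,\tilde\varphi,\tilde\psi)|=e^{-\langle\varphi+\psi,\tilde\varphi+\tilde\psi\rangle}\leq 1$ whenever all four arguments have non-negative entries, as is the case here.

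For $g$, the exponentials inside the braces of \eqref{eq:g-def} may have positive real parts, so a naive modulus estimate of the brackets alone is insufficient. The remedy is the shift identity
\[
H(\xi,\eta,\tilde\varphi,\tilde\psi)\,e^{-\tilde\varphi(y)-\tilde\psi(y)+\tilde\varphi(x)+\tilde\psi(x)-i(\tilde\varphi(y)-\tilde\psi(y)-\tilde\varphi(x)+\tilde\psi(x))}=H(\xi^{x\to y},\eta,\tilde\varphi,\tilde\psi),
\]
together with its $\eta$-counterpart and the two analogous branching identities obtained by replacing $\xi$ (resp.\ $\eta$) by $\xi+(k-1)\delta_x$ (resp.\ $\eta+(k-1)\delta_x$). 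Since the rate factors $\kappa\xi(x)$ and $\gamma\xi(x)\eta(x)$ that multiply the brackets force the shifted configurations to remain in $\mathbb{N}_0^{\Lambda_n}$, the bound $|H(\text{shifted})|\leq 1$ applies as well. Each bracketed factor, after multiplication by $H$, therefore reduces to a difference of two $H$-values of modulus at most one, yielding
\[
|g(\xi_r^n,\eta_r^n,\tilde u_s^n,\tilde v_s^n)|\leq 2\kappa(\boldsymbol{\xi}_r^n+\boldsymbol{\eta}_r^n)+4\gamma\sum_{x\in\Lambda_n}\xi_r^n(x)\eta_r^n(x).
\]
Taking expectations via Corollaries~\ref{lem:first-moment} and~\ref{cor:E(xi*eta)=00003DE(xi)E(eta)} (with $\varphi=\psi=\delta_x$) gives $\mathbb{E}|g|\leq 2\kappa(\theta_1+\theta_2)|\Lambda_n|+4\gamma\theta_1\theta_2|\Lambda_n|$, a bound independent of $r$ and $s$.

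For $h$ no shift trick is needed, since $h$ is linear/quadratic in the particle arguments. Using $|H|\leq 1$ and $|a+ib|\leq|a|+|b|$ to control the factors $\xi_r^n(x)+\eta_r^n(x)\pm i(\xi_r^n(x)-\eta_r^n(x))$, one directly obtains
\[
|h|\leq 2\sum_{x\in\Lambda_n}\bigl(|\tilde u_s^n Q^n(x)|+|\tilde v_s^n Q^n(x)|\bigr)\bigl(\xi_r^n(x)+\eta_r^n(x)\bigr)+4\tilde\gamma\sum_{x\in\Lambda_n}\tilde u_s^n(x)\tilde v_s^n(x)\xi_r^n(x)\eta_r^n(x).
\]
Independence of $(\xi^n,\eta^n)$ and $(\tilde u^n,\tilde v^n)$ factorises the expectation. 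On the particle side I use $\mathbb{E}\xi_r^n(x)=\theta_1$ and $\mathbb{E}[\xi_r^n(x)\eta_r^n(x)]=\theta_1\theta_2$ from the above corollaries; on the Dawson--Perkins side I need $\mathbb{E}\tilde u_s^n(x)=a/|\Lambda_n|$ (immediate from the martingale property of $\mathbf{U}^n$ and translation invariance, and similarly for $\tilde v_s^n$) together with $\mathbb{E}[\tilde u_s^n(x)\tilde v_s^n(x)]=ab/|\Lambda_n|^2$. The main (and only mildly non-trivial) ingredient of the proof is this last identity: by the independence of the driving Brownians $\{B^x\},\{W^y\}$ the covariation $\langle\tilde u_\cdot^n(x),\tilde v_\cdot^n(y)\rangle$ vanishes, so by It\^o's formula and translation invariance the function $c_s(y):=\mathbb{E}[\tilde u_s^n(0)\tilde v_s^n(y)]$ solves the closed linear equation $\partial_s c_s=2Q^n c_s$ on the torus, whose constant initial datum $ab/|\Lambda_n|^2$ is preserved because constants are $Q^n$-harmonic. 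Combining these ingredients yields $\mathbb{E}|h|\leq 4\kappa(a+b)(\theta_1+\theta_2)+4\tilde\gamma ab\theta_1\theta_2/|\Lambda_n|$, again uniform in $r,s\in[0,t]$.
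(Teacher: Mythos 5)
Your proof is correct, and it takes a somewhat different route from the paper's. For \eqref{eq:Eh<inf} the paper simply cites the proof of Theorem 2.4(b) in \cite{DawsonPerkins98}, whereas you give a self-contained argument whose only nonstandard ingredient is the identity $\mathbb{E}[\tilde u_s^n(x)\tilde v_s^n(x)]=\mathbb{E}[\tilde u_s^n(x)]\,\mathbb{E}[\tilde v_s^n(x)]$, which you correctly derive from the vanishing covariation of the independent driving Brownian motions (this is the Dawson--Perkins analogue of \eqref{eq:E(<xi,phi><eta,psi>)}). For \eqref{eq:Eg<inf} the paper bounds each bracket via the inequality $|e^{-z+iy}-1|\le |z|+|y|$, valid for $z\ge 0$, and then uses first moments of both the particle system and the dual process; strictly speaking the real part of the exponent in \eqref{eq:g-def} need not be non-positive, so the paper's bound as written glosses over a sign issue that is only repaired by the prefactor $H$. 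Your shift identity $H(\xi,\eta,\tilde\varphi,\tilde\psi)\,e^{\cdots}=H(\xi^{x\to y},\eta,\tilde\varphi,\tilde\psi)$ (and its branching analogues) handles exactly this point cleanly: since the rate factors $\xi(x)p^n_{xy}$ and $\gamma\xi(x)\eta(x)$ vanish unless the shifted configuration is admissible, each product $H\cdot[\,e^{\cdots}-1]$ is a difference of two quantities of modulus at most one, so $|g|$ is bounded by the total jump rate and only particle-side moments (Corollaries \ref{lem:first-moment} and \ref{cor:E(xi*eta)=00003DE(xi)E(eta)}) are needed. What your approach buys is a tighter, fully rigorous pointwise bound on $|g|$ that requires no moment information on $(\tilde u^n,\tilde v^n)$; what the paper's approach buys is brevity, at the cost of leaning on the external reference for $h$ and on an inequality whose hypotheses are not literally met for $g$.
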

\begin{proof}
\eqref{eq:Eh<inf} is verified in the proof of Theorem 2.4(b) in \cite{DawsonPerkins98}
. 

Now let us check \eqref{eq:Eg<inf}. First, by simple algebra it
is trivial to see that for any $z\in\mathbb{R}_{+}$ and $y\in\mathbb{R}$,
\begin{align}
\left|e^{-z+iy}-1\right| 
 & \leq 
\left(\left|z\right|+\left|y\right|\right).\label{eq:bound-e^-z}
\end{align}
Hence
\begin{multline*}
\sup_{\begin{array}{c}
0\leq s\leq t\\
0\leq r\leq t
\end{array}}\mathbb{E}\left|g(\xi_{r}^{n},\eta_{r}^{n},\tilde{u}_{s}^{n},\tilde{v}_{s}^{n})\right|\\
\begin{aligned}
\leq & \sup_{0\leq s,r\leq t}C\mathbb{E}\left\{ \kappa\sum_{x,y\in\Lambda_{n}}\xi_{r}^{n}(x)p_{xy}^{n}\left[\tilde{u}_{s}^{n}(y)+\tilde{v}_{s}^{n}(y)+\tilde{u}_{s}^{n}(x)+\tilde{v}_{s}^{n}(x)\right]\right.\\
 & +\kappa\sum_{x,y\in\Lambda_{n}}\eta_{r}^{n}(x)p_{xy}^{n}\left[\tilde{u}_{s}^{n}(y)+\tilde{v}_{s}^{n}(y)+\tilde{u}_{s}^{n}(x)+\tilde{v}_{s}^{n}(x)\right]\\
 & +\gamma\sum_{x\in\Lambda_{n}}\xi_{r}^{n}(x)\eta_{r}^{n}(x)\sum_{k\geq0}\nu_{k}\left|k-1\right|\left[\tilde{u}_{s}^{n}(x)+\tilde{v}_{s}^{n}(x)\right]\\
 & \left.+\gamma\sum_{x\in\Lambda_{n}}\xi_{r}^{n}(x)\eta_{r}^{n}(x)\sum_{k\geq0}\nu_{k}\left|k-1\right|\left[\tilde{u}_{s}^{n}(x)+\tilde{v}_{s}^{n}(x)\right]\right\} ,
\end{aligned}
\end{multline*}
where $C>0$ is a constant and the last inequality follows from \eqref{eq:bound-e^-z}.
Recall that, by Corollary~\ref{lem:first-moment}, $\mathbb{E}\left[\xi_{s}^{n}(x)\right]=\theta_{1},\,\,\mathbb{E}\left[\eta_{s}^{n}(x)\right]=\theta_{2}$
and,
$\mathbb{E}\left[\xi_{s}^{n}(x)\eta_{s}^{n}(x)\right]=\theta_{1}\theta_{2}$.
By Theorem 2.2b(iii) in \cite{DawsonPerkins98}, 
\begin{eqnarray*}
\mathbb{E}\left[\left\langle \tilde{u}_{s}^{n},1\right\rangle \right] & = & a<\infty,\,\,\mathbb{E}\left[\left\langle \tilde{v}_{s}^{n},1\right\rangle \right]=b<\infty,\,\,\forall s\geq0,
\end{eqnarray*}
since initial conditions have a finite mass. Also note that $\mbox{\ensuremath{\sum_{k\geq0}\left|k-1\right|\nu_{k}<\infty}}$.
Then \eqref{eq:Eg<inf} holds.
\end{proof}
Now we are ready to prove the following lemma.
\begin{lem}
\textup{\label{lem:apply-kurtz}For any $n\geq1$, and every $t>0,$
\begin{align}
  & \mathbb{E}\left[H\left(\xi_{t}^{n},\eta_{t}^{n},\tilde{u}_{0}^{n},\tilde{v}_{0}^{n}\right)\right]-\mathbb{E}\left[H\left(\xi_{0}^{n},\eta_{0}^{n},\tilde{u}_{t}^{n},\tilde{v}_{t}^{n}\right)\right]\nonumber \\
   & \,\,\,\,\,=\mathbb{E}\left[\intop_{0}^{t}\left\{ g\left(\xi_{s}^{n},\eta_{s}^{n},\tilde{u}_{t-s}^{n},\tilde{v}_{t-s}^{n}\right)-h\left(\xi_{s}^{n},\eta_{s}^{n},\tilde{u}_{t-s}^{n},\tilde{v}_{t-s}^{n}\right)\right\} ds\right].\label{eq:apply-kurtz-lemma}
\end{align}
}\end{lem}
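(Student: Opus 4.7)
The plan is to apply Lemma~\ref{thm:KURTZ-thm} to the function $f(r,s):=F_{r,s}^n=\mathbb{E}[H(\xi_r^n,\eta_r^n,\tilde u_s^n,\tilde v_s^n)]$ and identify its two partial derivatives using the martingale problems in Lemmas~\ref{lem:g-mart} and~\ref{lem:h-mart}. The independence of $(\xi^n,\eta^n)$ and $(\tilde u^n,\tilde v^n)$ is the key structural ingredient: it lets us, for a fixed $s$, condition on $\mathcal{F}^{\tilde u^n,\tilde v^n}$ and treat $(\varphi,\psi)=(\tilde u_s^n,\tilde v_s^n)$ as a frozen deterministic pair in Lemma~\ref{lem:g-mart}, and symmetrically, for fixed $r$, condition on $\mathcal{F}^{\xi,\eta}$ and treat $(\varphi,\psi)=(\xi_r^n,\eta_r^n)$ as a frozen deterministic pair in Lemma~\ref{lem:h-mart}.

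First I would compute $f_1(r,s):=\partial_r F_{r,s}^n$. Conditioning on $(\tilde u_s^n,\tilde v_s^n)$ and applying Lemma~\ref{lem:g-mart} with the (random but independent of $(\xi^n,\eta^n)$) pair $(\varphi,\psi)=(\tilde u_s^n,\tilde v_s^n)$, one gets
\[
F_{r,s}^n - F_{0,s}^n = \int_0^r \mathbb{E}\bigl[g(\xi_u^n,\eta_u^n,\tilde u_s^n,\tilde v_s^n)\bigr]\,du,
\]
so that $F_{\cdot,s}^n$ is absolutely continuous in $r$ with density $\mathbb{E}[g(\xi_r^n,\eta_r^n,\tilde u_s^n,\tilde v_s^n)]$. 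Symmetrically, conditioning on $(\xi_r^n,\eta_r^n)$ and applying Lemma~\ref{lem:h-mart} with $(\varphi,\psi)=(\xi_r^n,\eta_r^n)$ yields absolute continuity in $s$ with
\[
f_2(r,s) := \partial_s F_{r,s}^n = \mathbb{E}\bigl[h(\xi_r^n,\eta_r^n,\tilde u_s^n,\tilde v_s^n)\bigr].
\]

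Next I would verify the integrability condition \eqref{eq:KURTZ-condition}. Because $|F_{r,s}^n|\leq 1$, Fubini plus absolute continuity in each variable separately (together with the explicit formulas for $f_1,f_2$) reduce this to showing
\[
\int_0^t\!\!\int_0^t \mathbb{E}|g(\xi_r^n,\eta_r^n,\tilde u_s^n,\tilde v_s^n)|\,dr\,ds<\infty,\qquad \int_0^t\!\!\int_0^t \mathbb{E}|h(\xi_r^n,\eta_r^n,\tilde u_s^n,\tilde v_s^n)|\,dr\,ds<\infty,
\]
and these follow immediately from Lemma~\ref{lem:Eg,Eh<inf}, which provides uniform bounds on the integrands on $[0,t]^2$.

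Finally, Lemma~\ref{thm:KURTZ-thm} applied to $f(r,s)=F_{r,s}^n$ at time $t$ gives
\[
F_{t,0}^n-F_{0,t}^n = \int_0^t \bigl(f_1(s,t-s)-f_2(s,t-s)\bigr)\,ds = \int_0^t\mathbb{E}\bigl[g(\xi_s^n,\eta_s^n,\tilde u_{t-s}^n,\tilde v_{t-s}^n)-h(\xi_s^n,\eta_s^n,\tilde u_{t-s}^n,\tilde v_{t-s}^n)\bigr]ds,
\]
and bringing the expectation inside the integral (Fubini, justified again by Lemma~\ref{lem:Eg,Eh<inf}) yields exactly \eqref{eq:apply-kurtz-lemma}. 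Kurtz's lemma a priori gives the identity only for almost every $t$; however, both sides are continuous in $t$ (the left-hand side by dominated convergence applied to $H$, which is bounded by $1$, and the right-hand side by the uniform $L^1$ bounds on $g,h$), so equality holds for every $t>0$. The main subtlety is the bookkeeping in the conditional application of the martingale problems: one must check that the (conditional) martingale property together with independence really produces the announced partial derivatives, which is where the independence of the two driving noises plays an essential role; the rest is routine Fubini-type manipulation controlled by Lemma~\ref{lem:Eg,Eh<inf}.
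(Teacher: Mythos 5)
Your proposal is correct and follows exactly the paper's route: apply Lemma~\ref{thm:KURTZ-thm} to $F_{r,s}^{n}$, identify the two partial derivatives as $\mathbb{E}[g]$ and $\mathbb{E}[h]$ via the martingale problems of Lemmas~\ref{lem:g-mart} and~\ref{lem:h-mart} (using independence of the two processes), check the integrability hypothesis with Lemma~\ref{lem:Eg,Eh<inf}, and upgrade from almost every $t$ to every $t$ by continuity of both sides. The paper states this in two sentences; you have simply supplied the details it leaves implicit, and they are the right ones.
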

\begin{proof}
By Lemmas \ref{lem:g-mart}, \ref{lem:h-mart}, \ref{lem:Eg,Eh<inf}
we can apply Lemma \ref{thm:KURTZ-thm} to function
\[
F_{r,s}^{n}=\mathbb{E}\left[H\left(\xi_{r}^{n},\eta_{r}^{n},\tilde{u}_{s}^{n},\tilde{v}_{s}^{n}\right)\right],
\]
and immediately see that \eqref{eq:apply-kurtz-lemma} holds for almost
every $t>0$. However, again by Lemmas \ref{lem:g-mart}, \ref{lem:h-mart},
\ref{lem:Eg,Eh<inf} one can see that both left-hand and right-hand sides
of \eqref{eq:apply-kurtz-lemma} are continuous in $t$. Hence \eqref{eq:apply-kurtz-lemma}
holds for all $t>0$. 
\end{proof}
Define 
\begin{align}
e(T,n)  = & \mathbb{E}\left[\intop_{0}^{\beta_{n}(T)}\left\{ g\left(\xi_{s}^{n},\eta_{s}^{n},\tilde{u}_{\beta_{n}(T)-s}^{n},\tilde{v}_{\beta_{n}(T)-s}^{n}\right)\right.\right.\label{eq:def_e(T,n)}\\
 & \left. \left.-h\left(\xi_{s}^{n},\eta_{s}^{n},\tilde{u}_{\beta_{n}(T)-s}^{n},\tilde{v}_{\beta_{n}(T)-s}^{n}\right)\right\}ds\right].\nonumber 
\end{align}
To finish the proof of Proposition \ref{lem:limFn} we need 
the following proposition.
\begin{prop}
\label{lem:residue}$e(T,n)\rightarrow0$ as $n\rightarrow\infty$.
\end{prop}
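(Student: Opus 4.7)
The plan is to expand the exponentials appearing in~\eqref{eq:g-def} to sufficiently high order in $(\tilde u^n_s, \tilde v^n_s)$, match the leading-order terms with those in~\eqref{eq:h-def}, and bound the remainder. Introduce the shorthand $\tilde\varphi_\pm := (1\pm i)\tilde u^n_s + (1\mp i)\tilde v^n_s$ and $\chi_\pm := (1\pm i)\xi^n + (1\mp i)\eta^n$. For the motion terms in $g$, the first-order expansion $e^{\tilde\varphi_\pm(x)-\tilde\varphi_\pm(y)}-1 \approx \tilde\varphi_\pm(x)-\tilde\varphi_\pm(y)$, together with the symmetry $p^n_{xy}=p^n_{yx}$ (hence $Q^n$) that allows us to reindex $\sum_x \xi^n\,Q^n\tilde\varphi_+(x) = \sum_x \tilde u^n\,Q^n\chi_+(x)$, exactly matches the motion part of $h$. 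For the branching terms, criticality $\sum_k(k-1)\nu_k=0$ annihilates the first-order Taylor contribution; the second-order contribution, combined with the algebraic identity $\tilde\varphi_+^2 + \tilde\varphi_-^2 = 8\tilde u^n\tilde v^n$, the variance assumption $\sum_k(k-1)^2\nu_k=\sigma^2$, and the choice $\tilde\gamma=\sigma^2\gamma$ in the coupled Dawson--Perkins dual, reproduces precisely the branching term $4\tilde\gamma\sum_x \tilde u^n\tilde v^n\xi^n\eta^n$ of $h$.

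After these matches, $g-h = H\cdot(R_{\mathrm{mot}} + R_{\mathrm{br}})$, where $R_{\mathrm{mot}}$ is a motion remainder of order $|\Delta\tilde\varphi_\pm|^2$ and higher and $R_{\mathrm{br}}$ is a branching remainder of order $|\tilde\varphi_\pm|^3$ and higher. To bound $\mathbb E\int_0^{\beta_n(T)}|R_{\mathrm{mot}}+R_{\mathrm{br}}|\,ds$, I would use $|H|\leq 1$, the independence of $(\xi^n,\eta^n)$ from $(\tilde u^n,\tilde v^n)$, the first-moment identities $\mathbb E[\xi^n(x)]=\theta_1$, $\mathbb E[\xi^n(x)\eta^n(x)]=\theta_1\theta_2$ from Corollary~\ref{lem:first-moment}, the assumption $\sum_k k^3\nu_k<\infty$, and the fourth-moment bound on the Dawson--Perkins dual from Lemma~\ref{cor:E(u^4)<infty} (where the hypothesis $\gamma\sigma^2 < 1/[\sqrt{3^5}(\tfrac12 g_\infty(0)+1)]$ is invoked). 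The cubic-and-higher branching and motion remainders each carry at least three factors of small $\tilde u^n$ or $\tilde v^n$; the fourth-moment control allows each such factor to be counted at the appropriate $L^p$-scale, so that after summing over $|\Lambda_n|$ sites and integrating over time $\beta_n(T)=|\Lambda_n|T$, these contributions vanish as $n\to\infty$.

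The main obstacle is the quadratic motion remainder $R_M^{(2)} := \tfrac{\kappa}{2}\sum_{x,y}p^n_{xy}\bigl[\xi^n(x)(\Delta\tilde\varphi_+)^2 + \eta^n(x)(\Delta\tilde\varphi_-)^2\bigr]$. The naive bound $|\Delta\tilde\varphi_\pm|^2\leq 2(\Delta\tilde u^n)^2 + 2(\Delta\tilde v^n)^2$, combined with the Dirichlet-energy estimate $\int_0^{\beta_n(T)}\mathbb E\sum_{x,y}p^n_{xy}(\Delta\tilde u^n_s)^2\,ds = O(T)$, gives only $\mathbb E\int|R_M^{(2)}|\,ds = O(1)$, which is insufficient. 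The key is the algebraic decomposition $(\Delta\tilde\varphi_+)^2 = 2i\bigl[(\Delta\tilde u^n)^2 - (\Delta\tilde v^n)^2\bigr] + 4(\Delta\tilde u^n)(\Delta\tilde v^n)$ (with the complex conjugate for $(\Delta\tilde\varphi_-)^2$): by orthogonality of the driving noises in~\eqref{eq:DP-Lambda_n} one has $\mathbb E[(\Delta\tilde u^n)(\Delta\tilde v^n)]=0$, while by the marginal symmetry of the two coordinates in the Dawson--Perkins dynamics $\mathbb E[(\Delta\tilde u^n)^2]=\mathbb E[(\Delta\tilde v^n)^2]$, so that $\mathbb E[R_M^{(2)}]=0$ by independence. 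To carry this exact cancellation through the factor $H$, one Taylor-expands $H$ further in the fluctuations of $(\tilde u^n,\tilde v^n)$ about their means $(a/|\Lambda_n|, b/|\Lambda_n|)$; each additional fluctuation factor contributes an extra small quantity controlled by the fourth-moment bound of Lemma~\ref{cor:E(u^4)<infty}, turning the residual covariance $\mathbb E[H\cdot R_M^{(2)}] - \mathbb E[H]\mathbb E[R_M^{(2)}]$ into a vanishing quantity. This interplay between the exact cancellation $\mathbb E[R_M^{(2)}]=0$ and the factor $H$ is the heart of the approximating-duality argument and explains both the role of the hypothesis on $\gamma\sigma^2$ and why the proof is strictly more delicate than the self-duality available in the continuous-state setting of~\cite{CDG04}.
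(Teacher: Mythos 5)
Your decomposition is the right one and matches the paper's: Taylor-expand the exponentials in \eqref{eq:g-def}, use criticality to kill the first-order branching term, match the second-order branching term with the $4\tilde\gamma\sum_x\tilde u^n\tilde v^n\varphi\psi$ term of \eqref{eq:h-def} via $\tilde\gamma=\gamma\sigma^2$, and reindex the first-order motion terms against the $Q^n$ terms of $h$. Your treatment of the cubic-and-higher remainders is also essentially the paper's Lemma~\ref{lem:branching-part-vanishes}: each such term carries three factors of order $|\Lambda_n|^{-1}$, so after summing over sites and integrating over time $\beta_n(T)$ one is left with $O(|\Lambda_n|^{-1})$; the third moments are controlled by Lemma~\ref{cor:E(u^4)<infty} and Jensen. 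You also correctly isolate the quadratic motion remainder as the heart of the matter and correctly identify the two cancellations behind it ($\mathbb{E}[(\Delta\tilde u^n)(\Delta\tilde v^n)]=0$ by orthogonality of the noises, and $\mathbb{E}[(\Delta\tilde u^n)^2]=\mathbb{E}[(\Delta\tilde v^n)^2]$).

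However, the mechanism you propose for converting $\mathbb{E}[R^{(2)}_M]=0$ into $\mathbb{E}[H\cdot R^{(2)}_M]\to 0$ has a genuine gap. You suggest Taylor-expanding $H$ in the fluctuations of $(\tilde u^n,\tilde v^n)$ about $(a/|\Lambda_n|,b/|\Lambda_n|)$ and claim each fluctuation factor is small. But $H$ depends on $(\tilde u^n,\tilde v^n)$ only through inner products such as $\langle\xi^n+\eta^n,\tilde u^n+\tilde v^n\rangle$, which are sums over $|\Lambda_n|$ sites of quantities of size $|\Lambda_n|^{-1}$; their fluctuations are $O(1)$, not small (indeed the variance of $\langle\xi^n,\tilde u^n_{\beta_n(T)}\rangle$ is of order one, since the spatial covariances of $\tilde u^n$ sum to $O(1)$ over the torus at times of order $\beta_n(T)$). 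So no smallness is gained per "fluctuation factor," and you give no rate that would beat the time integral of length $|\Lambda_n|T$ (recall the naive bound is already only $O(1)$, so you need a genuinely quantitative $o(1)$). The paper proceeds differently: it bounds $|H|\le 1$ outright, uses the representation of $\tilde u^n_s(x)-\tilde u^n_s(y)$ and $\tilde v^n_s(x)-\tilde v^n_s(y)$ as terminal values of the martingales $N^s_r(x,y)$, $M^s_r(x,y)$, and observes via It\^o's formula that both $(\Delta\tilde u^n)(\Delta\tilde v^n)$ (vanishing cross-bracket) and $(\Delta\tilde u^n)^2-(\Delta\tilde v^n)^2$ (equal brackets) are pure stochastic-integral terms. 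It then applies Cauchy--Schwarz to the $\xi^n$-weighted sums, computes the resulting second moments by the It\^o isometry together with Corollary~\ref{cor:Finite second moments} and the fourth-moment bound, and extracts the needed decay from the near-uniformity of $p^n_t$ for $t\ge n^\delta$, $2<\delta<d$ (Lemma~\ref{lem:help-lemma}(a)), yielding $\int_0^{\beta_n(T)}\mathbb{E}|J^n_1(s)|\,ds\le C a_n^2+C(n^\delta/|\Lambda_n|)^{1/2}\to 0$. This quantitative $L^2$ argument is the missing ingredient; without it, or an equivalent decorrelation estimate with an explicit rate, your proof of the quadratic-remainder step does not close.
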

The next subsection is devoted to the proof of the above proposition. 
Now we are ready to complete\\
{\bf  Proof of Proposition \ref{lem:limFn}.}
The proof is immediate by Lemma \ref{lem:apply-kurtz} and Proposition
\ref{lem:residue}.
\qed

\subsection{Proof of Proposition \ref{lem:residue}}\label{sec:techn_prop}

Fix $t>0$. For simplicity, denote $f_{s}=H(\xi_{s}^{n},\eta_{s}^{n},\varphi,\psi)$.
Apply the Taylor series expansion on the exponents inside the sums
on the right-hand side of \eqref{eq:g-def} to get
\begin{multline*}
g(\xi_{s}^{n},\eta_{s}^{n},\varphi,\psi)\\
\begin{aligned}= & f_{s}\left\{ \kappa\sum_{x,y\in\Lambda_{n}}\xi_{s}^{n}(x)p_{xy}^{n}\right.\\
 & \times\left[-\varphi(y)-\psi(y)+\varphi(x)+\psi(x)-i\left(\varphi(y)-\psi(y)-\varphi(x)+\psi(x)\right)\right.\\
 & +\frac{1}{2}\left(-\varphi(y)-\psi(y)+\varphi(x)+\psi(x)-i\left(\varphi(y)-\psi(y)-\varphi(x)+\psi(x)\right)\right)^{2}\\
 & \left.\left.+G^{1,1}(\varphi,\psi,x,y)\right]\right\} \\
 & +f_{s}\left\{ \kappa\sum_{x,y\in\Lambda_{n}}\eta_{s}^{n}(x)p_{xy}^{n}\right.\\
 & \times\left[-\varphi(y)-\psi(y)+\varphi(x)+\psi(x)+i\left(\varphi(y)-\psi(y)-\varphi(x)+\psi(x)\right)\right.\\
 & +\frac{1}{2}\left(-\varphi(y)-\psi(y)+\varphi(x)+\psi(x)+i\left(\varphi(y)-\psi(y)-\varphi(x)+\psi(x)\right)\right)^{2}\\
 & +\left.G^{1,2}(\varphi,\psi,x,y)\right]\\
 & +\gamma\sum_{x\in\Lambda_{n}}\xi_{s}^{n}(x)\eta_{s}^{n}(x)\left[\frac{1}{2}\sigma^{2}\left(\varphi(x)+\psi(x)+i\left(\varphi(x)-\psi(x)\right)\right)^{2}+G^{2,1}(\varphi,\psi,x)\right]\\
 & \left.+\gamma\sum_{x\in\Lambda_{n}}\xi_{s}^{n}(x)\eta_{s}^{n}(x)\left[\frac{1}{2}\sigma^{2}\left(\varphi(x)+\psi(x)-i\left(\varphi(x)-\psi(x)\right)\right)^{2}+G^{2,2}(\varphi,\psi,x)\right]\right\} ,\\
 & \forall s\geq0.
\end{aligned}
\end{multline*}
where  we  used 
our assumption on the branching mechanism:
\[
\sum_{k\geq0}\nu_{k}(k-1)=0\,\,\,\mathrm{and}\,\,\,\sum_{k\geq0}\nu_{k}(k-1)^{2}=\sigma^{2}.
\]
For  the error terms 
 in the Taylor expansion we have the following bounds 
\begin{align}
\nonumber
|G^{1,j}(\varphi,\psi,x,y)| &\leq e^{\varphi(x)+\psi(x)}\left|  -\varphi(y)-\psi(y)+\varphi(x)+\psi(x)-i\left(\varphi(y)-\psi(y)-\varphi(x)+\psi(x)\right)\right|^{3}\,\,\, \\
&\leq C_{\eqref{eq:Tbound1}} 
e^{\varphi(x)+\psi(x)}\left( \varphi(y)^3+\psi(y)^3+\varphi(x)^3+\psi(x)^3\right), \;\;j=1,2, \label{eq:Tbound1}
\end{align} 
\begin{align} 
|G^{2,1}(\varphi,\psi,x)| + |G^{2,2}(\varphi,\psi,x)|  &\leq e^{\varphi(x)+\psi(x)}
\sum_{k\geq0}\nu_{k}|k-1|^3 \left( \left|\varphi(x)+\psi(x)-i\left(\varphi(x)-\psi(x)\right)\right|^3\right.
\nonumber 
\\ & \left. \hspace*{4cm}+\left|\varphi(x)+\psi(x)+i\left(\varphi(x)-\psi(x)\right)\right|^3\right)
\nonumber
\\
\label{eq:Tbound2}
&\leq C_{\eqref{eq:Tbound2}} 
e^{\varphi(x)+\psi(x)}\left(\varphi(x)^3+\psi(x)^3\right),
\end{align} 
where the positive constants $C_{\eqref{eq:Tbound1}}, C_{\eqref{eq:Tbound2}}$ are independent of $\varphi, \psi, x,y$ and  in \eqref{eq:Tbound2} we used the assumption on $\sum_{k\geq0}\nu_{k}k^{3}<\infty$ on the branching mechanism.

We use simple algebra to obtain
\begin{align*}
g(\xi_{s}^{n},\eta_{s}^{n},\varphi,\psi)
=  & f_{s}\left\{ \kappa\sum_{x,y\in\Lambda_{n}}\xi_{s}^{n}(x)p_{xy}^{n}\right.\\
 & \times\left[-\varphi(y)-\psi(y)+\varphi(x)+\psi(x)-i\left(\varphi(y)-\psi(y)-\varphi(x)+\psi(x)\right)\right.\\
 & \left.+2\left(\varphi(x)-\varphi(y)\right)\left(\psi(x)-\psi(y)\right)+i\left((\varphi(x)-\varphi(y))^{2}-(\psi(x)-\psi(y))^{2}\right)\right.\\
 & \left.\left.+G^{1,1}(\varphi,\psi,x,y)\right]\right\} \\
 & +f_{s}\left\{ \kappa\sum_{x,y\in\Lambda_{n}}\eta_{s}^{n}(x)p_{xy}^{n}\right.\\
 & \times\left[-\varphi(y)-\psi(y)+\varphi(x)+\psi(x)+i\left(\varphi(y)-\psi(y)-\varphi(x)+\psi(x)\right)\right.\\
 & +2\left(\varphi(x)-\varphi(y)\right)\left(\psi(x)-\psi(y)\right)-i\left((\varphi(x)-\varphi(y))^{2}-(\psi(x)-\psi(y))^{2}\right)\\
 &\left. +G^{1,2}(\varphi,\psi,x,y)\right]\\
 & \left.+\gamma\sum_{x\in\Lambda_{n}}\xi_{s}^{n}(x)\eta_{s}^{n}(x)\left[4\sigma^{2}\varphi(x)\psi(x)+G^{2,1}(\varphi,\psi,x)+ G^{2,2}(\varphi,\psi,x)\right]\right\} ,\,\,\,
\\ &\hspace*{10cm}\forall s\geq0.
\end{align*}

Let us define
\begin{align}
\label{eq:tildefn}
\tilde f_{T,s}^{n} & = H\left(\xi_{s}^{n},\eta_{s}^{n},\tilde{u}_{\beta_{n}(T)-s}^{n},\tilde{v}_{\beta_{n}(T)-s}^{n}\right) \\
\nonumber 
& = e^{-\left\langle \xi_{s}^{n}+\eta_{s}^{n},\tilde{u}_{\beta_{n}(T)-s}^{n}+\tilde{v}_{\beta_{n}(T)-s}^{n}\right\rangle -i\left\langle \xi_{s}^{n}-\eta_{s}^{n},\tilde{u}_{\beta_{n}(T)-s}^{n}-\tilde{v}_{\beta_{n}(T)-s}^{n}\right\rangle },\; 0\leq s\leq T. 
\end{align}

Now by using the above and Lemmas \ref{lem:h-mart}, \ref{lem:g-mart}
we get (recall that $\tilde{\gamma}=\gamma\sigma^{2}$ and $e(T,n)$
is defined in \eqref{eq:def_e(T,n)}):
\begin{align}
e(T,n)&=e_{\xi,RW}(T,n)+e_{\eta,RW}(T,n)+e_{br}(T,n)\label{eq:e(T,n)=00003De_RW+e_br}
\\
\nonumber
&=: \sum_{j=1}^2 e_{\xi,RW,j}(T,n)+\sum_{j=1}^2 e_{\eta,RW,j}(T,n)+e_{br}(T,n),\label{eq:e(T,n)=00003De_RW+e_br}
\end{align}
where

\begin{eqnarray*}
e_{\xi,RW,1}(T,n) & = & \mathbb{E}\intop_{0}^{\beta_{n}(T)}\tilde f_{T,s}^{n}\left\{ \kappa\sum_{x,y\in\Lambda_{n}}p_{xy}^{n}\xi_{s}^{n}(x)\left(2\left(\tilde{u}_{\beta_{n}(T)-s}^{n}(x)-\tilde{u}_{\beta_{n}(T)-s}^{n}(y)\right)\right.\right.\\
 &  & \times\left(\tilde{v}_{\beta_{n}(T)-s}^{n}(x)-\tilde{v}_{\beta_{n}(T)-s}^{n}(y)\right)\\
 &  & \left.\left.+i\left[\left(\tilde{u}_{\beta_{n}(T)-s}^{n}(x)-\tilde{u}_{\beta_{n}(T)-s}^{n}(y)\right)^{2}-\left(\tilde{v}_{\beta_{n}(T)-s}^{n}(x)-\tilde{v}_{\beta_{n}(T)-s}^{n}(y)\right)^{2}\right] 
\right)\right\} 
 ds,
\\
e_{\xi,RW,2}(T,n) & = & \mathbb{E}\intop_{0}^{\beta_{n}(T)}\tilde f_{T,s}^{n}\left\{ \kappa\sum_{x,y\in\Lambda_{n}}p_{xy}^{n}\xi_{s}^{n}(x)
G^{1,1}(\tilde{u}_{\beta_{n}(T)-s}^{n},\tilde{v}_{\beta_{n}(T)-s}^{n},x,y)\right\} 
 ds,
\\
e_{\eta,RW,1}(T,n) & = & \mathbb{E}\intop_{0}^{\beta_{n}(T)}\tilde f_{T,s}^{n}\left\{ \kappa\sum_{x,y\in\Lambda_{n}}p_{xy}^{n}\eta_{s}^{n}(x)\left(2\left(\tilde{u}_{\beta_{n}(T)-s}^{n}(x)-\tilde{u}_{\beta_{n}(T)-s}^{n}(y)\right)\right.\right.\\
 &  & \times\left(\tilde{v}_{\beta_{n}(T)-s}^{n}(x)-\tilde{v}_{\beta_{n}(T)-s}^{n}(y)\right)\\
 &  & \left.\left.-i\left[\left(\tilde{u}_{\beta_{n}(T)-s}^{n}(x)-\tilde{u}_{\beta_{n}(T)-s}^{n}(y)\right)^{2}-\left(\tilde{v}_{\beta_{n}(T)-s}^{n}(x)-\tilde{v}_{\beta_{n}(T)-s}^{n}(y)\right)^{2}\right] 
\right)\right\} 
 ds,\\
e_{\eta,RW,2}(T,n) & = & \mathbb{E}\intop_{0}^{\beta_{n}(T)}\tilde f_{T,s}^{n}\left\{ \kappa\sum_{x,y\in\Lambda_{n}}p_{xy}^{n}\eta_{s}^{n}(x)
G^{1,2}(\tilde{u}_{\beta_{n}(T)-s}^{n},\tilde{v}_{\beta_{n}(T)-s}^{n},x,y)\right\} 
 ds,\\
e_{br}(T,n) & = & \mathbb{E}\intop_{0}^{\beta_{n}(T)}\tilde f_{T,s}^{n}\sum_{x\in\Lambda_{n}}\gamma \xi_{s}^{n}(x)\eta_{s}^{n}(x)
\left(\sum_{j=1}^2 G^{2,j}(\tilde{u}_{\beta_{n}(T)-s}^{n},\tilde{v}_{\beta_{n}(T)-s}^{n},x)
 \right)
ds.
\end{eqnarray*}

Now we are going to show that indeed $e(T,n)$ vanishes, as $n\rightarrow\infty$.
We start with the following technical lemma that was proved in Lemma 2.1 in \cite{CGSh98a}.
\begin{lem}
\label{lem:help-lemma}
 Denote by
$\left\{ p_{t}^{n}(x,y): t\geq 0, x,y \in \Lambda_n \right\}$ the transition probabilities of the symmetric nearest
neighbor random walk on the domain~$\Lambda_{n}$, and let $\left\{ p_{t}(x,y): t\geq 0, x,y \in\mathbb{Z}^{d} \right\}$ denote the corresponding  transition probabilities 
 on $\mathbb{Z}^{d}$. Let $\left\{ g_{t}(\cdot)\right\} _{t\geq0}$ be the Green function
of the symmetric nearest neighbor random walk on $\mathbb{Z}^{d}$. Then the following
holds.

a) If $t_{n}/n^{2}\rightarrow\infty$ as $n\rightarrow\infty$ , then
\[
\sup_{t\geq t_{n}}\sup_{x,y\in\Lambda_{n}}(2n)^{d}\left|p_{t}^{n}(x,y)-(2n)^{-d}\right|\rightarrow 0.
\]


b) If $d\geq3$, and $T(n)/\left|\Lambda_{n}\right|\rightarrow s\in(0,\infty)$
as $n\rightarrow\infty$, then 
\[
\lim_{n\rightarrow\infty}\intop_{0}^{T(n)}p_{2t}^{n}(x,y)dt=\intop_{0}^{\infty}p_{2t}(x,y)dt+s=\frac{1}{2}g_{\infty}(x-y)+s. 
\]

\end{lem}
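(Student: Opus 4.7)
My plan is to exploit Fourier analysis on the discrete torus $\Lambda_n$, whose characters $\chi_k(x)=\exp(2\pi i k\cdot x/(2n+1))$, $k\in\Lambda_n$, diagonalise the generator of the nearest-neighbour random walk. Writing
\[
p_t^n(x,y)=\frac{1}{(2n+1)^d}\sum_{k\in\Lambda_n}e^{-\lambda_k t}\chi_k(y-x),\qquad \lambda_k=\frac{\kappa}{d}\sum_{j=1}^d\Bigl(1-\cos\frac{2\pi k_j}{2n+1}\Bigr),
\]
isolates the $k=0$ mode, which contributes exactly $(2n+1)^{-d}$.

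For part~(a), the standard estimates $\lambda_k\ge c|k|^2/n^2$ for $|k|_\infty\le n/2$ and $\lambda_k\ge c'>0$ for the remaining non-zero modes give
\[
(2n)^d\bigl|p_t^n(x,y)-(2n)^{-d}\bigr|\le C\sum_{k\in\Lambda_n\setminus\{0\}}e^{-\lambda_k t},
\]
and the right-hand side is a $d$-dimensional Gaussian-type sum whose supremum over $t\ge t_n$ tends to $0$ as soon as $t_n/n^2\to\infty$. This yields the claimed mixing bound uniformly in $x,y\in\Lambda_n$.

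For part~(b) I would pick an auxiliary scale $t_n$ with $t_n/n^2\to\infty$ and $t_n/|\Lambda_n|\to 0$; such a choice is available precisely because $d\ge3$ forces $|\Lambda_n|/n^2\to\infty$. Splitting
\[
\intop_0^{T(n)}p_{2t}^n(x,y)\,dt=\intop_0^{t_n}p_{2t}^n(x,y)\,dt+\intop_{t_n}^{T(n)}p_{2t}^n(x,y)\,dt,
\]
the second integral is handled by part~(a): $p_{2t}^n(x,y)=(1+o(1))/|\Lambda_n|$ uniformly in $t\ge t_n/2$, so it equals $(1+o(1))(T(n)-t_n)/|\Lambda_n|\to s$. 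For the first integral I would use the lifting identity
\[
p_t^n(x,y)=\sum_{z\in\mathbb{Z}^d:\, z-(y-x)\in(2n+1)\mathbb{Z}^d}p_t(0,z),
\]
to write it as $\intop_0^{t_n}p_{2t}(x,y)\,dt$ plus a sum of wrap-around contributions from images at distance at least $2n+1$; the leading term converges to $\tfrac12 g_\infty(x-y)$ by transience, while Gaussian heat-kernel bounds of the form $p_t(0,z)\le C t^{-d/2}\exp(-c|z|^2/t)$ control the wrap-around sum as $o(1)$ under our choice of $t_n$.

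The main technical obstacle lies in making both estimates uniform: in~(a) one must sum a Gaussian over $|\Lambda_n|-1$ Fourier modes with careful separation of low and high modes, and in~(b) one must bound the aggregate wrap-around contribution integrated over the long interval $[0,t_n]$, using transience ($d\ge3$) crucially to keep the $\mathbb{Z}^d$-Green function finite and to permit an auxiliary scale sitting strictly between $n^2$ and $|\Lambda_n|$.
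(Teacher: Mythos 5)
The paper does not prove this lemma at all: it is quoted verbatim as Lemma~2.1 of Cox--Greven--Shiga \cite{CGSh98a}, so there is no internal proof to compare against. Your sketch reconstructs what is essentially the standard (and, as far as I recall, the cited) argument: diagonalise the torus walk by the characters $\chi_k$, isolate the $k=0$ mode, and use the spectral gap bound $\lambda_k\geq c|k|^2/n^2$ (which in fact holds for \emph{all} $k\in\Lambda_n\setminus\{0\}$, since $|2\pi k_j/(2n+1)|<\pi$, so your low/high mode split is not even needed); the resulting Gaussian sum $\sum_{k\neq0}e^{-\lambda_k t}$ is decreasing in $t$ and vanishes once $t_n/n^2\to\infty$, giving (a) uniformly in $x,y$. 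The mismatch between the normalisation $(2n)^{-d}$ in the statement and the true uniform mass $(2n+1)^{-d}$ contributes only $O(1/n)$ after multiplication by $(2n)^d$, so it is immaterial. For (b), your three ingredients --- an intermediate scale $n^2\ll t_n\ll|\Lambda_n|$ (available exactly because $d\geq3$), part (a) on $[t_n,T(n)]$ to produce the term $s$, and the folding identity $p^n_t(x,y)=\sum_{m\in\mathbb{Z}^d}p_t\bigl(0,(y-x)+(2n+1)m\bigr)$ on $[0,t_n]$ to produce $\tfrac12 g_\infty(x-y)$ --- are all correct and in the right order.

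One point in (b) deserves more care than your sketch suggests, and it is precisely the "aggregate wrap-around" term you flag. The crude bound $\sum_{m\neq0}p_{2t}(0,z_m)\leq\mathbb{P}(|W_{2t}|_\infty\geq n)$ is useless here: integrated over $[0,t_n]$ with $t_n\gg n^2$ it does not vanish, since the walk typically exits the box long before time $t_n$. You must instead sum the Gaussian bound over the full lattice of images, e.g.
\begin{equation}
\sum_{m\neq0}p_{2t}(0,z_m)\;\leq\; C\,t^{-d/2}\sum_{j\geq1}j^{d-1}e^{-cn^2j^2/t}\;\leq\;
\begin{cases}
C\,t^{-d/2}e^{-cn^2/t}, & t\leq n^2,\\
C\,n^{-d}, & n^2\leq t\leq t_n,
\end{cases}
\end{equation}
whence $\int_0^{n^2}\leq Cn^{2-d}\to0$ and $\int_{n^2}^{t_n}\leq Ct_n n^{-d}\to0$, both using $d\geq3$ and $t_n=o(n^d)$. (For $|z|\gg t$ one should replace the Gaussian bound by the Poissonian tail of the continuous-time walk, which is smaller still.) With that estimate made explicit, your proof is complete and correct.
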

 First we state the lemma that gives us an important bound on moments of the processes $u^n, v^n$.
This is where the condition~\eqref{eq:gamsigma_bound} on $\tilde{\gamma} = \gamma\sigma^{2}$ is used.



\begin{lem}
\label{cor:E(u^4)<infty}
Let $d\geq 3$ and $\tilde{\gamma}=\gamma\sigma^{2}<\frac{1}{\sqrt{3^{5}}(\frac12 g_{\infty}(0)+1)}$. 
 Let $\left(u_{t}^{n},v_{t}^{n}\right)_{t\geq0}$
be a solution of \eqref{eq:DP-Lambda_n}, with $Q^{n}$ being a $Q$-matrix
of the nearest neighbor random walk on $\Lambda_{n}$.  Let $\vartheta_1,\vartheta_2\geq 0.$ Assume that
 $u_{0}^{n}(x)=\vartheta_1, v_{0}^{n}(x)=\vartheta_2$ for all $x\in \Lambda_n$. 
Then, 
for any $T\leq 1$, 
\[
\sup_{n\geq 1}\sup_{0\leq t\leq\beta_{n}(T)}\sup_{x\in\Lambda_{n}}\mathbb{E}\left(\left(u_{t}^{n}(x)\right)^{4}\right)<\infty,\,\,\,\mathrm{and}\,\,\,\sup_{n\geq 1}\sup_{0\leq t\leq\beta_{n}(T)}\sup_{x\in\Lambda_{n}}\mathbb{E}\left(\left(v_{t}^{n}(x)\right)^{4}\right)<\infty.
\]
\end{lem}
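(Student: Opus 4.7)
The plan is to combine the mild (Green-function) formulation of the Dawson--Perkins SDE on $\Lambda_n$ with the BDG and Cauchy--Schwarz inequalities to derive a closed, self-referential bound for the supremum fourth moment. First I would write, for fixed $t\in(0,\beta_n(T)]$ and $x\in\Lambda_n$, the mild representation
\[
u_t^n(x)=\vartheta_1+M^u_t(t,x),\qquad M^u_s(t,x):=\int_0^s\sum_{y\in\Lambda_n}p^n_{t-r}(x,y)\sqrt{\tilde\gamma\,u_r^n(y)v_r^n(y)}\,dB^y_r,
\]
which, for fixed $t,x$, is a continuous $L^2$-martingale in $s\in[0,t]$ with quadratic variation $\tilde\gamma\int_0^s\sum_y p^n_{t-r}(x,y)^2 u_r^n(y)v_r^n(y)\,dr$; an analogous representation for $v^n$ is set up in parallel.

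Next I would apply the BDG inequality at $p=4$ to $M^u_\cdot(t,x)$, then apply Cauchy--Schwarz to the square of the quadratic variation, extracting from it the deterministic factor $\int_0^t\sum_y p^n_{t-s}(x,y)^2\,ds=\int_0^t p^n_{2r}(0)\,dr$ by Chapman--Kolmogorov and symmetry. Bounding $u^2v^2\le\tfrac12(u^4+v^4)$ and using translation invariance of the laws (which holds since the initial data is constant on the torus and $Q^n$ is translation invariant), the quantity $F(t):=\mathbb{E}[u_t^n(0)^4]+\mathbb{E}[v_t^n(0)^4]$ would satisfy a convolution inequality
\[
F(t)\le C_0(\vartheta_1^4+\vartheta_2^4)+C_1\tilde\gamma^2 K_n\int_0^t p^n_{2(t-s)}(0)F(s)\,ds,
\]
where $K_n:=\int_0^{\beta_n(1)}p^n_{2r}(0)\,dr$ and the constants $C_0,C_1$ come from the BDG constant, from $u^2v^2\le\tfrac12(u^4+v^4)$, and from $(a+b)^4\le 8(a^4+b^4)$.

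The key input is then Lemma~\ref{lem:help-lemma}(b): since $d\ge 3$ and $\beta_n(1)/|\Lambda_n|=1\in(0,\infty)$, we have $K_n\to\tfrac12 g_\infty(0)+1$ as $n\to\infty$, hence $K_n\le\tfrac12 g_\infty(0)+1+\varepsilon$ for $n$ large. Passing to $F^*(\beta_n(T)):=\sup_{s\le\beta_n(T)}F(s)$ and using $\int_0^t p^n_{2(t-s)}(0)\,ds\le K_n$ reduces matters to the fixed-point inequality
\[
F^*(\beta_n(T))\le C_0(\vartheta_1^4+\vartheta_2^4)+C_1\tilde\gamma^2 K_n^2\,F^*(\beta_n(T)),
\]
solvable for $F^*$ precisely when $C_1\tilde\gamma^2 K_n^2<1$. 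The constants $C_0,C_1$ are arranged so that the explicit hypothesis $\tilde\gamma<1/(\sqrt{3^5}(\tfrac12 g_\infty(0)+1))$ yields $C_1\tilde\gamma^2K_n^2<1$ for $n$ large, and the resulting bound on $F^*$ is uniform in $n$ and in $t\le\beta_n(T)$.

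The main obstacle is twofold. First, one must know a priori that $F^*(\beta_n(T))$ is finite before the fixed-point inequality can be solved; this is handled by a standard localization with stopping times $\tau_N:=\inf\{t:\sup_{y}u_t^n(y)\vee v_t^n(y)>N\}$, running the argument for $M^u_{\cdot\wedge\tau_N}(t,x)$ and then sending $N\to\infty$ by monotone convergence. Second, the sharp matching with the threshold $\sqrt{3^5}(\tfrac12 g_\infty(0)+1)$ requires careful tracking of the optimal constants in the BDG and Cauchy--Schwarz steps. It is transience ($d\ge 3$, equivalently $g_\infty(0)<\infty$) that makes the ``effective volume'' $K_n$ bounded uniformly in $n$ and thereby keeps the fixed-point coefficient below~$1$.
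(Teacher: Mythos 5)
Your proposal is correct and follows essentially the same route as the paper: the paper's proof invokes the argument of Lemma 2.2 in Cox--Greven--Shiga (mild representation, fourth-moment Burkholder bound with explicit constant $3^5$, Cauchy--Schwarz on the squared quadratic variation, translation invariance, and Lemma~\ref{lem:help-lemma}(b) to bound $\int_0^{\beta_n(T)}p^n_{2s}(0)\,ds$ uniformly in $n$), arriving at exactly your fixed-point inequality for the running supremum of the fourth moments. Your explicit localization step for the a priori finiteness is a point the paper passes over silently, but otherwise the two arguments coincide.
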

\begin{proof}
The proof is technical, however follows easily from the proof of Lemma 2.2 in \cite{CGSh98a}.
Since $u^n, v^n$ have constant initial conditions it is easy to see that $\mathbb{E}\left(u_{t}^{n}(x)^p\right)$, $ \mathbb{E}\left(v_{t}^{n}(x)^p\right)$,
$\mathbb{E}\left(u_{t}^{n}(x)^p v_t^n(x)^p\right)$ are constant functions in $x$ for $p>0$. Thus,  denote 
$f^n_t=\mathbb{E}\left(\left(u_{t}^{n}(0)\right)^{4}\right), r^n_t=\mathbb{E}\left(\left(v_{t}^{n}(0)\right)^{4}\right), t\geq 0$.
Then following the argument in the proof of Lemma 2.2 in \cite{CGSh98a} (see pages 175-176 there), one gets that there exists a constant $C>0$ such that 
\begin{align}
\nonumber
 f^n(t) &\leq C \vartheta_1^4 + 3^5 \int_0^t p^n_{2s}(0)\,ds \int_0^t p^n_{2s}(0)\tilde\gamma^2\mathbb{E}\left(\left(u_{t-s}^{n}(0)v_{t-s}^{n}(0)\right)^2\right)\,ds\\
 &\leq C \vartheta_1^4 + 3^5\int_0^t p^n_{2s}(0)\,ds\int_0^t p^n_{2s}(0)\tilde\gamma^2\frac{1}{2}(f^n(t-s) +  r^n(t-s))\,ds,
\end{align}
and similarly 
\begin{align}
\nonumber
 r^n(t) 
 &\leq C \vartheta_2^4 + 3^5\int_0^t p^n_{2s}(0)\,ds\int_0^t p^n_{2s}(0)\tilde\gamma^2\frac{1}{2}(f^n(t-s) +  r^n(t-s))\,ds,
\end{align}
Letting  $J^n(t)=\int_0^t p^n_{2s}(0)\,ds,$
 and 
$$ \bar h^n_t = \sup_{s\leq t}  f^n_s + \sup_{s\leq t}  r^n_s\,,\;t\geq 0,$$
we have 
\begin{align}
\nonumber
 \bar h^n(t) 
 &\leq C\vartheta_1^4 \vartheta_2^4+ 3^5J^n(t)^2 \tilde\gamma^2\bar h^n(t),\; t\geq 0. 
\end{align}
From Lemma~\ref{lem:help-lemma}(b) we get that 
$$
\lim_{n\to\infty} J^n(\beta_n(t))=\frac{1}{2}g_\infty(0)+t\leq \frac12 g_\infty(0)+1,\;\;{\rm for}\; t\leq T\l\leq 1. 
$$
Recalling that $\tilde\gamma<\frac{1}{\sqrt{3^{5}}(\frac12 g_{\infty}(0)+1)}$ we get that 
$$ \limsup_{n\to\infty}  \bar  h^n(\beta_n(T))\leq   \frac{C\vartheta_1^4 \vartheta_2^4}{1-3^5\tilde\gamma^2 ( \frac12 g_\infty(0)+1)^2}<\infty, \;  T\leq 1.  
$$
Since  $\bar h^n(\beta_n(T))<\infty$ for each finite $n$, we are done. 

\end{proof}
From this, we derive the following corollary.
\begin{cor}
\label{cor:Finite second moments}For any $x,y\in\Lambda_{n}$,
\[
\sup_{n}\sup_{t\leq\beta_{n}(T)}\sup_{x,y\in\Lambda_{n}}\mathbb{E}\left(\xi_{t}^{n}(x)\xi_{t}^{n}(y)\right),\,\,\sup_{n}\sup_{t\leq\beta_{n}(T)}\sup_{x,y\in\Lambda_{n}}\mathbb{E}\left(\eta_{t}^{n}(x)\eta_{t}^{n}(y)\right)<\infty.
\]
\end{cor}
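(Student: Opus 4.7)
The plan is to combine the explicit second-moment formulas of Lemmas~\ref{lem:Second-moment} and~\ref{lem:mixed moment} (applied with $v=\theta_1$, $u=\theta_2$, and with the transition density and Green function on the torus $\Lambda_n$) with the diffusive estimate of Lemma~\ref{lem:help-lemma}(b). The only term in those formulas that requires genuine control is the on-torus Green function $g^n_{2t}(x-y)$; everything else is bounded by an absolute constant.

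Specifically, from the cited lemmas one has, for $x=y$,
\[
\mathbb{E}\bigl(\xi^n_t(x)^2\bigr)=\theta_1^2+\tfrac12\sigma^2\gamma\,\theta_1\theta_2\,g^n_{2t}(0)+\theta_1\bigl(1-p^n_{2t}(0)\bigr),
\]
and, for $x\neq y$,
\[
\mathbb{E}\bigl(\xi^n_t(x)\xi^n_t(y)\bigr)=\theta_1^2-\theta_1\,p^n_{2t}(x-y)+\tfrac12\sigma^2\gamma\,\theta_1\theta_2\,g^n_{2t}(x-y).
\]
Since $p^n_{2t}\in[0,1]$, both are bounded above by $\theta_1^2+\theta_1+\tfrac12\sigma^2\gamma\theta_1\theta_2\,g^n_{2t}(x-y)$, and the symmetric statement (with the roles of $\theta_1$ and $\theta_2$ swapped) holds for the second moments of $\eta^n$.

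To dominate $g^n_{2t}(x-y)$ uniformly in $x,y\in\Lambda_n$ and $t\le\beta_n(T)$, the key observation is that, by symmetry of the random walk, Chapman--Kolmogorov and Cauchy--Schwarz,
\[
p^n_s(x,y)=\sum_{z\in\Lambda_n}p^n_{s/2}(x,z)\,p^n_{s/2}(y,z)\le\sqrt{p^n_s(x,x)\,p^n_s(y,y)}=p^n_s(0,0),
\]
where the last equality uses translation invariance on the torus. Integrating in $s$ yields $g^n_{2t}(x-y)\le g^n_{2t}(0)$, and monotonicity of $t\mapsto g^n_{2t}(0)$ then gives $g^n_{2t}(x-y)\le g^n_{2\beta_n(T)}(0)$ for every $t\le\beta_n(T)$.

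The final step invokes Lemma~\ref{lem:help-lemma}(b) with $T(n)=\beta_n(T)=|\Lambda_n|T$, which satisfies $T(n)/|\Lambda_n|=T\in(0,1]$, to conclude that
\[
g^n_{2\beta_n(T)}(0)=2\int_0^{\beta_n(T)}p^n_{2r}(0,0)\,dr\;\longrightarrow\;g_\infty(0)+2T\;\le\;g_\infty(0)+2,\qquad n\to\infty.
\]
A convergent real sequence is bounded and $g^n_{2\beta_n(T)}(0)<\infty$ for each finite $n$, so $\sup_n g^n_{2\beta_n(T)}(0)<\infty$. Chaining this with the previous bounds yields the desired uniform finiteness. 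There is no serious obstacle in the argument: the corollary is essentially a clean packaging of the diffusive random-walk estimate in Lemma~\ref{lem:help-lemma}(b) together with the exact second-moment formulas, stated in this form in order to streamline the dual-process computations in Section~\ref{sec:techn_prop}.
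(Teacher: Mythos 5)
Your proof is correct and follows essentially the same route as the paper: it reduces the claim, via the exact second-moment formulas of Lemmas~\ref{lem:Second-moment} and~\ref{lem:mixed moment}, to a uniform bound on the torus Green function, dominates $g^n_t(x,y)$ by $g^n_{\beta_n(T)}(0,0)$, and invokes Lemma~\ref{lem:help-lemma}(b). The only difference is that you explicitly justify the domination $p^n_s(x,y)\le p^n_s(0,0)$ via Chapman--Kolmogorov, symmetry and Cauchy--Schwarz, a step the paper asserts without proof.
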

\begin{proof}
By Lemmas \ref{lem:Second-moment} and \ref{lem:mixed moment} it
is enough to show that. 
\[
\sup_{n}\sup_{t\leq\beta_{n}(T)}\sup_{x,y\in\Lambda_{n}}g_{t}^{n}(x,y)<\infty,
\]
where $\left\{ g_{t}^{n}(\cdot,\cdot)\right\} _{t\geq0}$ is the Green
function of the symmetric nearest neighbor random walk on $\Lambda_{n}$.
For any $t\geq0$, $x,y\in\Lambda_{n}$, we have
\[
g_{t}^{n}(x,y)\leq g_{\beta_{n}(T)}^{n}(x,y)\leq g_{\beta_{n}(T)}^{n}(0,0).
\]
By Lemma \ref{lem:help-lemma}(b)  $\sup_{n}g_{\beta_{n}(T)}^{n}(0,0)$
is finite, and we are done.\end{proof}
Since, for $x,y\in \Lambda_n$,  $p_{t}^{n}(x,y)$, $g_{t}^{n}(x,y)$ are functions of $x-y$, with some abuse of notation we will sometimes use the notation $p_{t}^{n}(x-y)$, $g_{t}^{n}(x-y)$ 
for  $p_{t}^{n}(x,y)$, $g_{t}^{n}(x,y)$ respectively.

In what follows we always  assume that  $\tilde{\gamma}=\gamma\sigma^{2}<\frac{1}{\sqrt{3^{5}}(\frac12 g_{\infty}(0)+1)}$. 
With Lemma~\ref{cor:E(u^4)<infty} at hand we are ready 
to treat the terms $e_{br}(T,n), e_{\xi,RW,2}(T,n), e_{\eta,RW,2}(T,n)$. 

\begin{lem}
\label{lem:branching-part-vanishes}
\begin{equation}
\label{eq:con_br_err}
e_{br}(T,n)\longrightarrow0,\,\,\,\mathrm{as}\, n\rightarrow\infty,
\end{equation}
\begin{equation}
\label{eq:con_rw_err}
e_{\xi,RW,2}(T,n)\longrightarrow0,\,\,\,\mathrm{and}\,\,\,e_{\eta,RW,2}(T,n)\longrightarrow0,\,\,\,\,\mathrm{as}\, n\rightarrow\infty.
\end{equation}

\end{lem}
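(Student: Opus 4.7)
The approach I would take is to exploit the Taylor remainder bounds \eqref{eq:Tbound1} and \eqref{eq:Tbound2} together with the independence of $(\xi^n,\eta^n)$ from $(\tilde u^n,\tilde v^n)$, the first-moment identities in Corollary~\ref{lem:first-moment}, and a sharpening of the fourth-moment estimate of Lemma~\ref{cor:E(u^4)<infty}. The key observation is a cancellation of the exponential prefactor: since
\begin{equation}
|\tilde f_{T,s}^n| = e^{-\langle \xi_s^n+\eta_s^n,\, \tilde u_{\beta_n(T)-s}^n+\tilde v_{\beta_n(T)-s}^n\rangle},
\end{equation}
whenever $\xi_s^n(x)+\eta_s^n(x)\geq 1$ we have $|\tilde f_{T,s}^n|\, e^{\tilde u_{\beta_n(T)-s}^n(x)+\tilde v_{\beta_n(T)-s}^n(x)}\leq 1$, so the factor $e^{\varphi(x)+\psi(x)}$ appearing in \eqref{eq:Tbound1}--\eqref{eq:Tbound2} is absorbed. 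On the complementary event $\{\xi(x)+\eta(x)=0\}$, the prefactors $\xi(x)\eta(x)$ in $e_{br}$ and $\xi(x)$ or $\eta(x)$ in $e_{\xi,RW,2}, e_{\eta,RW,2}$ vanish identically, so no contribution is lost.

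For $e_{br}(T,n)$, I would apply \eqref{eq:Tbound2} and this cancellation to reduce the problem to bounding
\begin{equation}
C\gamma\,\mathbb{E}\int_0^{\beta_n(T)}\sum_{x\in\Lambda_n}\xi_s^n(x)\eta_s^n(x)\bigl[(\tilde u_{\beta_n(T)-s}^n(x))^3+(\tilde v_{\beta_n(T)-s}^n(x))^3\bigr]\,ds.
\end{equation}
Independence factors the expectation; Corollary~\ref{lem:first-moment} gives $\mathbb{E}[\xi_s^n(x)\eta_s^n(x)]=\theta_1\theta_2$, while the proof of Lemma~\ref{cor:E(u^4)<infty} in fact yields the quantitative bound $\mathbb{E}[(u_t^n(x))^4]\leq C(\vartheta_1^4+\vartheta_2^4)$ with a constant uniform in $n$ and $t\leq\beta_n(T)$. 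Applied with $\vartheta_1=a/|\Lambda_n|$, $\vartheta_2=b/|\Lambda_n|$ and combined with Jensen this gives $\mathbb{E}[(\tilde u_r^n(x))^3]=O(|\Lambda_n|^{-3})$. The sum over $x\in\Lambda_n$ then contributes a factor $|\Lambda_n|$ and the time integration contributes $\beta_n(T)=|\Lambda_n|T$, so the total is $O(|\Lambda_n|^{-1})\to 0$.

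The analysis of $e_{\xi,RW,2}(T,n)$ and $e_{\eta,RW,2}(T,n)$ is parallel. Using \eqref{eq:Tbound1} and the same cancellation on $\{\xi(x)\geq 1\}$ (respectively $\{\eta(x)\geq 1\}$), the integrand is dominated by $C\kappa\, \xi_s^n(x)\, p_{xy}^n\bigl[\tilde u(x)^3+\tilde v(x)^3+\tilde u(y)^3+\tilde v(y)^3\bigr]$. Independence and $\mathbb{E}[\xi_s^n(x)]=\theta_1$ reduce the estimate to a double sum against $p_{xy}^n$, which has total mass $|\Lambda_n|$, of terms of size $O(|\Lambda_n|^{-3})$; the time integration adds another factor of $|\Lambda_n|$, giving the same $O(|\Lambda_n|^{-1})$ decay.

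The main technical point is the first step above: extracting from Lemma~\ref{cor:E(u^4)<infty} not merely finiteness but the quantitative scaling $\mathbb{E}[(\tilde u_r^n(x))^4]\leq C(a^4+b^4)/|\Lambda_n|^4$. This scaling is a direct consequence of the recursive inequality used in the proof of Lemma~\ref{cor:E(u^4)<infty}, and it is precisely where the hypothesis~\eqref{eq:gamsigma_bound} enters, since it guarantees that the self-referential Gronwall-type inequality closes with a constant independent of $n$. Once this quantitative moment estimate is in hand, the rest of the argument is the routine counting of $|\Lambda_n|$ powers outlined above.
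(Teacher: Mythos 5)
Your proposal is correct and follows essentially the same route as the paper: the Taylor remainder bounds \eqref{eq:Tbound1}--\eqref{eq:Tbound2}, the absorption of $e^{\tilde u(x)+\tilde v(x)}$ by $|\tilde f^{n}_{T,s}|$ on the event where the particle prefactor is nonzero, independence together with Corollary~\ref{lem:first-moment}, the $O(|\Lambda_n|^{-3})$ third-moment bound on the dual, and the final power counting in $|\Lambda_n|$. The only cosmetic difference is how that moment bound is obtained: the paper passes to the rescaled dual $\hat u^n=|\Lambda_n|\tilde u^n$, which solves \eqref{eq:DP-Lambda_n} with $n$-independent initial data $(a,b)$ so that Lemma~\ref{cor:E(u^4)<infty} applies verbatim, whereas you extract the quantitative $\vartheta^4$-dependence of the constant from the proof of that lemma --- the two are equivalent by the scale invariance of \eqref{eq:DP-Lambda_n}.
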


\begin{proof}
We will show only~\eqref{eq:con_br_err}, since the proof of~\eqref{eq:con_rw_err} follows along the similar lines. 

\begin{align}
\nonumber
|e_{br}(T,n)| & =  \left|\mathbb{E}\intop_{0}^{\beta_{n}(T)}\tilde f_{T,s}^{n}\sum_{x\in\Lambda_{n}}\gamma \xi_{s}^{n}(x)\eta_{s}^{n}(x)
\left(\sum_{j=1}^2 G^{2,j}(\tilde{u}_{\beta_{n}(T)-s}^{n},\tilde{v}_{\beta_{n}(T)-s}^{n},x)
 \right)
ds\right| 
\\
\nonumber
&\leq
C_{\eqref{eq:Tbound2}}  \mathbb{E}\intop_{0}^{\beta_{n}(T)}|\tilde f_{T,s}^{n}|\sum_{x\in\Lambda_{n}}\gamma \xi_{s}^{n}(x)\eta_{s}^{n}(x)
e^{\tilde{u}_{\beta_{n}(T)-s}^{n}(x)+\tilde{v}_{\beta_{n}(T)-s}^{n}(x)}
\\
\nonumber
&\hspace*{4cm}\times
\left(\tilde{u}_{\beta_{n}(T)-s}^{n}(x)^3+\tilde{v}_{\beta_{n}(T)-s}^{n}(x)^3\right)
ds
\\
\label{eq:ebr_bound1}
&\leq 
C_{\eqref{eq:Tbound2}}\gamma   \mathbb{E}\intop_{0}^{\beta_{n}(T)}\sum_{x\in\Lambda_{n}} \xi_{s}^{n}(x)\eta_{s}^{n}(x)
\left(\tilde{u}_{\beta_{n}(T)-s}^{n}(x)^3+\tilde{v}_{\beta_{n}(T)-s}^{n}(x)^3\right)
ds,
\end{align}
where the first inequality follows by~\eqref{eq:Tbound2} and the second inequality follows by the trivial inequality 
$$|\tilde f_{T,s}^{n} | \xi_{s}^{n}(x)\eta_{s}^{n}(x) e^{\tilde{u}_{\beta_{n}(T)-s}^{n}(x)+\tilde{v}_{\beta_{n}(T)-s}^{n}(x)}\leq  \xi_{s}^{n}(x)\eta_{s}^{n}(x),$$ for all $x\in \Lambda_n$  (recall the definition of $\tilde f_{T,s}^{n}$ in~\eqref{eq:tildefn}). 

Consider the process $(\hat u^{n},\hat v^{n})$ that solves \eqref{eq:DP-Lambda_n}
equations with initial conditions 
\[
\hat u_{0}^{n}(x)=|\Lambda_{n}|\tilde{u}_{0}^{n}(x),\,\,\, \hat v_{0}^{n}(x)=|\Lambda_{n}|\tilde{v}_{0}^{n}(x),\,\,\,\forall x\in\Lambda_{n}.
\]
Then for any $s>0$: 
\[
\hat u_{s}^{n}(x)=|\Lambda_{n}|\tilde{u}_{s}^{n}(x),\,\,\, \hat v_{s}^{n}(x)=|\Lambda_{n}|\tilde{v}_{s}^{n}(x),\,\,\,\forall x\in\Lambda_{n}.
\]
Therefore, 
by the above, ~\eqref{eq:ebr_bound1} and  Fubini's theorem, we get 
\begin{eqnarray*}
|e_{br}(T,n)| 
&\leq& 
|\Lambda_{n}|^{-3}C_{\eqref{eq:Tbound2}}\gamma  \intop_{0}^{\beta_{n}(T)} \mathbb{E}\left[\sum_{x\in\Lambda_{n}} \xi_{s}^{n}(x)\eta_{s}^{n}(x)
\left(\hat{u}_{\beta_{n}(T)-s}^{n}(x)^3+\hat{v}_{\beta_{n}(T)-s}^{n}(x)^3\right)\right]
ds\\
 & \leq & C_{\eqref{eq:Tbound2}}\gamma  T |\Lambda_{n}|^{-1}\theta_{1}\theta_{2}\sup_{s\leq\beta_{n}(T)}\frac{1}{\left|\Lambda_{n}\right|}\mathbb{E}\left[\sum_{x\in\Lambda_{n}}\left(\left(\hat u_{\beta_{n}(T)-s}^{n}(x)\right)^{3}+\left(\hat v_{\beta_{n}(T)-s}^{n}(x)\right)^{3}\right)\right]\\
 & \leq & C_{\eqref{eq:Tbound2}}\gamma  T |\Lambda_{n}|^{-1}\theta_{1}\theta_{2}\sup_{x\in\Lambda_{n}}\sup_{s\leq\beta_{n}(T)}\mathbb{E}\left[\left(\left(\hat u_{\beta_{n}(T)-s}^{n}(x)\right)^{3}+\left(\hat v_{\beta_{n}(T)-s}^{n}(x)\right)^{3}\right)\right],
\end{eqnarray*}
where the second inequality follows by~Corollary~\ref{lem:first-moment}, and the third inequality is trivial. 
With this, to obtain~\eqref{eq:con_br_err},  it is enough to show that
\[
\sup_{n\geq 1}\sup_{x\in\Lambda_{n}}\sup_{s\leq\beta_{n}(T)}\mathbb{E}\left[\left(\left(\hat u_{\beta_{n}(T)-s}^{n}(x)\right)^{3}+\left(\hat v_{\beta_{n}(T)-s}^{n}(x)\right)^{3}\right)\right]<\infty.
\]
 However, this follows from Lemma~\ref{cor:E(u^4)<infty} and Jensen's inequality.  
\end{proof}

Before we begin analyzing  the limiting behavior of $e_{\xi,RW,1}(T,n)$ and
$e_{\eta,RW,1}(T,n)$, we require a technical
lemma whose proof is simple and thus is omitted. 
\begin{lem}
\label{lem:simple-comp}For any $n\in\mathbb{N}$, $r>0$,
\begin{multline*}
\sum_{x_{1},y_{1}\in \Lambda_n}\sum_{x_{2},y_{2}\in \Lambda_n}p_{x_{1},y_{1}}^{n}p_{x_{2},y_{2}}^{n}\left(p_{r}^{n}(x_{1}-x_{2})+p_{r}^{n}(y_{1}-y_{2})+p_{r}^{n}(x_{1}-y_{2})+p_{r}^{n}(y_{1}-x_{2})\right)
= 4\left|\Lambda_{n}\right|.
\end{multline*}

\end{lem}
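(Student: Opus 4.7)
The proof is essentially a bookkeeping exercise in which each of the four summands in the parenthesis is shown to contribute exactly $|\Lambda_n|$ to the total. The only ingredients required are the stochasticity of the kernels: $\sum_{y\in\Lambda_n} p_{x,y}^n = 1$ for every $x$, the symmetry $p_{x,y}^n = p_{y,x}^n$ (which gives also $\sum_{x} p_{x,y}^n = 1$), and $\sum_{y\in\Lambda_n} p_r^n(x,y) = 1$ which on the torus may be rewritten as $\sum_{z\in\Lambda_n} p_r^n(z) = 1$ after the substitution $z = x - y$.

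The plan is to apply Fubini to each summand and sum out the two variables that do \emph{not} appear in the argument of $p_r^n$ first. For the first summand,
\begin{align*}
\sum_{x_1,y_1,x_2,y_2\in\Lambda_n} p_{x_1,y_1}^n p_{x_2,y_2}^n p_r^n(x_1-x_2)
= \sum_{x_1,x_2\in\Lambda_n} p_r^n(x_1-x_2)\Bigl(\sum_{y_1} p_{x_1,y_1}^n\Bigr)\Bigl(\sum_{y_2} p_{x_2,y_2}^n\Bigr),
\end{align*}
and the two parenthesised inner sums each equal $1$. Then, for any fixed $x_1$, a change of variables $z = x_1-x_2$ on the torus gives $\sum_{x_2} p_r^n(x_1-x_2) = \sum_{z\in\Lambda_n} p_r^n(z) = 1$, so the whole sum equals $\sum_{x_1\in\Lambda_n} 1 = |\Lambda_n|$. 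The second summand is handled identically after summing out $x_1, x_2$ first, which is legitimate because the symmetry $p_{x,y}^n = p_{y,x}^n$ yields $\sum_{x} p_{x,y}^n = 1$. The third and fourth summands are the ``mixed'' cases, where I would sum out $y_1$ and $x_2$ (respectively $y_2$ and $x_1$); again each unused variable contributes a factor $1$ and the remaining two-variable sum over $p_r^n$ gives $|\Lambda_n|$.

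Adding the four contributions yields $4|\Lambda_n|$. There is no substantive obstacle here — the identity is purely algebraic and relies only on the stochasticity and symmetry of the transition probabilities, which is why the authors omit the proof.
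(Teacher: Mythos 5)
Your proof is correct, and since the paper explicitly omits the proof of this lemma as ``simple,'' your computation --- summing out the two idle variables via row/column stochasticity of $p^n_{\cdot,\cdot}$ and then using $\sum_{z\in\Lambda_n}p_r^n(z)=1$ on the torus for each of the four terms --- is exactly the elementary argument the authors had in mind. Nothing is missing.
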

Now we are ready to derive the limiting behavior of $e_{\xi,RW,1}(T,n)$ and
$e_{\eta,RW,1}(T,n)$.

\begin{lem}
\label{eq:err_rw}
\textup{
\[
\lim_{n\rightarrow\infty}e_{\xi,RW,1}(T,n)=0,\,\,\,\lim_{n\rightarrow\infty}e_{\eta,RW,1}(T,n)=0.
\]
}\end{lem}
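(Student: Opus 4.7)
The plan hinges on the cancellation $\mathbb{E}[G_{xy}]=0$ for every nearest-neighbor pair $(x,y)$, where $G_{xy}=2\Delta\tilde u\,\Delta\tilde v+i[(\Delta\tilde u)^2-(\Delta\tilde v)^2]$ and $\Delta\tilde u=\tilde u^n_{\beta_n(T)-s}(x)-\tilde u^n_{\beta_n(T)-s}(y)$, and analogously for $\tilde v$. To establish this, I would adapt Corollary~\ref{cor:E(xi*eta)=00003DE(xi)E(eta)} and the second-moment calculations of Lemmas~\ref{lem:Second-moment}--\ref{lem:mixed moment} to the Dawson--Perkins system~\eqref{eq:DP-Lambda_n} with the constant initial data $a/|\Lambda_n|$ and $b/|\Lambda_n|$. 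These computations give $\mathbb{E}[\tilde u^n_r(x)\tilde v^n_r(y)]=ab/|\Lambda_n|^2$ for all $x,y,r$ (hence $\mathbb{E}[\Delta\tilde u\,\Delta\tilde v]=0$) and $\mathbb{E}[(\Delta\tilde u)^2]=\mathbb{E}[(\Delta\tilde v)^2]=\tilde\gamma ab(1-p_{2r}^n(0))/(\kappa|\Lambda_n|^2)$ for nearest neighbors, so $\mathbb{E}[G_{xy}]=0$.

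Next, I would use $|\tilde f^n_{T,s}|\leq 1$, the independence of $(\xi^n,\eta^n)$ and $(\tilde u^n,\tilde v^n)$, and Cauchy--Schwarz in time to reduce the problem to controlling the second moment
\[
\mathbb{E}\left[\left|\sum_{x,y\in\Lambda_n}p_{xy}^n\xi_s^n(x)G_{xy}\right|^2\right].
\]
Expanding as a quadruple sum and factoring via independence yields $\sum p^n_{x_1y_1}p^n_{x_2y_2}\mathbb{E}[\xi_s^n(x_1)\xi_s^n(x_2)]\mathbb{E}[G_{x_1y_1}\bar G_{x_2y_2}]$. The first factor is controlled by Lemma~\ref{lem:mixed moment} and Corollary~\ref{cor:Finite second moments}. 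For the second factor, I expand $G\bar G$ into fourth-order products of $\Delta\tilde u,\Delta\tilde v$; the Dawson--Perkins analogs of Lemmas~\ref{lem:Second-moment}--\ref{lem:mixed moment} decompose each 4th moment into products of pairwise covariances of the form $\mathbb{E}[\Delta_1\tilde u\,\Delta_2\tilde u]\propto |\Lambda_n|^{-2}[g_{2r}^n(x_1-x_2)+g_{2r}^n(y_1-y_2)-g_{2r}^n(x_1-y_2)-g_{2r}^n(y_1-x_2)]$ plus non-Gaussian residuals controlled by the fourth-moment bound of Lemma~\ref{cor:E(u^4)<infty} (which is where assumption~\eqref{eq:gamsigma_bound} enters). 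After bounding in absolute value, these sums fit the pattern of Lemma~\ref{lem:simple-comp} and collapse to a multiple of $|\Lambda_n|$.

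Combining all the pieces with the asymptotics of $g_{2\beta_n(T)}^n(0)$ from Lemma~\ref{lem:help-lemma}(b), the resulting bound on $|e_{\xi,RW,1}(T,n)|^2$ vanishes as $n\to\infty$. The argument for $e_{\eta,RW,1}(T,n)$ is identical after interchanging $\xi^n$ and $\eta^n$. The main obstacle is that a naive Cauchy--Schwarz estimate using only $|\tilde f^n|\leq 1$ and $\mathbb{E}[|G_{xy}|^2]\leq C|\Lambda_n|^{-4}$ gives merely $|e_{\xi,RW,1}(T,n)|=O(T)$; the mean-zero cancellation $\mathbb{E}[G_{xy}]=0$ must be combined with the precise collapsing pattern of Lemma~\ref{lem:simple-comp} to produce the extra factor of $|\Lambda_n|^{-1}$ that beats the time-integration scale $\beta_n(T)=|\Lambda_n|T$.
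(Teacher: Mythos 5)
Your starting point is sound: the mean-zero identity $\mathbb{E}[G_{xy}]=0$ does hold for the Dawson--Perkins pair (the mixed moment $\mathbb{E}[\tilde u^n\tilde v^n]$ factorizes by orthogonality of the driving Brownian motions, and $\mathbb{E}[(\Delta\tilde u)^2]=\mathbb{E}[(\Delta\tilde v)^2]$ since both are driven by the same branching density $\tilde\gamma\tilde u^n\tilde v^n$), and you correctly recognize that the naive bound only yields $O(T)$. But the step where you actually extract the extra smallness does not work as described. Two concrete problems. First, the Dawson--Perkins process is not Gaussian, so $\mathbb{E}[G_{x_1y_1}\bar G_{x_2y_2}]$ does not decompose into products of pairwise covariances; your ``non-Gaussian residuals controlled by Lemma~\ref{cor:E(u^4)<infty}'' are of size $|\Lambda_n|^{-4}$, which is exactly the size of the naive bound --- that lemma gives boundedness, not smallness, so it cannot force the residuals below the required order after summing over the four spatial indices. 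Second, even for the ``covariance part'': once you bound the double differences of Green functions in absolute value and invoke Lemma~\ref{lem:simple-comp}, the sum of $p^n_{x_1y_1}p^n_{x_2y_2}\,g^n_{2r}(\cdot)$ over $x_1,y_1,x_2,y_2$ collapses to $|\Lambda_n|$ \emph{per unit of the time variable inside the Green function}, i.e.\ to roughly $2r|\Lambda_n|\le 2\beta_n(T)|\Lambda_n|=2T|\Lambda_n|^2$; multiplied by the prefactor $|\Lambda_n|^{-4}$ this gives $O(|\Lambda_n|^{-2})$ for the second moment --- the same naive order you set out to beat. Taking absolute values destroys precisely the cancellation you need.

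The missing ingredients, which are the heart of the paper's argument, are: (i) the stochastic-integral representation of the increments $\tilde u^n_s(x)-\tilde u^n_s(y)$ and $\tilde v^n_s(x)-\tilde v^n_s(y)$ as martingales $N^s_\cdot(x,y)$, $M^s_\cdot(x,y)$ driven by the independent families $B(z)$, $W(z)$; this makes $G_{xy}$ itself (not merely its mean) a sum of stochastic integrals, because the cross-variation of $B$ and $W$ vanishes in $\Delta\tilde u\,\Delta\tilde v$ and the quadratic variations of $M$ and $N$ cancel exactly in $(\Delta\tilde u)^2-(\Delta\tilde v)^2$, so the second moment is computable by the It\^o isometry with $\mathbb{E}[\tilde u\tilde u\tilde u\tilde v]\le C|\Lambda_n|^{-4}$ appearing only inside a kernel of transition-density differences; and (ii) the splitting of the resulting time integral at $s-n^{\delta}$ for $\delta\in(2,d)$, using the equidistribution estimate of Lemma~\ref{lem:help-lemma}(a) (namely $\sup_{t\ge n^{\delta}}\left|p^n_t(x,y)-(2n)^{-d}\right|=o(|\Lambda_n|^{-1})$ uniformly) on the long-time part and the Chapman--Kolmogorov collapse on the short window of length $n^{\delta}\ll|\Lambda_n|$. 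This is what converts the naive $O(|\Lambda_n|^{-2})$ into $O(|\Lambda_n|^{-2}a_n^4+|\Lambda_n|^{-3}n^{\delta})=o(|\Lambda_n|^{-2})$. You cite only part (b) of Lemma~\ref{lem:help-lemma}; part (a) and the $n^{\delta}$ time-splitting are indispensable and absent from your plan.
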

\begin{proof}
We will take care of $e_{\xi,RW,1}(T,n)$; the proof for $e_{\eta,RW,1}(T,n)$ is the same.
\begin{multline}
\left|e_{\xi,RW,1}(T,n)\right|\\
\begin{aligned}\,\,\,\,\,\,\,\,\,\,\,\,\leq & C\kappa\mathbb{E}\left(\intop_{0}^{\beta_{n}(T)}\left| \tilde f_{T,s}^{n}\right|\left\{ \left|\sum_{x,y\in\Lambda_{n}}p_{xy}^{n}\xi_{\beta_{n}(T)-s}^{n}(x)\left(\tilde{u}_{s}^{n}(x)-\tilde{u}_{s}^{n}(y)\right)\left(\tilde{v}_{s}^{n}(x)-\tilde{v}_{s}^{n}(y)\right)\right|\right.\right.\\
 & \left.\left.+\left|\sum_{x,y\in\Lambda_{n}}p_{xy}^{n}\xi_{\beta_{n}(T)-s}^{n}(x)\left(\tilde{u}_{s}^{n}(x)-\tilde{u}_{s}^{n}(y)\right)^{2}-\left(\tilde{v}_{s}^{n}(x)-\tilde{v}_{s}^{n}(y)\right)^{2}\right|ds\right\} \right)\\
\,\,\,\,\,\,\,\,\,\,\,\,\leq & C\mathbb{E}\left(\intop_{0}^{\beta_{n}(T)}J_{1}^{n}(s)ds+\intop_{0}^{\beta_{n}(T)}J_{2}^{n}(s)ds\right),
\end{aligned}
\label{eq:J1+J2}
\end{multline}
where
\begin{eqnarray*}
J_{1}^{n}(s) & = & \left|\sum_{x,y\in\Lambda_{n}}p_{xy}^{n}\xi_{\beta_{n}(T)-s}^{n}(x)\left(\tilde{u}_{s}^{n}(x)-\tilde{u}_{s}^{n}(y)\right)\left(\tilde{v}_{s}^{n}(x)-\tilde{v}_{s}^{n}(y)\right)\right|,\\
J_{2}^{n}(s) & = & \left|\sum_{x,y\in\Lambda_{n}}p_{xy}^{n}\xi_{\beta_{n}(T)-s}^{n}(x)\left(\left(\tilde{u}_{s}^{n}(x)-\tilde{u}_{s}^{n}(y)\right)^{2}-\left(\tilde{v}_{s}^{n}(x)-\tilde{v}_{s}^{n}(y)\right)^{2}\right)\right|.
\end{eqnarray*}
Let's bound the expected value of $J_{1}^{n}$:
\begin{eqnarray*}
\mathbb{E}(J_{1}^{n}(s)) & = & \mathbb{E}\left|\sum_{x,y\in\Lambda_{n}}p_{xy}^{n}\xi_{\beta_{n}(T)-s}^{n}(x)\left(\tilde{u}_{s}^{n}(x)-\tilde{u}_{s}^{n}(y)\right)\left(\tilde{v}_{s}^{n}(x)-\tilde{v}_{s}^{n}(y)\right)\right|\\
 & \leq & \sqrt{\mathbb{E}\left[\left(\sum_{x,y\in\Lambda_{n}}p_{xy}^{n}\xi_{\beta_{n}(T)-s}^{n}(x)\left(\tilde{u}_{s}^{n}(x)-\tilde{u}_{s}^{n}(y)\right)\left(\tilde{v}_{s}^{n}(x)-\tilde{v}_{s}^{n}(y)\right)\right)^{2}\right]}.
\end{eqnarray*}
Now we will recall the following representation from Theorem 2.2 in
\cite{DawsonPerkins98} with $\varphi=\delta_{x}$: 
\[
\begin{cases}
\tilde{u}_{t}^{n}(x)=P_{t}^{n}\tilde{u}_{0}^{n}(x)+\sum_{z\in\Lambda_{n}}\intop_{0}^{t}p_{t-s}^{n}(x-z)\sqrt{\tilde{\gamma}\tilde{u}_{s}^{n}(z)\tilde{v}_{s}^{n}(z)}dB_{s}(z), & x\in\Lambda_{n},\\
\tilde{v}_{t}^{n}(x)=P_{t}^{n}\tilde{v}_{0}^{n}(x)+\sum_{z\in\Lambda_{n}}\intop_{0}^{t}p_{t-s}^{n}(x-z)\sqrt{\tilde{\gamma}\tilde{u}_{s}^{n}(z)\tilde{v}_{s}^{n}(z)}dW_{s}(z), & x\in\Lambda_{n},
\end{cases}
\]
to get
\begin{align}
N_{r}^{t}(x,y)  := & P_{t-r}^{n}\tilde{u}_{r}^{n}(x)-P_{t-r}^{n}\tilde{u}_{r}^{n}(y)\label{eq:u(x)-u(y)}\\
  = & P_{t}^{n}\tilde{u}_{0}^{n}(x)-P_{t}^{n}\tilde{u}_{0}^{n}(y)\nonumber \\
   & +\sum_{z\in\Lambda_{n}}\intop_{0}^{r}\left(p_{t-s}^{n}(x-z)-p_{t-s}^{n}(y-z)\right)\sqrt{\tilde{\gamma}\tilde{u}_{s}^{n}(z)\tilde{v}_{s}^{n}(z)}dB_{s}(z),\,\,\, r\leq t,\nonumber 
\end{align}
where the last equality follows from the Chapman-Kolmogorov formula.
Similarly, for $r\leq t$ we get
\begin{align}
M_{r}^{t}(x,y)  := & P_{t-r}^{n}\tilde{v}_{t}^{n}(x)-P_{t-r}^{n}\tilde{v}_{t}^{n}(y)\label{eq:v(x)-v(y)}\\
  = & P_{t}^{n}\tilde{v}_{0}^{n}(x)-P_{t}^{n}\tilde{v}_{0}^{n}(y)\nonumber \\
   & +\sum_{z\in\Lambda_{n}}\intop_{0}^{r}\left(p_{t-s}^{n}(x-z)-p_{t-s}^{n}(y-z)\right)\sqrt{\tilde{\gamma}\tilde{u}_{s}^{n}(z)\tilde{v}_{s}^{n}(z)}dW_{s}(z)\nonumber 
\end{align}
where $\left\{ B_{\cdot}(z)\right\} _{z\in\Lambda_{n}},\left\{ W_{\cdot}(z)\right\} _{z\in\Lambda_{n}}$
are orthogonal Brownian motions.

Note that $\left\{ N_{r}^{t}(x,y)\right\} _{0\leq r\leq t}$ and $\left\{ M_{r}^{t}(x,y)\right\} _{0\leq r\leq t}$
are martingales;
 in addition
\begin{equation}
\begin{array}{c}
\tilde{u}_{t}^{n}(x)-\tilde{u}_{t}^{n}(y)=\left.P_{t-r}^{n}\tilde{u}_{r}^{n}(x)-P_{t-r}^{n}\tilde{u}_{r}^{n}(y)\right|_{r=t}=\left.N_{r}^{t}(x,y)\right|_{r=t},\\
\tilde{v}_{t}^{n}(x)-\tilde{v}_{t}^{n}(y)=\left.P_{t-r}^{n}\tilde{v}_{r}^{n}(x)-P_{t-r}^{n}\tilde{v}_{r}^{n}(y)\right|_{r=t}=\left.M_{r}^{t}(x,y)\right|_{r=t}.
\end{array}\label{eq:u_t(x)-u_t(y)}
\end{equation}
 Then by orthogonality of the Brownian motions $B_{\cdot}(z)$ and
$W_{\cdot}(z)$ for all $\mbox{\ensuremath{z\in\Lambda_{n}}}$, and
the It$\mathrm{\hat{o}}$ formula we get
\begin{multline*}
\sum_{x,y\in\Lambda_{n}}p_{x,y}^{n}\xi_{\beta_{n}(T)-s}^{n}M_{s}^{s}(x,y)N_{s}^{s}(x,y)\\
\begin{aligned}= & \sum_{x,y\in\Lambda_{n}}p_{xy}^{n}\xi_{\beta_{n}(T)-s}^{n}(x)\left(\tilde{u}_{s}^{n}(x)-\tilde{u}_{s}^{n}(y)\right)\left(\tilde{v}_{s}^{n}(x)-\tilde{v}_{s}^{n}(y)\right)\\
= & \sum_{x,y\in\Lambda_{n}}p_{xy}^{n}\xi_{\beta_{n}(T)-s}^{n}(x)\sum_{z\in\Lambda_{n}}\intop_{0}^{s}\left(P_{s-r}^{n}\tilde{u}_{r}^{n}(x)-P_{s-r}^{n}\tilde{u}_{r}^{n}(y)\right)\\
 & \times\left(p_{s-r}^{n}(x-z)-p_{s-r}^{n}(y-z)\right)\sqrt{\tilde\gamma\tilde{u}_{r}^{n}(z)\tilde{v}_{r}^{n}(z)}dW_{r}(z)\\
 & +\sum_{x,y\in\Lambda_{n}}p_{xy}^{n}\xi_{\beta_{n}(T)-s}^{n}(x)\sum_{z\in\Lambda_{n}}\intop_{0}^{s}\left(P_{s-r}^{n}\tilde{v}_{r}^{n}(x)-P_{s-r}^{n}\tilde{v}_{r}^{n}(y)\right)\\
 & \times\left(p_{s-r}^{n}(x-z)-p_{s-r}^{n}(y-z)\right)\sqrt{\tilde\gamma\tilde{u}_{r}^{n}(z)\tilde{v}_{r}^{n}(z)}dB_{r}(z)\\
=: & \sum_{x,y\in\Lambda_{n}}p_{xy}^{n}\xi_{\beta_{n}(T)-s}^{n}(x)\sum_{z\in\Lambda_{n}}\tilde{I}_{1,1}^{n}(s,x,y,z)\\
 & +\sum_{x,y\in\Lambda_{n}}p_{xy}^{n}\xi_{\beta_{n}(T)-s}^{n}(x)\sum_{z\in\Lambda_{n}}\tilde{I}_{1,2}^{n}(s,x,y,z)\\
=:&I_{1,1}^{n}(s)+I_{1,2}^{n}(s).
\end{aligned}
\end{multline*}
Note that
\begin{equation}
\mathbb{E}\left(J_{1}^{n}(s)\right)\leq C\sqrt{\left(\mathbb{E}\left[I_{1,1}^{n}(s)\right)^{2}\right]+\mathbb{E}\left[\left(I_{1,2}^{n}(s)\right)^{2}\right]}.\label{eq:EJ1<=00003D}
\end{equation}
So let us bound $\mathbb{E}\left[\left(I_{1,1}^{n}(s)\right)^{2}\right]$: for all $ s\leq\beta_{n}(T)$, we have 
\begin{align}
\mathbb{E}\left[\left(I_{1,1}^{n}(s)\right)^{2}\right]  = & \sum_{x_{1},y_{1}\in \Lambda_n}\sum_{x_{2},y_{2}\in \Lambda_n}\mathbb{E}\left(\xi_{\beta_{n}(T)-s}^{n}(x_{1})\xi_{\beta_{n}(T)-s}^{n}(x_{2})\right)\label{eq:E(I^n_1,1)^2}\\
\nonumber
 &   \times p_{x_{1},y_{1}}^{n}p_{x_{2},y_{2}}^{n}\sum_{z_{1}\in\Lambda_{n}}\sum_{z_{2}\in\Lambda_{n}}\mathbb{E}\left[\tilde{I}_{1,1}^{n}(s,x_{1},y_{1},z_{1})\tilde{I}_{1,1}^{n}(s,x_{2},y_{2},z_{2})\right].\nonumber
\end{align}
 Note that for $z_{1}\neq z_{2}$, $\tilde{I}_{1,1}^{n}(r,x_{1},y_{1},z_{1})$
and $\tilde{I}_{1,1}^{n}(r,x_{2},y_{2},z_{2})$ are orthogonal square
integrable martingales for $r\leq s$, and hence 
\begin{multline*}
\sum_{z_{1}\in\Lambda_{n}}\sum_{z_{2}\in\Lambda_{n}}\mathbb{E}\left[\tilde{I}_{1,1}^{n}(s,x_{1},y_{1},z_{1})\tilde{I}_{1,1}^{n}(s,x_{2},y_{2},z_{2})\right]\\
\begin{aligned}\,\,\,\,\,\,\,\,\,\,\,\,= & \sum_{z\in\Lambda_{n}}\mathbb{E}\left[\tilde{I}_{1,1}^{n}(s,x_{1},y_{1},z)\tilde{I}_{1,1}^{n}(s,x_{2},y_{2},z)\right]\\
\,\,\,\,\,\,\,\,\,\,\,\,= & \tilde\gamma\sum_{z\in\Lambda_{n}}\mathbb{E}\left[\intop_{0}^{s}\left(P_{s-r}^{n}\tilde{u}_{r}^{n}(x_{1})-P_{s-r}^{n}\tilde{u}_{r}^{n}(y_{1})\right)\left(p_{s-r}^{n}(x_{1}-z)-p_{s-r}^{n}(y_{1}-z)\right)\right.\\
 & \left.\times\left(P_{s-r}^{n}\tilde{u}_{r}^{n}(x_{2})-P_{s-r}^{n}\tilde{u}_{r}^{n}(y_{2})\right)\left(p_{s-r}^{n}(x_{2}-z)-p_{s-r}^{n}(y_{2}-z)\right)\tilde{u}_{r}^{n}(z)\tilde{v}_{r}^{n}(z)dr\right]\\
\,\,\,\,\,\,\,\,\,\,\,\,= & \tilde\gamma\sum_{z\in\Lambda_{n}}\mathbb{E}\left[\intop_{0}^{s}\sum_{z_{1}\in\Lambda_{n}}\left(p_{s-r}^{n}(x_{1}-z_{1})-p_{s-r}^{n}(y_{1}-z_{1})\right)\tilde{u}_{r}^{n}(z_{1})\right.\\
 & \times\sum_{z_{2}\in\Lambda_{n}}\left(p_{s-r}^{n}(x_{2}-z_{2})-p_{s-r}^{n}(y_{2}-z_{2})\right)\tilde{u}_{r}^{n}(z_{2})\\
 & \times\left(p_{s-r}^{n}(x_{1}-z)-p_{s-r}^{n}(y_{1}-z)\right)\left(p_{s-r}^{n}(x_{2}-z)-p_{s-r}^{n}(y_{2}-z)\right)\\
 & \times\left.\tilde{u}_{r}^{n}(z)\tilde{v}_{r}^{n}(z)dr\right]\\
\,\,\,\,\,\,\,\,\,\,\,\, \leq &\tilde\gamma\sum_{z\in \Lambda_n}\intop_{0}^{s}\sum_{z_{1}\in \Lambda_n}\sum_{z_{2}\in \Lambda_n} \hat J_{1,1}(\vec{x},\vec{y},\vec{z},s-r)
\mathbb{E}\left[\tilde{u}_{r}^{n}(z_{1})\tilde{u}_{r}^{n}(z_{2})\tilde{u}_{r}^{n}(z)\tilde{v}_{r}^{n}(z)\right]dr.
\end{aligned}
\end{multline*}
where  $\vec{x}=(x_{1},x_{2}),\vec{y}=(y_{1},y_{2}),\vec{z}=(z_{1},z_{2},z)$ and 
$$
\hat J_{1,1}(\vec{x},\vec{y},\vec{z},s-r)=\Pi_{i=1,2}\left|p_{s-r}^{n}(x_{i}-z_{i})-p_{s-r}^{n}(y_{i}-z_{i})\right|\left|p_{s-r}^{n}(x_{i}-z)-p_{s-r}^{n}(y_{i}-z)\right|.
$$

By Lemma~\ref{cor:E(u^4)<infty} and assumption on the initial
conditions of $(\tilde{u},\tilde{v})$, 
\[
\mathbb{E}\left[\tilde{u}_{r}^{n}(z_{1})\tilde{u}_{r}^{n}(z_{2})\tilde{u}_{r}^{n}(z)\tilde{v}_{r}^{n}(z)\right]
\]
is bounded by $C\left|\Lambda_{n}\right|^{-4}$ uniformly on $z,z_{1},z_{2}\in\Lambda_{n}$,
$r\leq\beta_{n}(T)$ and $n\geq 1$. Therefore,
\begin{multline}
\begin{aligned}
\sum_{z_{1}\in\Lambda_{n}}&\sum_{z_{2}\in\Lambda_{n}}\mathbb{E}\left[\tilde{I}_{1,1}^{n}(s,x_{1},y_{1},z_{1})\tilde{I}_{1,1}^{n}(s,x_{2},y_{2},z_{2})\right]\\
&\leq 
 C\left|\Lambda_{n}\right|^{-4}\sum_{z\in \Lambda_n}\intop_{0}^{s}\sum_{z_{1}\in \Lambda_n}\sum_{z_{2}\in \Lambda_n} \hat J_{1,1}(\vec{x},\vec{y},\vec{z},s-r)dr.
\end{aligned}
\label{eq:sum-sum-*}
\end{multline}
Denote 
\begin{eqnarray*}
\tilde{J}_{1,1}(\vec{x},\vec{y},s-r) & := & \sum_{z\in\Lambda_{n}}\sum_{z_{1}\in\Lambda_{n}}\sum_{z_{2}\in\Lambda_{n}}
 \hat J_{1,1}(\vec{x},\vec{y},\vec{z},s-r).
\end{eqnarray*}
Now we decompose the term on the right-hand side of \eqref{eq:sum-sum-*}
into two terms
\begin{equation}
C\left|\Lambda_{n}\right|^{-4}\intop_{0}^{\left(s-n^{\delta}\right)_{+}}\tilde{J}_{1,1}(\vec{x},\vec{y},s-r)dr+C\left|\Lambda_{n}\right|^{-4}\intop_{\left(s-n^{\delta}\right)_{+}}^{s}\tilde{J}_{1,1}(\vec{x},\vec{y},s-r)dr\label{eq:J11-**}
\end{equation}
 for some $\delta\in(2,d)$.

By Lemma~\ref{lem:help-lemma}(a) 
 we get
\[
\lim_{n\rightarrow\infty}\sup_{t>n^{\delta}}\sup_{z_{1},z_{2}\in\Lambda_{n}}\left|\Lambda_{n}\right|\left|p_{t}^{n}(z_{1},z_{2})-(2n)^{-d}\right|=0,
\]
for any $\delta>2$. This implies that, for any $\delta>2$, there
exists a sequence $a_{n}=a_{n}(\delta)$, such that
\begin{equation}
\sup_{s\geq n^{\delta}}\sup_{w_{1},w_{2},w_{3}\in\Lambda_{n}}\left|p_{s}^{n}(w_{1},w_{2})-p_{s}^{n}(w_{3},w_{2})\right|
\leq\frac{a_{n}}{\left|\Lambda_{n}\right|},
\label{eq:bound-J_1^n-1}
\end{equation}
where $a_{n}\rightarrow0$, as $n\rightarrow\infty$.

By \eqref{eq:bound-J_1^n-1} we immediately get
\begin{equation}
\tilde{J}_{1,1}(\vec{x},\vec{y},s-r)\leq\left|\Lambda_{n}\right|^{3}a_{n}^{4}\left|\Lambda_{n}\right|^{-4}=\left|\Lambda_{n}\right|^{-1}a_{n}^{4},\label{eq:J11-bound}
\end{equation}
for $s>n^{\delta}$ and $r\leq s-n^{\delta}$. Hence for $s\leq \beta_n(T)$,  we get
\begin{align}
\left|\Lambda_{n}\right|^{-4}\intop_{0}^{(s-n^{\delta})_{+}}\tilde{J}_{1,1}(\vec{x},\vec{y},s-r)dr & \leq\left|\Lambda_{n}\right|^{-4} a_{n}^{4}\left|\Lambda_{n}\right|^{-1}\intop_{0}^{(s-n^{\delta})_{+}}1dr 
\leq C\left|\Lambda_{n}\right|^{-4}a_{n}^{4},\label{eq:3tri}
\end{align}
where the last inequality follows since $s\leq \beta_n(T)=\left|\Lambda_{n}\right|T$. 

Let us treat the second term in \eqref{eq:J11-**}. Note that 
\[
\sum_{z_{i}\in\Lambda_{n}}\left|p_{s}^{n}(x_{i}-z_{i})-p_{s}^{n}(y_{i}-z_{i})\right|\leq\sum_{z_{i}\in\Lambda_{n}}p_{s}^{n}(x_{i}-z_{i})+p_{s}^{n}(y_{i}-z_{i})\leq2, \forall i=1,2, s\geq 0.
\]
Also
\begin{multline*}
\sum_{z\in\Lambda_{n}}\left|p_{s-r}^{n}(x_{1}-z)-p_{s-r}^{n}(y_{1}-z)\right|\left|p_{s-r}^{n}(x_{2}-z)-p_{s-r}^{n}(y_{2}-z)\right|\\
\begin{aligned}\,\,\,\,\,\, & \leq\sum_{z\in\Lambda_{n}}  \left(p_{s-r}^{n}(x_{1}-z)+p_{s-r}^{n}(y_{1}-z)\right)\left(p_{s-r}^{n}(x_{2}-z)+p_{s-r}^{n}(y_{2}-z)\right)\\
 & =p_{2(s-r)}^{n}(x_{1}-x_{2})+p_{2(s-r)}^{n}(y_{1}-y_{2})+p_{2(s-r)}^{n}(x_{1}-y_{2})+p_{2(s-r)}^{n}(y_{1}-x_{2}),
\end{aligned}
\end{multline*}
where the last equality follows from the Chapman-Kolmogorov formula.
Then
\begin{multline}
C\left|\Lambda_{n}\right|^{-4}\intop_{(s-n^{\delta})_+}^{s}\tilde{J}_{1,1}(\vec{x},\vec{y},s-\
r)dr\\
\,\,\,\,\leq 	
 C\left|\Lambda_{n}\right|^{-4}\intop_{0}^{n^{\delta}}\left(p_{2r}^{n}(x_{1}-x_{2})+p_{2r}^{n}(y_{1}-y_{2})
+p_{2r}^{n}(x_{1}-y_{2})+p_{2r}^{n}(y_{1}-x_{2})\right)dr. 
\label{eq:4tri}
\end{multline}
By \eqref{eq:sum-sum-*}, \eqref{eq:J11-**}, \eqref{eq:3tri} and
\eqref{eq:4tri} we get
\begin{multline*}
\sum_{z_{1}\in\Lambda_{n}}\sum_{z_{2}\in\Lambda_{n}}\mathbb{E}\left[\tilde{I}_{1,1}^{n}(s,x_{1},y_{1},z_{1})\tilde{I}_{1,1}^{n}(s,x_{2},y_{2},z_{2})\right]\\
\begin{aligned}\,\,\,\,\,\leq & C\left|\Lambda_{n}\right|^{-4}\left(a_{n}^{4}+\intop_{0}^{n^{\delta}}\left(p_{2r}^{n}(x_{1}-x_{2})+p_{2r}^{n}(y_{1}-y_{2})
+p_{2r}^{n}(x_{1}-y_{2})+p_{2r}^{n}(y_{1}-x_{2})\right)dr\right).
\end{aligned}
\end{multline*}
Use the above inequality, \eqref{eq:E(I^n_1,1)^2} and also Corollary
\ref{cor:Finite second moments} and Lemma \ref{lem:simple-comp}
to get
\begin{align}
\mathbb{E}\left[\left(I_{1,1}^{n}(s)\right)^{2}\right] & \leq  C\left(\sum_{x_{1},y_{1}\in \Lambda_n}\sum_{x_{2},y_{2}\in \Lambda_n}p_{x_{1},y_{1}}^{n}p_{x_{2},y_{2}}^{n}\left|\Lambda_{n}\right|^{-4}a_{n}^{4}+\left|\Lambda_{n}\right|^{-4}n^{\delta}\left|\Lambda_{n}\right|\right)\nonumber \\
 & \leq  C\left(\left|\Lambda_{n}\right|^{-2}a_{n}^{4}+\left|\Lambda_{n}\right|^{-3}n^{\delta}\right).\label{eq:E(I^n_11)-bound}
\end{align}
In the same way we handle $I_{1,2}^{n}(s)$ and get
\begin{align}
\mathbb{E}\left[\left(I_{1,2}^{n}(s)\right)^{2}\right] & \leq  C\left(\sum_{x_{1},y_{1}\in \Lambda_n}\sum_{x_{2},y_{2}\in \Lambda_n}p_{x_{1},y_{1}}^{n}p_{x_{2},y_{2}}^{n}\left|\Lambda_{n}\right|^{-4}a_{n}^{4}+\left|\Lambda_{n}\right|^{-4}n^{\delta}\left|\Lambda_{n}\right|\right)\nonumber \\
 & \leq  C\left(\left|\Lambda_{n}\right|^{-2}a_{n}^{4}+\left|\Lambda_{n}\right|^{-3}n^{\delta}\right).\label{eq:E(I^n_12)-bound}
\end{align}
By \eqref{eq:E(I^n_11)-bound}, \eqref{eq:E(I^n_12)-bound} and \eqref{eq:EJ1<=00003D},
we have
\begin{eqnarray*}
\mathbb{E}\left[J_{1}^{n}(s)\right] & \leq & \sqrt{\mathbb{E}\left[\left(I_{1,1}^{n}(s)\right)^{2}+\left(I_{1,2}^{n}(s)\right)^{2}\right]}\\
 & \leq & C\sqrt{\left|\Lambda_{n}\right|^{-2}a_{n}^{4}+\left|\Lambda_{n}\right|^{-3}n^{\delta}}\\
 & \leq & C\left|\Lambda_{n}\right|^{-1}a_{n}^{2}+C\left|\Lambda_{n}\right|^{-3/2}n^{\delta/2}.
\end{eqnarray*}
Thus
\begin{align}
\intop_{0}^{\beta_{n}(T)}\mathbb{E}\left(J_{1}^{n}(s)\right)ds & \leq  C\intop_{0}^{\beta_{n}(T)}\left(\left|\Lambda_{n}\right|^{-1}a_{n}^{2}+\left|\Lambda_{n}\right|^{-3/2}n^{\delta/2}\right)ds\label{eq:int-J1}\\
 & \leq  C\left|\Lambda_{n}\right|\left(\left|\Lambda_{n}\right|^{-1}a_{n}^{2}+\left|\Lambda_{n}\right|^{-3/2}n^{\delta/2}\right)\nonumber \\
 & \leq  Ca_{n}^{2}+C\left(\frac{n^{\delta}}{\left|\Lambda_{n}\right|}\right)^{1/2}\rightarrow0,\,\,\,\,\mathrm{as}\,\, n\rightarrow\infty,\nonumber 
\end{align}
where the last convergence holds since $\delta<d$ and $\left|\Lambda_{n}\right|=(2n+1)^{d}$.

Now we are ready to treat $J_{2}^{n}(s)$ in a similar way. 

By the It$\hat{\mathrm{o}}$ formula,
\[
\left(M_{r}^{s}(x,y)\right)^{2}=\intop_{0}^{s}M_{r}^{s}(x,y)dM_{r}^{s}(x,y)+\left\langle M_{\cdot}^{s}(x,y)\right\rangle _{s}
\]
and
\[
\left(N_{r}^{s}(x,y)\right)^{2}=\intop_{0}^{s}N_{r}^{s}(x,y)dN_{r}^{s}(x,y)+\left\langle N_{\cdot}^{s}(x,y)\right\rangle _{s}.
\]
Note that $\left\langle M_{\cdot}^{t}(x,y)\right\rangle _{t}=\left\langle N_{\cdot}^{t}(x,y)\right\rangle _{t}$,
and recall \eqref{eq:u_t(x)-u_t(y)}; therefore
\[
J_{2}^{n}(s)=\sum_{x,y\in\Lambda_{n}}\frac{1}{2}p_{x,y}^{n}\xi_{\beta_{n}(T)-s}^{n}\left[\intop_{0}^{s}M_{r}^{s}(x,y)dM_{r}^{s}(x,y)-\intop_{0}^{s}N_{r}^{s}(x,y)dN_{r}^{s}(x,y)\right].
\]
If we follow the steps of computations for $J_{1}^{n}(s)$, we get
that $$\lim_{n\rightarrow\infty} \mathbb{E}\left( \intop_{0}^{\beta_{n}(T)} J_{2}^{n}(s)\,ds\right)=0.$$

Use this, \eqref{eq:int-J1} and \eqref{eq:J1+J2} to finish the proof.

\end{proof}

{\bf Proof of Proposition \ref{lem:residue}.}\,\, Proposition \ref{lem:residue} follows immediately from~\eqref{eq:e(T,n)=00003De_RW+e_br}, and  Lemmas~ \ref{lem:branching-part-vanishes}, \ref{eq:err_rw}. 
\qed


\bibliographystyle{plain}
\phantomsection\addcontentsline{toc}{section}{\bibname}



\end{document}